\newtheorem{theorem}{Theorem}[]
\newtheorem{lemma}[theorem]{Lemma}
\newtheorem{proposition}[theorem]{Proposition}
\newtheorem{corollary}[theorem]{Corollary}
\newtheorem{remark}[theorem]{Remark}
\newtheorem{problem}{Problem}[section]
\def \Cm {\mathbb{C}}
\def \Nm {\mathbb{N}}
\def \Rm {\mathbb{R}}
\def \Sm {\mathbb{S}}
\def \Zm {\mathbb{Z}}
\def\F{\mathcal{F}}
\def\SS{\mathcal{S}}
\def\R{\mathcal{R}}
\newcommand{\cout}[1]{}
\newcommand{\x}{\mathrm{x}}
\newcommand{\sgn}[1]{\,{\rm sign}(#1)}
\newcommand{\dbar}{\overline{\partial}}
\newcommand{\zbar}{\overline{z}}
\newcommand{\adj}{I_0^\sharp}
\newcommand{\adjmu}{I_0^*}
\newcommand{\Dm}{ \mathbb{D} }
\renewcommand{\ss}{ \mathfrak{s} }
\renewcommand{\F}[1]{{ #1}} 
\newcommand{\wtH}{\widetilde{H}}
\newcommand{\dvolk}{dVol_\kappa}
\newcommand{\DD}{ {\mathcal L}}
\title{Functional relations, sharp mapping properties and regularization of the X-ray transform on disks of constant curvature}
\author{Fran\c{c}ois Monard\thanks{Department of Mathematics, University of California, Santa Cruz CA 95064; email: fmonard@ucsc.edu.}}
\begin{document}

\maketitle

\begin{abstract} On simple geodesic disks of constant curvature, we derive new functional relations for the geodesic X-ray transform, involving a certain class of elliptic differential operators whose ellipticity degenerates normally at the boundary. We then use these relations to derive sharp mapping properties for the X-ray transform and its corresponding normal operator. Finally, we discuss the possibility of theoretically rigorous regularized inversions for the X-ray transform when defined on such manifolds.    
\end{abstract}


\section{Introduction}

Consider $(M,g)$ a \F{closed,} simple\footnote{\F{A Riemannian surface is called {\em simple} if it is non-trapping (no infinite-length geodesic), has strictly convex boundary (in the sense of the second fundamental form) and has no conjugate points, see e.g. \cite{Ilmavirta2019}.}} Riemannian surface with boundary with unit tangent bundle $SM$ and inward-pointing boundary 
\begin{align*}
    \partial_+ SM = \{ (\x,v)\in SM,\ \x\in \partial M,\ \mu(\x,v) := g_\x (v,\nu_\x) >0\}, 
\end{align*}
with $\nu_\x$ the inward-pointing unit normal. We equip $M$ with its Riemannian area $dVol_g$ and $\partial_+ SM$ with the area form $d\Sigma^2$, product of the arclength measure on $\partial M$ and the Euclidean measure on the tangent circles. Our object of study is the geodesic X-ray transform $I_0\colon L^2(M) \to L^2(\partial_+ SM, d\Sigma^2)$, defined by 
\begin{align}
    I_0 f(\x,v) := \int_0^{\tau(\x,v)} f(\gamma_{\x,v}(t))\ dt, \qquad (\x,v)\in \partial_+ SM,
    \label{eq:Xray}
\end{align}
where $\gamma_{\x,v}(t)$ denotes the unit-speed geodesic with initial condition $(\gamma(0), \dot \gamma(0)) = (\x,v)$, whose first exit time out of $M$ is $\tau(\x,v)$. Denote $\adjmu$ the adjoint of $I_0$ in this setting. The function $I_0 f$ is really a function on the space of geodesics represented by $\partial_+ SM$ if $M$ is convex, non-trapping. \F{This parameterization is referred to as 'fan-beam' coordinates in the Euclidean literature applied to X-ray CT \cite{Natterer2001}.} Generally on simple surfaces, such a transform is known to be injective \cite{Mukhometov1975}, and Fredholm type inversion formulas were derived in \cite{Pestov2004,Krishnan2010} and implemented in \cite{Monard2013}, with a compact error term shown to vanish in cases with constant curvature, see also the recent topical review \cite{Ilmavirta2019}. \F{The X-ray transform can also be explicitly inverted in certain non-compact, symmetric spaces, see e.g. \cite{Helgason2010,Natterer2001,Bal2005}. The present article focuses on the compact case, where every geodesic reaches the boundary in finite time, addressing sharp mapping properties for the geodesic X-ray transform, that take into account boundary behavior. The results currently hold in the case where $M$ is a geodesic disk of constant curvature, that is to say, obtained as the image of a centered ball (on the tangent space of some point on a surface of constant curvature) by the Riemannian exponential map.}

The first issue to be discussed is the choice of co-domain topology for $I_0$. Indeed, the transform \eqref{eq:Xray} is often studied in the functional setting $L^2(M)\to L^2(\partial_+ SM, \mu d\Sigma^2)$ for which the adjoint will be denoted by $\adj$. In that case, the operator $\adj I_0$ is a classical $\Psi DO$ which can be naturally extended to a simple open neighborhood of $M$ and satisfies a $\frac{-1}{2}$-transmission condition at $\partial M$. In particular, the Boutet de Monvel calculus and its generalization have been used in \cite{Monard2017} to obtain mapping properties of $\adj I_0$. In the present case, where $\adjmu I_0 = \adj \frac{1}{\mu} I_0$, the operator so obtained no longer extends, and thus transmission conditions are not available. On the other hand, $L^2(\partial_+ SM, d\Sigma^2)$ is precisely the co-domain topology where Singular Value Decompositions for \eqref{eq:Xray} are known in some cases \cite{Maass1992,Mishra2019}.  

\paragraph{New functional relations.} The first salient feature of this article is the derivation of new functional relations between the normal operator $I_0^* I_0$ and a distinguished second-order differential operator. In non-compact spaces, the relation $R^t R = (-\Delta)^{-1/2}$ can be derived for the Radon transform $R$ on the Euclidean plane ($R^t$ denotes the transpose). More generally, examples of Radon transforms on two-point homogeneous spaces abound, where the corresponding normal operator can be inverted using some differential operator \cite{Helgason2010}; see also \cite{Guillarmou2017}, where an explicit relation between $I_0^* I_0$ and the Laplace-Beltrami operator on hyperbolic surfaces of constant curvature was derived. 

When restricting these transforms to compact domains, using global functional relations may not translate into relations on compact domains, see Remark \ref{rmk:different}. Thus this article presents a link between X-ray transforms on simple surfaces and second-order elliptic operators whose ellipticity degenerates non-tangentially at a specific order at the boundary. Such operators also appear under the name of Kimura type operators studied in the works \cite{Epstein2013} for their applications to population genetics, and analyzed through the lens of the calculus of uniformly degenerate (or 0-) operators \cite{Mazzeo1991}. In the case of the \F{closed} Euclidean unit disk \F{$\Dm = \{(x,y)\in \Rm^2,\ x^2+y^2\le 1\}$}, this relation becomes
\begin{align}
    \DD (I_0^* I_0)^2 = (I_0^* I_0)^2 \DD = 4\pi\ \text{id}|_{C^\infty (\Dm)},
    \label{eq:sqrt}
\end{align}
where $\DD$ is given by, in polar coordinates $(\rho,\omega)$
\begin{align}
    \DD = - \left( (1-\rho^2) \partial_\rho^2 + \left( \frac{1}{\rho} - 3\rho \right) \partial_\rho + \frac{1}{\rho^2} \partial_\omega^2 \right) + 1.
    \label{eq:DD}
\end{align}
Although this specific case can be pieced together using existing literature on Zernike polynomials \cite{Zernike1934} and the X-ray transform \cite{Cormack1964,Kazantsev2004}, to the author's knowledge, such a result was not \F{explicitly} stated in the literature. Moreover, we show in this article that this link between X-ray transforms on simple surfaces and degenerate elliptic operators persists when $M$ is a geodesic disk of arbitrary radius in constant curvature spaces, see in particular Theorem \ref{thm:main1}. In fact, more is at play: there exists a smooth, non-vanishing weight $w$ such that the operator $I_0 w := I_0 (w\cdot)$ intertwines an operator similar to $\DD$ above, with an operator $-T^2$, where $T$ is some vector field on $\partial_+ SM$, i.e. 
\begin{align}
    I_0 w \circ \DD = (-T^2) \circ I_0 w.
    \label{eq:egorov}
\end{align}
On the Euclidean unit disk, $w\equiv 1$ and $T = \partial_\beta - \partial_\alpha$ in fan-beam coordinates. Similar intertwining properties have also been very useful to the analysis of generalized Radon transforms on symmetric spaces \cite{Gonzalez2006,Helgason2010,Kakehi1999} and one-dimensional convolution problems \cite{Grunbaum1983,Maass1991}, some of which arise naturally from integral geometric problems. 

\paragraph{Mapping properties.} Identities \eqref{eq:sqrt} and \eqref{eq:egorov} bring us to the second topic of interest of this article, namely the mapping properties of $I_0$ and $\adjmu I_0$. Recently, range characterization and obtaining sharp mapping properties for X-ray transforms have regained interest \cite{Kumar2010,Rubin2013,Sharafutdinov2016,Assylbekov2018,Boman2018,Mishra2019}, with the challenges of accurately taking boundary behavior into account, and finding spaces on $\partial_+ SM$ which only require regularity along some but not all directions \F{(unlike usual Sobolev regularity which requires controlling derivatives along all directions)}. This is because, as recently pointed in \cite{Assylbekov2018}, \F{although $\partial_+ SM$ is $(2d-2)$-dimensional when $M$ has dimension $d$, only $d-1$ vector fields on $\partial_+ SM$} are needed to be fully elliptic on the image of the canonical relation of $I_0$ when viewed as an FIO. The typical example of this is in parallel Euclidean geometry, where regularity with respect to $\frac{d}{ds}$ is sufficient \cite{Natterer2001}. Recently in \cite{Assylbekov2018}, a construction of Sobolev spaces based on \F{extending} $M$ and encoding smoothness with respect to a reduced number of vector fields was indeed possible in order to capture the smoothing properties of the X-ray transform. In the recent work \cite{Paternain2019}, other spaces involving regularity with respect to tangential-horizontal directions on $\partial_+ SM$ are defined, allowing the authors to formulate sharp $L^2-H^{1/2}$ stability estimates on manifolds of non-positive curvature for the X-ray transform defined on tensor fields. 

To approach this question here, the functional relations \eqref{eq:sqrt} and \eqref{eq:egorov} suggest two things: relation \eqref{eq:sqrt} suggest that the mapping properties of $\adjmu I_0$ are best described on a Sobolev scale where smoothness is encoded with respect to $\DD$; relation \eqref{eq:egorov} suggests that on the side of $\partial_+ SM$, smoothness w.r.t. $\DD$ will be translated into smoothness w.r.t. $T$. Such statements are made precise in Section \ref{sec:mapping}, where appropriate Sobolev scales are introduced, and where sharp mapping properties for $\adjmu I_0$ and $I_0$ are formulated for any order order on that scale, see Corollary \ref{cor:1} and Theorem \ref{thm:main2}. The Hilbert scale introduced on $\Dm$, denoted $\wtH^s(\Dm)$ below, has both a definition in terms of powers of $\DD$, and in terms of decay rate of Zernike \F{polynomial} expansions. A similar though inequivalent scale using different weighting was also defined in \cite[Eq. (2.9)]{Johnstone1990} to describe {\it ad hoc} smoothness classes there.

Strikingly, while $\adj I_0 (L^2(M)) \subset H^1(M)$ as proved in \cite{Monard2017}, we now have $\adjmu I_0 (L^2(M)) \supsetneq H^1(M)$. For higher-order Sobolev spaces, the mapping properties of $\adj I_0$ require H\"ormander type transmission spaces which require microlocal tools in order to be defined, whereas the present definitions are rather transparent. 

\paragraph{Regularization.} As a consequence of the previous derivations, we finally discuss a new approach to regularization of geodesic X-ray transforms. As the transform \eqref{eq:Xray} is smoothing of order $\frac{1}{2}$, its stable inversion requires regularization, a theory only rigorously developed in the Euclidean case, and in parallel geometry. There, with the help of the Fourier Slice Theorem, one may derive filtered-backprojection type formulas \cite[Theorem 1.3]{Natterer2001} (with, e.g., filter $h$)
\begin{align}
    R^t h \star f = R^t (h\star Rf), 
    \label{eq:FBP}
\end{align}
where the left convolution is two-dimensional and the right one is one-dimensional. As $h$ is typically a smoothing kernel (in the Fourier domain, a low-frequency version of $|\sigma|$), these formulas give a theoretically exact estimation of how the reconstruction $f$ is smoothed out by the kernel $R^t h$, upon processing the data $Rf$ in a practically efficient way (the column-wise convolution by $h$ can be carried out by Fast Fourier Transform, and the backprojection $R^t$ is unavoidable). Unfortunately such formulas do not exist in a curved setting, let alone in fan-beam coordinates on the Euclidean disk. The last aim of this article is to present a new approach to tackle this issue, which is theoretically exact on the class of surfaces considered, see Section \ref{sec:regularization}. Implementation of such formulas will appear in future work.

\section{Main results}

\subsection{New functional relations}

We will work with simple geodesic disks in constant curvature spaces, modeled over the two-parameter family of domains $\Dm_R = \{(x,y)\in \Rm^2,\ x^2+y^2\le R^2\}$, endowed with the metric $g_{\kappa}(z) = (1+\kappa |z|^2)^{-2} |dz|^2$. Such models $(\Dm_R, g_\kappa)$ have constant curvature $4\kappa$ and are simple if and only if $R^2 |\kappa|<1$ (the case $R^2 \kappa = 1$ gives a hemisphere, of totally geodesic boundary; the case $R^2\kappa = -1$ gives a Poincar\'e hyperbolic model, non-compact). The first result of this article is as follows. 

\begin{theorem}\label{thm:main1}
    Let $(M,g)$ a simple geodesic disk of constant curvature, modeled on $(\Dm_R, g_\kappa)$ for some $(\kappa,R)$ satisfying $R^2 |\kappa|<1$, and consider the geodesic X-ray transform $I_0$ defined in \eqref{eq:Xray}, with adjoint $\adjmu$. Then there exists a second-order differential operator $\DD$ on $M$, a first-order differential operator $T$ on $\partial_+ SM$, and a non-vanishing weight function $w\in C^\infty(M)$ such that 
    \begin{align}
	\DD\circ\adjmu = \adjmu\circ (-T^2), \qquad I_0 w \circ \DD = (-T^2)\circ I_0 w. 
	\label{eq:rel1}
    \end{align}
    The operator $\DD$ is a degenerate elliptic differential operator of Kimura type, positive, coercive and formally self-adjoint on $L^2(M,w\ dVol_g)$. Moreover, the following relation holds
    \begin{align}
	\DD (\adjmu I_0 w)^2 = c^2_{\kappa,R}\ id|_{C^\infty(M)}, \qquad c_{\kappa,R} := \frac{4\pi R}{1-\kappa R^2}
	\label{eq:rel2}
    \end{align}
\end{theorem}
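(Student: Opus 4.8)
\emph{Sketch of the approach.} The plan is to build $\DD$, $T$ and $w$ out of a singular value decomposition of $I_0$ adapted to the rotational symmetry of $(\Dm_R,g_\kappa)$, and then to identify the constant $c_{\kappa,R}$ by bookkeeping of Jacobians.

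I would start from the Euclidean model $\kappa=0$, $R=1$, where $w\equiv 1$. There the Zernike polynomials $Z_n^m=R_n^{|m|}(\rho)e^{im\omega}$ ($n\ge|m|$, $n-|m|$ even) form an orthogonal basis of $L^2(\Dm)$, and the Zernike ordinary differential equation shows that they are exactly the eigenfunctions of the operator $\DD$ in \eqref{eq:DD}, with $\DD Z_n^m=(n+1)^2 Z_n^m$. Writing $\DD$ in divergence form $-\rho^{-1}\partial_\rho\big((1-\rho^2)\rho\,\partial_\rho\big)-\rho^{-2}\partial_\omega^2+1$ makes it manifestly formally self-adjoint on $L^2(\Dm,dVol_g)$, positive and coercive ($\DD\ge\mathrm{id}$), and degenerate elliptic with the coefficient $1-\rho^2$ of $\partial_\rho^2$ vanishing simply and normally at $\partial\Dm$, the model Kimura degeneration. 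On the other side, Cormack's formula \cite{Cormack1964,Kazantsev2004} gives $I_0 Z_n^m$ in fan-beam (boundary) coordinates $(\alpha,\beta)$ as a constant multiple of $\sin\big((n+1)\tfrac{\beta-\alpha}{2}\big)\,e^{im(\alpha+\beta)/2}$, from which one checks that $T=\partial_\beta-\partial_\alpha$ satisfies $-T^2\big(I_0 Z_n^m\big)=(n+1)^2\,I_0 Z_n^m$. Expanding in these bases, both identities of \eqref{eq:rel1} hold on each eigenfunction (on $\ker\adjmu$ both sides vanish), hence on $C^\infty$ by density; and since the self-adjoint operator $\adjmu I_0$ is then diagonal in the Zernike basis with eigenvalue $c/(n+1)$ for an explicit constant $c$ (read off from Cormack's formula), one obtains $\DD(\adjmu I_0)^2 Z_n^m=c^2 Z_n^m$, which is \eqref{eq:rel2}.

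For a general simple disk of constant curvature, modeled on $(\Dm_R,g_\kappa)$ with $R^2|\kappa|<1$, I would transfer this picture using Beltrami's theorem: a constant-curvature disk carries a \emph{projective} chart in which its geodesics become Euclidean chords. Concretely, there is a radial diffeomorphism $\psi_{\kappa,R}$ identifying $(\Dm_R,g_\kappa)$ (composed with a dilation) with a Euclidean disk in such a way that $g_\kappa$-geodesics correspond to straight chords, and the $g_\kappa$-arclength density along a geodesic is a function of the Euclidean base point times a factor constant along that chord. Unwinding \eqref{eq:Xray} through $\psi_{\kappa,R}$ then writes $I_0^{g_\kappa}=M_a\circ I_0^{\mathrm{euc}}\circ M_{w_0}\circ\psi_{\kappa,R}^{*}$, where $M_a$ is multiplication by a weight $a$ on the space of chords and $M_{w_0}$ by a radial weight $w_0$ on the disk. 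Choosing $w$ to absorb $w_0$, conjugating $\DD^{\mathrm{euc}}$ by $\psi_{\kappa,R}$ and the disk weights, and conjugating $T^{\mathrm{euc}}$ by the induced identification of geodesic spaces and by $M_a$, one produces a second-order operator $\DD$ on $M$ and a first-order operator $T$ on $\partial_+ SM$ for which \eqref{eq:rel1} holds by construction. From the explicit conjugating factors one reads off that $\DD$ is again degenerate elliptic of Kimura type in the sense of \cite{Epstein2013,Mazzeo1991}, with its ellipticity vanishing simply and non-tangentially at $\partial M$, positive and coercive, and formally self-adjoint precisely on $L^2(M,w\,dVol_g)$: conjugating the self-adjoint $\DD^{\mathrm{euc}}$ by a diffeomorphism and a multiplier produces an operator self-adjoint with respect to the transported measure, which a direct computation identifies with $w\,dVol_g$. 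Finally \eqref{eq:rel2} follows from the Euclidean identity together with the accumulated scaling factors, the net effect of the conformal change and dilation being to multiply the constant by $R/(1-\kappa R^2)$, so that $c_{\kappa,R}=4\pi R/(1-\kappa R^2)$; alternatively $c_{\kappa,R}$ can be extracted by evaluating $(\adjmu I_0 w)^2$ on the lowest eigenfunction of $\DD$.

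The main obstacles I anticipate are: (i) constructing the projective chart and verifying the ``constant-along-chords'' factorization of the arclength density --- this is exactly where constant curvature is essential (Beltrami's theorem), and the spherical and hyperbolic cases carry opposite signs; (ii) proving that the transported $\DD$ is genuinely of Kimura type and formally self-adjoint on $L^2(M,w\,dVol_g)$ with the correct order of boundary degeneracy, which requires careful control of the conjugating weights near $\partial M$, since a naive integration by parts leaves boundary terms that vanish only because of the degeneracy; and (iii) pinning down the single constant $c_{\kappa,R}$ --- the rigidity that one constant works for all angular modes simultaneously is special to constant-curvature disks and has to be checked rather than assumed. Upgrading the mode-by-mode identities to operator identities on $C^\infty(M)$ also requires a density argument in a topology in which both sides act continuously.
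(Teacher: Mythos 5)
Your proposal is correct in outline, and for the general constant--curvature case it is essentially the paper's argument: the paper's map $\Phi(z)=\frac{1-\kappa R^2}{1-\kappa|z|^2}\frac{z}{R}$ together with the angle reparameterization $\ss$ of \eqref{eq:skappa} is precisely the Beltrami-type projective identification you invoke, and the paper's Theorem \ref{thm:intertw2} is exactly your factorization $I_0^{g_\kappa}=M_a\circ I_0^{\mathrm{euc}}\circ M_{w_0}\circ\psi^*$ made explicit (with $w=\frac{1+\kappa|z|^2}{1-\kappa|z|^2}$ and the geodesic-space factor being $(\ss^{-1})^*\frac{1}{\sqrt{\ss'}}$ rather than a pure multiplier); $\DD$ and $T$ are then defined by conjugation as in \eqref{eq:LTkappa}, self-adjointness on $L^2(M,w\,dVol_g)$ follows from the change-of-volume identity, and the constant tracks through exactly as you predict. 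Where you genuinely diverge is the Euclidean base case: you take the SVD as input (Zernike ODE for the eigenfunctions of $\DD$, Cormack's formula for $I_0 Z_n^m$) and deduce the intertwining relations mode by mode, whereas the paper proves $\DD\circ\adjmu=\adjmu\circ(-T^2)$ \emph{directly} as a differential identity, by differentiating the backprojection integral $\adj g=\int_{\Sm^1}g(\omega+\beta_-(\rho,\theta),\alpha_-(\rho,\theta))\,d\theta$ using $\beta_-+\alpha_-+\pi=\theta$ and $\sin\alpha_-=-\rho\sin\theta$, and only afterwards derives the SVD as a corollary. The paper's order of logic buys two things your sketch has to supply separately: (a) the operator identity holds on all of $C^\infty$ with no density/convergence argument in an unbounded-operator topology (you correctly flag this as a gap to fill, and it is the main technical debt of your route --- you must also check the identity on the cokernel modes $\psi_{n,k}$, $k\notin[0,n]$, and on the odd part $L^2_-$, where both sides vanish); and (b) the constant $4\pi$ in \eqref{eq:rel2} requires the normalization $\|Z_{n,k}\|^2=\pi/(n+1)$, which ``read off from Cormack's formula'' underestimates --- the paper devotes a separate Beurling-transform argument to it, though your fallback of evaluating on the lowest eigenfunction would also work. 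Neither point is a fatal flaw; your route is the one the author explicitly acknowledges as available from the classical Zernike/Cormack literature.
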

In the statement above, by 'Kimura type' we mean that the operator $\DD$ is elliptic at interior points, and if $r$ is a boundary defining function for $\Dm_R$ and $\omega$ is the polar variable, the operator $\DD$ is of the form $A r \partial_r^2 + B \partial_\omega^2$ (with $A>0$ and $B>0$) up to lower-order terms near the boundary\footnote{$A,B$ can in general be functions of $\omega$, see \cite{Epstein2013} for general definitions.}. 

In \eqref{eq:rel2}, that the natural space is $C^\infty(M)$ is in fact proved slightly later, in Lemma \ref{lem:intersect} below. The operators $\DD$ and $T$ are defined in \eqref{eq:LTkappa}, in terms of the reference case \eqref{eq:DD} and \eqref{eq:Te}, and $(R,\kappa)$-dependent intertwining diffeomorphisms $\Phi$ defined in \eqref{eq:Psi} and $\ss$ defined in \eqref{eq:skappa}.

\begin{remark}\label{rmk:different}
    Note that \eqref{eq:rel2} is a genuinely different scenario even in the Euclidean case, which could not be obtained from using the classical relation $\R^t \R = (-\Delta)^{-1/2}$ and considering a restriction $r_M \R^t \R e_M$ where $e_M,r_M$ are operators of extension-by-zero and restriction. Indeed in the latter case, the following isomorphism property
    \begin{align*}
	r_M \R^t \R e_M \colon d_M^{-1/2} C^\infty(M) \leftrightarrow C^\infty(M),
    \end{align*}
    is a special case of \cite[Theorem 4.4]{Monard2017}, with $d_M$ a boundary defining function for $M$. Though the operator is $L^2(M)-L^2(M)$ self-adjoint, these smooth mapping properties make it difficult to envision a relation of the kind \eqref{eq:rel2}. 
\end{remark}

\subsection{Range characterization and mapping properties} \label{sec:mapping}

Relations \eqref{eq:rel1}-\eqref{eq:rel2} indicate that, upon defining appropriate Hilbert scales modeled after $\DD$ and $T$, one may formulate accurate mapping properties for $I_0$. Specifically, one may naturally define two Sobolev type of scales of spaces indexed over $s\in \Rm$. The first one on $M$ is given by
\begin{align}
    \wtH^s (M) := \left\{ f\in L^2(M, w\ dVol_g),\quad \DD^{s/2} f \in L^2(M, w\ dVol_g) \right\}, 
    \label{eq:wtH}
\end{align}
and an important property is the following:
\begin{lemma}\label{lem:intersect} Let $(M,g)$ modeled on $(\Dm_R, g_\kappa)$ with $R^2|\kappa|<1$. Let $\DD$ as in Theorem \ref{thm:main1} and the Hilbert scale $\{\wtH^s(M)\}$ as in \eqref{eq:wtH}. Then 
    \begin{align*}
	\cap_{s\ge 0} \wtH^s(M) = C^\infty(M).
    \end{align*}    
\end{lemma}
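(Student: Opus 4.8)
## Proof Plan

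The plan is to identify $\wtH^s(M)$ with a weighted Sobolev scale adapted to the degenerate-elliptic ("0-operator" or Kimura type) operator $\DD$, and then invoke the known regularity theory for such operators. Because $\DD$ is, up to lower-order terms and an explicit positive smooth factor, of the form $A r\partial_r^2 + B\partial_\omega^2$ near $\partial M$ (with $A,B>0$), it fits the framework of Epstein--Mazzeo Kimura operators, for which a scale of anisotropic Hölder/Sobolev spaces is defined and an elliptic regularity (hypoellipticity up to the boundary) statement is available. The inclusion $C^\infty(M) \subset \cap_{s\ge 0}\wtH^s(M)$ is the easy direction: one checks that $\DD$ maps $C^\infty(M)$ to itself (the degenerate coefficient $r$ in front of $\partial_r^2$ only improves matters at the boundary, and away from it $\DD$ is a genuine elliptic operator with smooth coefficients on a manifold with boundary), so that $\DD^{s/2}f \in L^2(M,w\,dVol_g)$ for all $s$ whenever $f$ is smooth; this uses that $\DD$ is positive and self-adjoint on $L^2(M,w\,dVol_g)$, so its fractional powers are well-defined via the spectral theorem, and integer powers $\DD^k f$ are smooth hence square-integrable.

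The reverse inclusion $\cap_{s\ge 0}\wtH^s(M) \subset C^\infty(M)$ is the substantive part. First I would reduce to the flat model: the intertwining diffeomorphisms $\Phi$ and $\ss$ from Theorem~\ref{thm:main1} conjugate $\DD$ on $(M,g)$ to (a weighted version of) the explicit operator in \eqref{eq:DD} on the Euclidean disk $\Dm$, and are diffeomorphisms of manifolds-with-boundary, so they preserve both $C^\infty$ and the $\wtH^s$ scale; hence it suffices to treat $(\Dm,\DD)$ as in \eqref{eq:DD}. On $\Dm$, the cleanest route is the Zernike polynomial expansion: the functional relation \eqref{eq:sqrt} together with the known spectral decomposition of $\DD$ shows that the Zernike polynomials are eigenfunctions of $\DD$ with eigenvalues $\lambda_n \sim n$ (growing linearly in the polynomial degree $n$). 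Then $f\in\cap_{s}\wtH^s(\Dm)$ is equivalent to the Zernike coefficients $\hat f_{n,k}$ decaying faster than any polynomial in $n$. The task becomes: rapidly decaying Zernike coefficients imply $f\in C^\infty(\Dm)$ (smooth up to the boundary). This is plausible because each Zernike polynomial is a genuine polynomial, with $C^k(\Dm)$-norm growing only polynomially in $n$, so a rapidly convergent Zernike series converges in every $C^k(\Dm)$.

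The main obstacle — and the place requiring genuine care — is this last step: controlling the $C^k(\Dm)$ norms of Zernike polynomials and their derivatives by a polynomial in the degree $n$, uniformly up to the boundary $\rho=1$ where the polar coordinate expression degenerates. One must either quote sharp pointwise/derivative bounds for Zernike (equivalently Jacobi) polynomials from the orthogonal-polynomial literature, or argue more structurally: the degeneration of $\DD$ at $\rho=1$ is exactly of the order that makes $\DD$ a 0-operator with indicial roots $0$ and something positive, so elliptic regularity in the 0-calculus (Mazzeo) gives that $\DD^N f \in L^2$ for all $N$ forces $f$ to lie in the intersection of all weighted $b$- or $0$-Sobolev spaces, which coincides with $C^\infty(\Dm)$ by the standard characterization — here I would need to check that the relevant indicial roots are nonnegative integers so no log terms or fractional-power boundary behavior appears, which is what ultimately pins down smoothness rather than mere conormality. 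I expect the boundary analysis at $\rho=1$ to be where all the work is; the interior smoothness is immediate from standard elliptic regularity since $\DD$ is uniformly elliptic on compact subsets of the interior.
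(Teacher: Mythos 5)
Your plan follows essentially the same route as the paper's proof: the inclusion $C^\infty(M)\subset\cap_{s\ge 0}\wtH^s(M)$ is the easy direction since $\DD$ preserves $C^\infty$; the general $(\kappa,R)$ case is reduced to the Euclidean disk through the intertwiners $\Phi$ and $w$ (the paper does exactly this, transporting the scale via $Z_{n,k}=w\,\Phi^*Z_{n,k}^e$ as in \eqref{eq:Znk2}); and on $\Dm$ membership in $\cap_s\wtH^s$ is read off as rapid decay of Zernike coefficients, to be upgraded to smoothness up to the boundary by polynomial-in-$n$ control of derivatives of $Z_{n,k}$. However, the step you flag as the ``main obstacle'' is precisely the step you leave open, and it is where all the content of the proof lies. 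The paper closes it with two concrete lemmas: Lemma \ref{lem:continuous}, which uses Szeg\H{o}'s bound on Jacobi polynomials to get $\sup_{\Dm}|Z_{n,k}|=1$ and hence the embedding $\wtH^\alpha(\Dm)\hookrightarrow C(\Dm)$ for $\alpha>3/2$; and Lemma \ref{lem:tame}, which uses the explicit ladder identities \eqref{eq:diffZnk} derived from the closed form \eqref{eq:Zernike} to show that $\partial$ and $\dbar$ map $\wtH^\alpha\to\wtH^{\alpha-\ell}$ boundedly for $\ell=2+\varepsilon$; iterating the second and applying the first gives $\partial^p\dbar^q f\in C(\Dm)$ for all $p,q$. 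So your first suggested route is the paper's, but as written the decisive estimate is asserted rather than proved. Two further cautions: the eigenvalue of $\DD$ on degree-$n$ Zernike polynomials is $(n+1)^2$, not $\sim n$ (harmless for the rapid-decay characterization, but incorrect as stated); and your fallback via the $0$-calculus is shakier than you suggest, since near $\rho=1$ the operator is of Kimura type $r\partial_r^2+\cdots$ rather than uniformly degenerate $(r\partial_r)^2+\cdots$, and intersections of the associated weighted Sobolev spaces generically produce polyhomogeneous conormal expansions with possible fractional or logarithmic terms, so without explicitly computing the indicial data that route does not obviously land in $C^\infty(\Dm)$.
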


Then as an immediate consequence of Theorem \ref{thm:main1}, the following mapping properties are immediate
\begin{corollary}\label{cor:1}
    Let $M, g, \kappa, R, \DD, T, w$ defined as in Theorem \ref{thm:main1}. Then,
    \begin{align*}
	\adjmu I_0 w (\wtH^s(M)) &= \wtH^{s+1}(M), \qquad \forall s\in \Rm, \\
	\adjmu I_0 (C^\infty(M)) &= C^\infty(M).
    \end{align*}
\end{corollary}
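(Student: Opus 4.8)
The plan is to bootstrap the two identities of Theorem~\ref{thm:main1} through the Hilbert scale $\{\wtH^s(M)\}$, and then intersect. First I would establish the $L^2$-statement $s=0$, i.e. $\adjmu I_0 w\colon L^2(M,w\,dVol_g)\to\wtH^1(M)$ is onto. Since $\DD$ is positive, coercive and formally self-adjoint on $L^2(M,w\,dVol_g)$ (as asserted in Theorem~\ref{thm:main1}), it admits a self-adjoint Friedrichs realization with discrete spectrum, and the fractional powers $\DD^{s/2}$ used to define $\wtH^s(M)$ make sense. The relation \eqref{eq:rel2}, $\DD(\adjmu I_0 w)^2 = c^2_{\kappa,R}\,\mathrm{id}$, already shows $\adjmu I_0 w$ is injective on $C^\infty(M)$ with a smooth left inverse, so the content is purely the surjectivity/isomorphism onto the correct space, and the sharp gain of exactly one derivative.

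Second, I would argue that $(\adjmu I_0 w)^2$ is, up to the constant $c^2_{\kappa,R}$, precisely $\DD^{-1}$ on the scale: $\DD$ and $\adjmu I_0 w$ commute (both are, morally, functions of the same operator — this is what \eqref{eq:rel1} and \eqref{eq:rel2} encode), so from $\DD(\adjmu I_0 w)^2=c^2_{\kappa,R}\,\mathrm{id}$ one gets $(\adjmu I_0 w)^2 = c^2_{\kappa,R}\,\DD^{-1}$, and hence $\adjmu I_0 w = c_{\kappa,R}\,\DD^{-1/2}$ as operators on $C^\infty(M)$ (the sign/branch being fixed by positivity of $\adjmu I_0 w$, which follows since $I_0 w\colon L^2(M,w\,dVol_g)$ — after absorbing $w$ appropriately — is essentially $I_0$ conjugated, whose normal operator is nonnegative; one should check the weight $w>0$ does not spoil this). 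Granting $\adjmu I_0 w = c_{\kappa,R}\,\DD^{-1/2}$, the definition \eqref{eq:wtH} immediately gives $\DD^{-1/2}\colon\wtH^s(M)\xrightarrow{\ \sim\ }\wtH^{s+1}(M)$ for every $s\in\Rm$, which is the first displayed line. I would phrase this cleanly via spectral decomposition in an orthonormal basis of $\DD$-eigenfunctions, so that $\wtH^s(M)$ is the weighted $\ell^2$ space of coefficients with weights $\lambda_j^{s}$, and $\adjmu I_0 w$ acts as multiplication by $c_{\kappa,R}\lambda_j^{-1/2}$.

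Third, for the $C^\infty$ statement: write $\adjmu I_0 = (\adjmu I_0 w)\circ w^{-1}$, with $w^{-1}\in C^\infty(M)$ nonvanishing. Multiplication by $w^{-1}$ maps $C^\infty(M)$ to itself, and by Lemma~\ref{lem:intersect} $C^\infty(M)=\cap_{s\ge0}\wtH^s(M)$; since $\adjmu I_0 w$ raises every scale index by $1$ and is an isomorphism at each level, it maps $\cap_{s\ge0}\wtH^s(M)$ onto $\cap_{s\ge1}\wtH^{s+1}(M)=\cap_{t\ge0}\wtH^t(M)=C^\infty(M)$ — one must also know multiplication by $w^{-1}$ is an isomorphism of $C^\infty(M)$, which is clear, and that it is \emph{onto}, i.e. every smooth $f$ is $w^{-1}$ times a smooth function, namely $w f$. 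Chaining these gives $\adjmu I_0(C^\infty(M)) = C^\infty(M)$.

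The main obstacle I anticipate is not the algebra but the functional-analytic bookkeeping around $\DD^{-1/2}$ and the scale $\wtH^s(M)$ for the full range $s\in\Rm$: one needs that $\DD$'s Friedrichs extension really does have a complete eigenbasis (so that $\wtH^s$ is well-defined for negative $s$ and the identities \eqref{eq:rel1}–\eqref{eq:rel2}, a priori only on $C^\infty(M)$, extend by density/continuity to all of $\wtH^s$), and that there are no domain subtleties at the degenerate-elliptic boundary of $\Dm_R$ — precisely the point where $\DD$ is of Kimura type and the $0$-calculus of \cite{Mazzeo1991,Epstein2013} is needed. Presumably the paper has set this up in the sections building toward \eqref{eq:LTkappa}; granting it, Corollary~\ref{cor:1} is indeed "immediate" as claimed, and the proof reduces to the three bootstrapping steps above.
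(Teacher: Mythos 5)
Your proposal is correct and follows essentially the same route as the paper: Theorem \ref{thm:main1} is proved by explicitly diagonalizing $\adjmu I_0 w$ and $\DD$ on the (conjugated) Zernike basis $Z_{n,k}$, with eigenvalues $c_{\kappa,R}/(n+1)$ and $(n+1)^2$ respectively, so the identification $\adjmu I_0 w = c_{\kappa,R}\,\DD^{-1/2}$ and the shift $\wtH^s\to\wtH^{s+1}$ are read off coefficientwise exactly as you describe, and the $C^\infty$ statement follows from Lemma \ref{lem:intersect} together with the factorization through the nonvanishing weight $w$. The functional-analytic caveats you raise (completeness of the eigenbasis, extension of the identities to all of $\wtH^s$) are indeed discharged in the paper by the $L^2$-completeness of the Zernike system rather than by any 0-calculus machinery.
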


\begin{remark}
    Corollary \ref{cor:1} is in stark contrast with the isomorphism properties
    \begin{align*}
	\adj I_0 &\colon H^{-1/2,(s)}(M) \longleftrightarrow H^{s+1}(M), \qquad s>-1, \\
	\adj I_0 &\colon d_M^{-1/2} C^\infty(M) \longleftrightarrow C^\infty(M),
    \end{align*}
    proved in \cite{Monard2017} for any simple Riemannian surface $(M,g)$. Above, $d_M$ is a smooth function on $M$ equal to dist$(\x,\partial M)$ in a neighborhood of $\partial M$, and $H^{-1/2,(s)}(M)$ denotes a scale of H\"ormander $(-1/2)$-transmission spaces, whose intersection is $d_M^{-1/2} C^\infty(M)$. These differences show in particular the crucial role played by the weight $\frac{1}{\mu}$. 
\end{remark}

To obtain mapping properties of $I_0$, it is not enough to fully understand the smoothing properties of $I_0$, but one must also account for the infinite-dimensional cokernel of this operator. 

On the $\partial_+ SM$ side, we first define the relation $\SS_A$ which is the composition of the scattering relation and the antipodal map. An important space in our analysis will be 
\begin{align}
    C_{\alpha,-,+}^\infty(\partial_+ SM) := \{ u\in C^\infty(\partial_+ SM),\ A_- u \text{ is smooth and fiberwise odd on } \partial_+ SM \},
    \label{eq:Calmp}
\end{align}
see also \cite[Appendix A]{Mishra2019}, where $A_-$ turns a function on $\partial_+ SM$ into its odd extension to $\partial SM$ with respect to the scattering relation. In our circularly symmetric cases, the space $L^2(\partial_+ SM, d\Sigma^2)$ splits orthogonally into $L^2_+ \oplus L^2_-$, where
\begin{align*}
    L^2_{\pm}(\partial_+ SM, d\Sigma^2) = L^2(\partial_+ SM, d\Sigma^2) \cap \ker (id \mp \SS_A^*).
\end{align*}
In our case, the action takes place in $L^2_+$ since $I_0 f$ does not depend on the orientation of a geodesic. One may then show that with $T$ defined in Theorem \ref{thm:main1}, and upon looking at smooth elements, $T(\ker (id\pm \SS_A^*)) \subset \ker (id\mp \SS_A^*)$, in particular, $\ker (id \pm \SS_A^*)$ is stable under $-T^2$. This justifies the construction of the following Hilbert scale:  
\begin{align}
    H_{T,+}^s(\partial_+ SM) := \left\{u\in L_+^2(\partial_+ SM, d\Sigma^2), \quad (-T^2)^{s/2} u \in L_+^2 (\partial_+ SM, d\Sigma^2) \right\},
    \label{eq:HT}
\end{align}
whose intersection can be shown to be nothing but
\begin{align}
    \cap_{s\ge 0} H_{T,+}^s (\partial_+ SM) = C_{\alpha,-,+}^\infty(\partial_+ SM).
    \label{eq:intersectHT}
\end{align}

Such a cokernel can in fact be fully described as the $L^2$-orthocomplement of the kernel of a natural operator 
\begin{align}
    C_- \colon L_+^2(\partial_+ SM, d\Sigma^2) \to L_+^2(\partial_+ SM, d\Sigma^2), \qquad C_- := \frac{1}{2} A_-^* H_- A_-,
    \label{eq:Cminus}
\end{align}
where $A_-$ denotes antisymmetrization with respect to scattering relation, $H_-$ is the odd fiberwise Hilbert transform on the fibers of $\partial SM$, and $A_-^*$ denotes the $L^2-L^2$ adjoint of $A_-$. In all cases considered, $C_-$ commutes with $-T^2$, and this implies that $C_-\colon H_{T,+}^s\to H_{T,+}^s$ is well-defined for all $s\in \Rm$.

With these definitions, we can now formulate our second main result: 

\begin{theorem}\label{thm:main2}
    Let $(M, g)$ modeled on $(\Dm_R, g_\kappa)$ with $R^2 |\kappa|<1$, let $\DD, T, w$ as in Theorem \ref{thm:main1}, and let $C_-$ defined in \eqref{eq:Cminus}. Then,
    \begin{align*}
	I_0 (\wtH^s(M)) &= \left\{ w\in H_{T,+}^{s+\frac{1}{2}} (\partial_{+} SM), \qquad C_- w = 0 \right\}, \qquad \forall s\in \Rm, \\
	I_0 (C^\infty(M)) &= \left\{ w\in C_{\alpha,-,+}^\infty(\partial_+ SM), \qquad C_- w = 0 \right\}.
    \end{align*}
    Moreover, we have the following equality, for all $s\in \Rm$ and $f\in \wtH^s(\Dm_R)$
    \begin{align}
	\|f\|_{\wtH^s(\Dm_R)} = \frac{1}{\sqrt{c_{\kappa,R}}} \|I_0 (wf)\|_{H^{s+1/2}_{T,+}(\partial_+ S_{(\kappa)}\Dm_R)}.
	\label{eq:sharp}
    \end{align}
\end{theorem}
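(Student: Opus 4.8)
\emph{Proof proposal.} The plan is to push everything through the single operator identity $(I_0 w)^*(I_0 w) = c_{\kappa,R}\,\DD^{-1/2}$ and then transport the functional calculus across $I_0 w$ by means of the intertwining relations \eqref{eq:rel1}. First I would set $U := I_0 w\colon L^2(M, w\, dVol_g)\to L^2_+(\partial_+ SM, d\Sigma^2)$ and observe, as in the discussion preceding Theorem \ref{thm:main1}, that the adjoint of $U$ for these weighted inner products is $\adjmu$, so that $U^*U = \adjmu I_0 w$ is a bounded, injective, positive self-adjoint operator on $L^2(M, w\, dVol_g)$. Since $\DD$ is positive, coercive and formally self-adjoint on the same space (Theorem \ref{thm:main1}), it is self-adjoint with bounded inverse, and \eqref{eq:rel2} reads $(U^*U)^2 = c^2_{\kappa,R}\,\DD^{-1}$; taking positive square roots of these two commuting positive operators gives
\begin{equation*}
    U^* U = c_{\kappa,R}\,\DD^{-1/2}.
\end{equation*}

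Next I would obtain the norm identity \eqref{eq:sharp} directly: for $f\in\wtH^s(M)$,
\begin{equation*}
    \|I_0(wf)\|^2_{H^{s+1/2}_{T,+}} = \big\|(-T^2)^{\frac{s+1/2}{2}} I_0 w f\big\|^2_{L^2} = \big\|I_0 w\,\DD^{\frac{s+1/2}{2}} f\big\|^2_{L^2} = c_{\kappa,R}\,\big\langle \DD^{-1/2}\DD^{\frac{s+1/2}{2}}f,\DD^{\frac{s+1/2}{2}}f\big\rangle = c_{\kappa,R}\|f\|^2_{\wtH^s(M)},
\end{equation*}
where the second equality uses the second relation of \eqref{eq:rel1} at the real power $\tfrac{s+1/2}{2}$ (to be justified by functional calculus, see below) and the third uses the identity just displayed. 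This already shows $I_0 w\colon\wtH^s(M)\to H^{s+1/2}_{T,+}(\partial_+SM)$ boundedly with constant $\sqrt{c_{\kappa,R}}$, and since $\mathrm{Ran}(I_0 w)=\mathrm{Ran}(I_0|_{L^2})\subset\overline{\mathrm{Ran}(I_0)}=\ker C_-$ (cokernel description around \eqref{eq:Cminus}) and $C_-$ is bounded on each $H^s_{T,+}$, I get $I_0 w(\wtH^s(M))\subset\{u\in H^{s+1/2}_{T,+}(\partial_+SM):C_-u=0\}$ by density of $L^2(M)$ in $\wtH^s(M)$.

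For the reverse inclusion I would polar-decompose $U=V|U|=\sqrt{c_{\kappa,R}}\,V\DD^{-1/4}$; since $|U|=\sqrt{c_{\kappa,R}}\,\DD^{-1/4}$ is injective with dense range, $V$ is a genuine isometry of $L^2(M,w\,dVol_g)$ onto $\overline{\mathrm{Ran}\,U}=\ker C_-\subset L^2_+$. Feeding the polar form into the first relation of \eqref{eq:rel1} and cancelling $\DD^{-1/4}$ (a bijection of $C^\infty(M)$, by Lemma \ref{lem:intersect} and the mappings $\DD^{\pm1/4}\colon\wtH^s\to\wtH^{s\mp1/2}$) shows that $V$ conjugates $\DD$ into the restriction of $-T^2$ to $\ker C_-$, which is self-adjoint there because $-T^2$ commutes with $C_-$ and stabilizes $\ker(\mathrm{id}\mp\SS_A^*)$. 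Running this conjugation through the Borel functional calculus yields, for each $\sigma\in\Rm$, an isometric isomorphism $V\colon\wtH^\sigma(M)\to H_{T,+}^\sigma(\partial_+SM)\cap\ker C_-$; composing with $\DD^{-1/4}\colon\wtH^s(M)\to\wtH^{s+1/2}(M)$ gives that $I_0 w$ is, for each $s\in\Rm$, an isomorphism $\wtH^s(M)\to H_{T,+}^{s+1/2}(\partial_+SM)\cap\{C_-=0\}$, completing the first range identity. For the smooth statement I would use $w\,C^\infty(M)=C^\infty(M)$ to write $I_0(C^\infty(M))=I_0 w(\cap_{s\ge0}\wtH^s(M))$ (Lemma \ref{lem:intersect}), note that injectivity of $I_0 w$ lets the level-$s$ isomorphisms commute with the intersection, and conclude $I_0(C^\infty(M))=\big(\cap_{s\ge0}H^s_{T,+}(\partial_+SM)\big)\cap\{C_-=0\}=C^\infty_{\alpha,-,+}(\partial_+SM)\cap\{C_-=0\}$ via \eqref{eq:intersectHT}, using that $C_-$ preserves each $H^s_{T,+}$ and hence $C^\infty_{\alpha,-,+}$.

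I expect two genuine obstacles. The first is the promotion of the intertwining \eqref{eq:rel1} from integer to arbitrary real powers, together with the ensuing \emph{surjectivity} of $V$ at each Sobolev level — that $I_0 w(\wtH^s(M))$ fills all of $\ker C_-\cap H_{T,+}^{s+1/2}$ rather than a proper dense subspace. The cleanest route, and the one I would take, is the explicit simultaneous spectral decomposition afforded by circular symmetry: $\DD$ is diagonalized by the $(\kappa,R)$-transformed Zernike polynomials, $-T^2$ restricted to $\ker C_-$ by the corresponding functions on $\partial_+SM$, and $I_0 w$ then acts diagonally with singular values $c_{\kappa,R}^{1/2}\lambda^{-1/4}$ forced by the operator identity above, which rules out any spectral gap or accumulation that could obstruct the isomorphism. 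The second, more structural obstacle is the input about $C_-=\tfrac{1}{2}A_-^*H_-A_-$ itself: the identification $\overline{\mathrm{Ran}(I_0)}=\ker C_-$ and the boundedness of $C_-$ on the whole scale $\{H^s_{T,+}\}$, which rest on the mapping properties of the fiberwise odd Hilbert transform $H_-$ and on the stabilization $T(\ker(\mathrm{id}\pm\SS_A^*))\subset\ker(\mathrm{id}\mp\SS_A^*)$.
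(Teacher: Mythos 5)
Your proposal is correct and, once the abstract operator-theoretic packaging (the identity $U^*U = c_{\kappa,R}\,\DD^{-1/2}$ and the polar decomposition $U=V|U|$) is unwound, it reduces to exactly the paper's argument: the simultaneous diagonalization of $\DD$, $-T^2$ and $I_0 w$ in the transported Zernike bases $\widehat{Z_{n,k}}$, $\widehat{\psi_{n,k}}$ with singular values $\sqrt{c_{\kappa,R}}/\sqrt{n+1}$, combined with the diagonal action of $C_-$ imported from \cite{Mishra2019} — which is precisely the route you yourself identify as the way to close your two stated obstacles. The only cosmetic difference is that you extract the constant and the $1/2$-gain directly from \eqref{eq:rel2} rather than recomputing $\|Z_{n,k}\|$ and $\|\psi_{n,k}\|$ in the $(\kappa,R)$ model as the paper does in \eqref{eq:Znknorm2}--\eqref{eq:psinknorm2}.
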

The last equality \eqref{eq:sharp} provides both a continuity estimate and a stability estimate, with explicit control of the constants. As explained in \F{Proposition} \ref{rmk:SZ}, the scale $\wtH^s(\Dm)$ is inequivalent to the classical Sobolev scale on $\Dm$. As explained in \F{Proposition} \ref{rmk:otherscales}, despite the scale $H_{T,+}^s(\partial_+ SM)$ also being inequivalent to the classical Sobolev scale on $\partial_+ SM$, it is equivalent to it on the range of $I_0$. In particular, \F{if one defines as in \eqref{eq:classic} below a more ``classical'' Sobolev scale $H_+^{s}(\partial_+ SM)$ based on regularity control over {\em all} directions,} one can formulate continuity estimates for $I_0 \colon \wtH^s(\Dm) \to H^{s+1/2}_{+}(\partial_+ SM)$. Though these \F{results} are formulated for the Euclidean case only, they are expected to carry over straightforwardly to the other cases covered here. 

The proofs of Theorem \ref{thm:main1}, Lemma \ref{lem:intersect} and Theorem \ref{thm:main2} all rely on explicit calculations. The case of the Euclidean disk is worked out as reference case, then the general model $(\Dm_R, g_\kappa)$ is deduced from results on the reference case, through the use of intertwining diffeomorphisms. The latter intertwiners are reminiscent of the definition of the {\em factorization property} appearing in \cite{Palamodov2000}, and the recent results on pairs of generalized Funk transforms \cite{Agranovsky2019a,Agranovsky2019}, which heavily rely on intertwining diffeomorphisms relating the classical Funk transform with generalizations where integration is done along slices of the sphere by hyperplanes passing through a fixed point that is different from the origin, see also \cite{Beltukov2009}.

The relation \eqref{eq:rel2} (and the mere existence of such an $\DD$) may not be expected to hold for general surfaces, as the circular symmetry and the constancy of the curvature both seem to play important roles here. In addition, it is hopeless to expect a relation of the form \eqref{eq:rel1} in the presence of conjugate points, as the works \cite{Stefanov2012a,Monard2013b,Holman2015} show that the singular support of $\adj I_0$ in the interior of $M$ (hence of $\adjmu I_0$) contains strictly more than the conormal bundle to the diagonal of $M\times M$, and as such could not possibly be inverted, even microlocally, using a $\Psi$DO. However, it is fair to ask the following question: 

\begin{problem}
    Find a characterization of all simple surfaces-with-boundary $(M,g)$ where one can prove Theorems \ref{thm:main1} and \ref{thm:main2}.
\end{problem}

\subsection{Inversion formulas and regularization theory}
\label{sec:regularization}

\F{We end with a self-contained description of an important consequence of the results above, for the purpose of a theorically rigorous approach to regularized inversion in fan-beam coordinates. }

The operator $\DD$ appearing in Theorem \ref{thm:main1} is always a self-adjoint, unbounded operator on the space $L^2(M, w\ dVol_g)$, thus by the spectral theorem for unbounded self-adjoint operators, one can make sense of $F(\DD)$ for a large class of functions $F$ containing real powers. In addition, relations \eqref{eq:rel1} imply that for every such $F$,
\begin{align}
    I_0 w \circ F(\DD) = F(-T^2) \circ I_0 w, \qquad \F{F(\DD)\circ I_0^* = I_0^* \circ F(-T^2).}
    \label{eq:intertwF}
\end{align}
With $F (s) = s^{\alpha}$ and using \eqref{eq:rel1}-\eqref{eq:rel2}, this provides a family of new inversion formulas\F{.}
\begin{theorem} \label{thm:inversion}
    Let $(M,g)$ modeled on $(\Dm_R,g_\kappa)$ with $R^2 |\kappa|<1$, and let $\DD, T$ \F{and $c_{\kappa,R}$} as in Theorem \ref{thm:main1}. Then for all $f\in L^2(M, w\ dVol_g)$
    \begin{align}
	f = \frac{w}{c_{\kappa,R}} \DD^{\frac{1}{2}-\alpha} \circ \adjmu \circ (-T^2)^\alpha \circ I_0 f. 	
	\label{eq:inversion}
    \end{align}
\end{theorem}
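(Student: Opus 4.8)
The plan is to derive \eqref{eq:inversion} directly from the functional relations \eqref{eq:rel1}--\eqref{eq:rel2} together with the functional-calculus intertwining \eqref{eq:intertwF}. Since $\DD$ is self-adjoint, positive and coercive on $L^2(M,w\,dVol_g)$, the spectral theorem gives meaning to the real powers $\DD^\beta$, and in particular to $\DD^{1/2-\alpha}$ for any $\alpha\in\Rm$; likewise $-T^2$ is a nonnegative self-adjoint operator on the appropriate $L^2_+$ space, so $(-T^2)^\alpha$ is well-defined on a suitable dense domain. I would begin by recording that \eqref{eq:intertwF} holds for $F(s)=s^\beta$ on smooth elements, which is the heart of the matter, and then verify the identity \eqref{eq:inversion} first on the dense subspace $C^\infty(M)$ (equivalently $\cap_s \wtH^s(M)$ by Lemma \ref{lem:intersect}) before extending by density and continuity using the sharp norm equivalences of Corollary \ref{cor:1} and Theorem \ref{thm:main2}.

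The computation itself, for $f\in C^\infty(M)$, is a short chain of identities. Starting from the right-hand side of \eqref{eq:inversion} and using the second relation in \eqref{eq:intertwF} with $F(s)=s^\alpha$ to pull $(-T^2)^\alpha$ through $\adjmu = I_0^*$, namely $\adjmu\circ(-T^2)^\alpha = \DD^{-\alpha}\circ\adjmu$ wait --- more carefully, $\DD^\alpha\circ\adjmu = \adjmu\circ(-T^2)^\alpha$, hence $\adjmu\circ(-T^2)^\alpha = \DD^\alpha\circ\adjmu$, so
\begin{align*}
    \frac{w}{c_{\kappa,R}}\,\DD^{\frac12-\alpha}\circ\adjmu\circ(-T^2)^\alpha\circ I_0 f
    = \frac{w}{c_{\kappa,R}}\,\DD^{\frac12-\alpha}\circ\DD^{\alpha}\circ\adjmu\circ I_0 f
    = \frac{w}{c_{\kappa,R}}\,\DD^{\frac12}\circ\adjmu\circ I_0 f.
\end{align*}
It then remains to identify $\frac{w}{c_{\kappa,R}}\DD^{1/2}\adjmu I_0 f$ with $f$. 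Writing $I_0 = I_0 w\circ(w^{-1}\cdot)$ and applying \eqref{eq:rel2} in the form $\DD(\adjmu I_0 w)^2 = c_{\kappa,R}^2\,\mathrm{id}$, one factors $\DD^{1/2}\adjmu I_0 w = c_{\kappa,R}(\adjmu I_0 w)^{-1}$ on $C^\infty(M)$, so that $\DD^{1/2}\adjmu I_0 f = \DD^{1/2}\adjmu I_0 w\,(w^{-1}f) = c_{\kappa,R}(\adjmu I_0 w)^{-1}(w^{-1}f)$; but this is not yet $c_{\kappa,R}w^{-1}f$, so the cleaner route is to use the two factors of \eqref{eq:rel2} symmetrically: since $\DD$ commutes with $(\adjmu I_0 w)^2$ and both are self-adjoint positive on $L^2(M,w\,dVol_g)$, the square root identity $\DD^{1/2}(\adjmu I_0 w) = c_{\kappa,R}\,\mathrm{id}$ holds (up to the automatic sign, both operators being positive), whence $\DD^{1/2}\adjmu I_0 w = c_{\kappa,R}$ and therefore $\DD^{1/2}\adjmu I_0 f = \DD^{1/2}\adjmu I_0 w(w^{-1}f) = c_{\kappa,R}w^{-1}f$, giving exactly $\frac{w}{c_{\kappa,R}}\DD^{1/2}\adjmu I_0 f = f$.

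The main obstacle is the justification that the formal manipulations with fractional powers are legitimate: one must check that all the operators in \eqref{eq:inversion} are densely defined and that the intermediate quantities lie in the domains where \eqref{eq:intertwF} applies, rather than merely manipulating symbols. Concretely, for $f\in\wtH^s(M)$ one needs $I_0 f\in H^{s+1/2}_{T,+}$ (Theorem \ref{thm:main2}), then $(-T^2)^\alpha I_0 f\in H^{s+1/2-2\alpha}_{T,+}$, then $\adjmu$ maps this into $\wtH^{s+1-2\alpha}$, wait --- into $\wtH^{s+3/2-2\alpha}$? --- one tracks the half-integer shift carefully via Corollary \ref{cor:1} and Theorem \ref{thm:main2}, and finally $\DD^{1/2-\alpha}$ brings the result back to $\wtH^s$; multiplication by the non-vanishing $w\in C^\infty(M)$ preserves each $\wtH^s$. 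The cleanest presentation is to prove the identity on $C^\infty(M)$, where every operator acts unambiguously and the spectral identities $\DD^{1/2-\alpha}\DD^{\alpha} = \DD^{1/2}$ and $\DD^{1/2}(\adjmu I_0 w) = c_{\kappa,R}$ are immediate consequences of the spectral theorem applied to \eqref{eq:rel2}, and then invoke density of $C^\infty(M)$ in each $\wtH^s(M)$ together with the boundedness of the composite operator $\wtH^s(M)\to\wtH^s(M)$ (which reads off from the chain of mapping properties) to conclude \eqref{eq:inversion} for all $f\in L^2(M,w\,dVol_g)$.
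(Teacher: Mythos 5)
Your proposal is correct and follows essentially the same route as the paper: both arguments rest on the spectral-level square root $\DD^{1/2}\adjmu I_0 w = c_{\kappa,R}\,\mathrm{id}$ of \eqref{eq:rel2} together with the intertwining $\DD^\alpha\circ I_0^* = I_0^*\circ(-T^2)^\alpha$ from \eqref{eq:intertwF}, and conjugation by $w$; you simply run the chain from the right-hand side of \eqref{eq:inversion} back to $f$ rather than starting from $c_{\kappa,R}\,\mathrm{id} = \DD^{1/2}I_0^*I_0 w$ and inserting $\DD^{1/2-\alpha}\DD^{\alpha}$. The domain bookkeeping you add is sound but not part of the paper's (terser) argument.
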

\F{ \begin{proof}
	Since the proof of \eqref{eq:rel2} is seen at the spectral level, we also have $\DD^{1/2} I_0^* I_0 w = c_{\kappa,R}\ \text{id}|_{C^\infty(M)}$. We then deduce
	\begin{align*}
	    c_{\kappa,R}\ \text{id}|_{C^\infty(M)} = \DD^{1/2}\circ I_0^*\circ I_0 w = \DD^{1/2-\alpha} \circ \DD^\alpha\circ I_0^* \circ I_0 w \stackrel{\eqref{eq:intertwF}}{=} \DD^{\frac{1}{2}-\alpha}\circ I_0^*\circ (-T^2)^\alpha \circ I_0 w. 
	\end{align*}
	The result follows by left-multiplying by $w$, and right-multiplying by $w^{-1}$.
    \end{proof}
}

This family of formulas is in the spirit of \cite[\S II.2, Theorem 2.1]{Natterer2001}, where $(-T^2)^\alpha$ can be thought of as a Riesz potential in data space, and $\DD^\alpha$ can be thought of as a Riesz potential on $M$. For $\alpha = \frac{1}{2}$, equation \eqref{eq:inversion} becomes an inversion formula, to be contrasted with the Pestov-Uhlmann formula \cite{Pestov2004,Monard2015}
\begin{align}
    f = \frac{1}{4\pi} I_\perp^{\sharp}  \left(\frac{1}{2} A_+^* H_- A_-\right) I_0 f,
    \label{eq:PU}
\end{align}
where the backprojection operator $I_\perp^{\sharp}$ contains the differentiation step and is not the direct adjoint of $I_0$. On the other hand, the main challenge of the current formula is to find explicit ways to compute $(-T^2)^{1/2} = |T|$. Even more generally, one may be interested in regularizing \eqref{eq:inversion} or \eqref{eq:PU}, since $I_0$ is smoothing of order $1/2$ and its inversion is a mildly unstable process, sensitive to noise. To this end, the relation \eqref{eq:intertwF} gives the possibility of theoretically exact regularized reconstruction formulas, by combining all three \F{equations} above and assuming that $F$ is a low-pass filter in the sense that $\lim_{s\to \infty} F(s) = 0$. The strength of the regularization depends on the rate of decay of $F$ at infinity. 

\begin{theorem} \label{thm:regularization}
    Let $F$ \F{be} a low-pass filter, then the regularized reconstruction formulas hold: for all $f\in L^2(M, w\ dVol_g)$,
    \begin{align*}
	w F(\DD) \frac{1}{w} f &= \frac{w}{c_{\kappa,R}} \adjmu \circ (-T^2)^{1/2} F(-T^2) \circ I_0 f \\
	&= \frac{1}{4\pi} I_\perp^{\sharp} \circ  \left(\frac{1}{2} A_+^* H_- A_-\right) F(-T^2) \circ I_0 f.
    \end{align*}
\end{theorem}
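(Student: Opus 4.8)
The plan is to reduce both asserted identities to a single elementary fact, a reformulation of \eqref{eq:intertwF}: that $F(-T^2)$ sends $I_0 f$ back into the range of $I_0$ in an explicit way. Once this is available, both right-hand sides collapse automatically, since each of them is merely one of the two known left inverses of $I_0$ --- the new one from Theorem~\ref{thm:inversion} and the Pestov--Uhlmann one \eqref{eq:PU} --- evaluated on the same element. Concretely, I would first observe that, because $\DD$ is positive and coercive (Theorem~\ref{thm:main1}), its spectrum lies in $[\delta,\infty)$ for some $\delta>0$, so for a continuous low-pass filter $F$ ($\lim_{s\to\infty}F(s)=0$) the operator $F(\DD)$ is bounded and self-adjoint on $L^2(M,w\,dVol_g)$, while multiplication by the non-vanishing $w\in C^\infty(M)$ is bounded and boundedly invertible there. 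Applying the first relation of \eqref{eq:intertwF} to $w^{-1}f$ then gives, for all $f\in L^2(M,w\,dVol_g)$,
\begin{align}
    F(-T^2)\,I_0 f \;=\; I_0\big(w\,F(\DD)\,w^{-1}f\big),
    \label{eq:regbasic}
\end{align}
so that $F(-T^2)I_0 f$ equals $I_0$ applied to the element $h:=w\,F(\DD)\,w^{-1}f$ --- which is exactly the left-hand side $w\,F(\DD)\,\frac1w f$ of the claimed formulas.

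Next I would feed $h$ to the two left inverses of $I_0$: by Theorem~\ref{thm:inversion} with $\alpha=\tfrac12$, $\tfrac{w}{c_{\kappa,R}}\,\adjmu\circ(-T^2)^{1/2}$ is a left inverse of $I_0$ on $L^2(M,w\,dVol_g)$, and by \eqref{eq:PU} so is $\tfrac{1}{4\pi}\,I_\perp^{\sharp}\circ\big(\tfrac12 A_+^* H_- A_-\big)$. Using \eqref{eq:regbasic} to rewrite $F(-T^2)I_0 f=I_0 h$, one gets
\begin{align*}
    \frac{w}{c_{\kappa,R}}\,\adjmu\,(-T^2)^{1/2}\,F(-T^2)\,I_0 f &= \frac{w}{c_{\kappa,R}}\,\adjmu\,(-T^2)^{1/2}\,I_0 h = h, \\
    \frac{1}{4\pi}\,I_\perp^{\sharp}\,\big(\tfrac12 A_+^* H_- A_-\big)\,F(-T^2)\,I_0 f &= \frac{1}{4\pi}\,I_\perp^{\sharp}\,\big(\tfrac12 A_+^* H_- A_-\big)\,I_0 h = h,
\end{align*}
and $h = w\,F(\DD)\,\frac1w f$ by definition, which is the desired chain of equalities. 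The first of these two lines can alternatively be obtained purely spectrally: writing $(-T^2)^{1/2}F(-T^2)=G(-T^2)$ with $G(s)=s^{1/2}F(s)$, one intertwines $\adjmu\,G(-T^2)=G(\DD)\,\adjmu$ via \eqref{eq:intertwF}, inserts $\adjmu I_0 w=c_{\kappa,R}\DD^{-1/2}$ from the proof of Theorem~\ref{thm:inversion}, and cancels $\DD^{1/2}\DD^{-1/2}$.

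The main obstacle is not structural --- the algebra is three lines --- but the functional-analytic bookkeeping that makes it rigorous: one must check that each spectral-calculus symbol is applied where it is defined and that the cancellations above are licit. The delicate point is that for a general low-pass $F$ the operator $(-T^2)^{1/2}F(-T^2)$ need not be bounded on $L^2_+(\partial_+ SM,d\Sigma^2)$, being bounded precisely when $s\mapsto s^{1/2}F(s)$ is --- the natural ``filtered-backprojection'' regime. The cleanest route is therefore to prove the identities first for $f\in C^\infty(M)$: there $h=w\,F(\DD)\,w^{-1}f$ again lies in $\cap_{s\ge 0}\wtH^s(M)=C^\infty(M)$ by Lemma~\ref{lem:intersect} (since $F(\DD)$ commutes with $\DD$ and is $L^2(M,w\,dVol_g)$-bounded), $I_0 h\in C_{\alpha,-,+}^\infty(\partial_+ SM)$ by Theorem~\ref{thm:main2}, $\adjmu$ preserves $C^\infty$ by Corollary~\ref{cor:1}, and every operator acts on genuinely smooth objects; and then to extend by density of $C^\infty(M)$ in $L^2(M,w\,dVol_g)$, using that under the boundedness of $s\mapsto s^{1/2}F(s)$ each operator occurring is bounded on $L^2$ (on the $I_\perp^{\sharp}$ side one uses that, by \eqref{eq:regbasic}, $F(-T^2)I_0 f = I_0 h$ stays in the range of $I_0$, where the Pestov--Uhlmann inverse acts boundedly, with $h$ depending boundedly on $f$). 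Finally, for consistency with the range description of Theorem~\ref{thm:main2} one notes that $C_-$ commutes with $-T^2$, hence with $F(-T^2)$, so \eqref{eq:regbasic} respects the constraint $C_-(\cdot)=0$; this is reassuring but not logically needed once \eqref{eq:regbasic} is in hand.
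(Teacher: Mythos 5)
Your argument is correct and is exactly the route the paper intends: the theorem is presented as a direct consequence of combining \eqref{eq:intertwF} (which gives $F(-T^2)I_0 f = I_0\big(wF(\DD)w^{-1}f\big)$) with the two left inverses of $I_0$, namely \eqref{eq:inversion} at $\alpha=\tfrac12$ and the Pestov--Uhlmann formula \eqref{eq:PU}. The additional functional-analytic bookkeeping you supply (boundedness of $s\mapsto s^{1/2}F(s)$, proving first on $C^\infty(M)$ and extending by density) goes beyond what the paper records but is consistent with it.
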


While these formulas hold for general filters, the choice of appropriate filters is guided by various practical reasons (e.g., methods of implementation, avoiding 'ringing' effects). A discussion on these filters and appropriate methods of implementation is reserved for future work. 

\medskip

{\bf Outline.} The remainder of the article is organized as follows. We first cover proofs of Theorem \ref{thm:main1} and Lemma \ref{lem:intersect}, by first covering the Euclidean unit disk in Section \ref{sec:Euclidean}, followed by simple geodesic disks of constant curvature in Section \ref{sec:CCD}. We then prove Theorem \ref{thm:main2} in Section \ref{sec:mappingI0}. Some facts about Zernike polynomials and proofs of auxiliary lemmas are relegated to Appendix \ref{sec:appendix}.

\section{The Euclidean unit disk} \label{sec:Euclidean}

In the case of the Euclidean unit disk, the inward pointing boundary $\partial_+ S\Dm$ is parameterized by $(\beta,\alpha) \in \Sm^1\times (-\pi/2,\pi/2)$, where $\beta$ \F{parameterizes} the boundary point $\x = e^{i\beta}$ and $\alpha$ \F{is defined via the implicit relation $v = \binom{\cos(\beta+\pi+\alpha)}{\sin(\beta+\pi+\alpha)}$, if $v$ is a unit tangent vector} above $\x$. Then the measure on $\partial_+ SM$ is $d\Sigma^2 = d\beta d\alpha$ and in particular, we have $\mu = \cos\alpha$.  

\subsection{Intertwiners}

Let us define the operator $\adj\colon C_\alpha^\infty (\partial_+ S\Dm) \to C^\infty (\Dm)$ defined in the introduction as the formal adjoint of $I_0:L^2(\Dm) \to L^2(\partial_+ S\Dm, \mu\ d\Sigma^2)$, as well as $I_0^* := I_0^\sharp (\frac{1}{\mu}\cdot)$. Such an operator takes the form 
\begin{align}
    \adj g (\x) = \int_{\Sm^1} g(\beta_-(\x,\theta), \alpha_-(\x,\theta))\ d\theta,
    \label{eq:adj}
\end{align} 
where $\beta_-(\x,\theta)$, $\alpha_-(\x,\theta)$ are the fan-beam coordinates of the unique line passing through $(\x,\theta)$. In what follows, we will identify $\x$ with $\rho e^{i\omega}$. \F{See Figure \ref{fig:1} for a summary.

\begin{figure}[htpb]
    \centering
    \includegraphics[height=0.3\textheight]{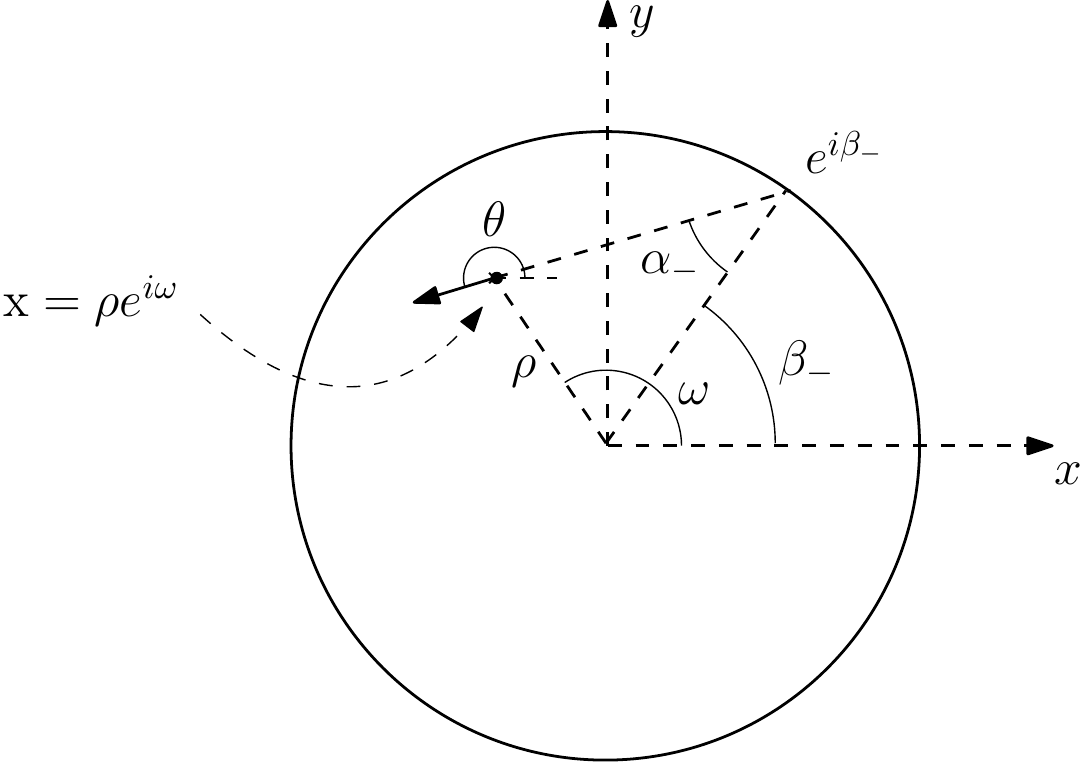}
    \caption{Setting of definition of $(\beta_-(\rho e^{i\omega}, \theta),\alpha_- (\rho e^{i\omega}, \theta))$ (written as $(\beta_-, \alpha_-)$ on the diagram). The rotation invariance implies that if $\theta$ and $\omega$ are translated by $\delta$, then $\beta_-$ is translated by $\delta$ and $\alpha_-$ remains unchanged.}
    \label{fig:1}
\end{figure}

From the observation made in Fig. \ref{fig:1},} these functions satisfy the following relation: 
\begin{align*}
    \beta_-(\rho e^{i\omega}, \theta) = \omega + \beta_- (\rho,\theta-\omega), \qquad \alpha_-(\rho e^{i\omega}, \theta) = \alpha_- (\rho,\theta-\omega).
\end{align*}
In particular, the expression of $\adj g$ immediately becomes
\begin{align*}
    \adj g (\rho e^{i\omega}) \F{ =  \int_{\Sm^1} g(\omega + \beta_-(\rho,\theta-\omega), \alpha_-(\rho,\theta-\omega))\ d\theta} =  \int_{\Sm^1} g(\omega + \beta_-(\rho,\theta), \alpha_-(\rho,\theta))\ d\theta.
\end{align*}
We then immediately see the first intertwining property
\begin{align*}
    \partial_\omega \circ \adj = \adj \circ \partial_\beta, \qquad \partial_\omega \circ \adjmu = \adjmu \circ \partial_\beta.
\end{align*}
Upon defining 
\begin{align}
    T := \partial_\beta - \partial_\alpha,
    \label{eq:Te}
\end{align}
a second intertwining property is then given as follows.

\begin{theorem}\label{thm:intertw} 
    Define the operators 
    \begin{align}
	L := (1-\rho^2) \frac{\partial^2}{\partial\rho^2} + \left(\frac{1}{\rho} - 3\rho\right) \frac{\partial}{\partial\rho} + \frac{1}{\rho^2} \frac{\partial^2}{\partial\omega^2},
	\label{eq:L}
    \end{align}
    and $D:= T^2+2\tan\alpha T$. Then we have the following intertwining properties: 
    \begin{align}
	L \circ \adj &= \adj \circ D, \label{eq:inter1} \\
	\DD \circ \adjmu &= \adjmu \circ (-T^2), \qquad \DD := -L+1. \label{eq:inter2}
    \end{align}    
\end{theorem}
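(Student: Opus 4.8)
The plan is to establish the second-order relation \eqref{eq:inter1} by differentiating the integral formula for $\adj g$ under the integral sign and integrating by parts in the fiber variable $\theta$, and then to deduce \eqref{eq:inter2} from it by a short conjugation; the relation $\partial_\omega\circ\adj=\adj\circ\partial_\beta$ is already recorded and will be used freely. The first ingredient needed is the explicit geometry of the chords: from Figure~\ref{fig:1}, the chord through $\rho e^{i0}$ carrying the unit direction $e^{i\theta}$ sits at signed distance $-\rho\sin\theta$ from the origin, so its entry fan angle satisfies $\sin\alpha_-(\rho,\theta)=-\rho\sin\theta$ and its entry boundary angle is $\beta_-(\rho,\theta)=\theta-\pi-\alpha_-(\rho,\theta)$; in particular $\partial_\rho\beta_-=-\partial_\rho\alpha_-$ and $\partial_\theta\beta_-=1-\partial_\theta\alpha_-$, while implicit differentiation of $\sin\alpha_-=-\rho\sin\theta$ yields closed forms for $\partial_\rho\alpha_-$, $\partial_\theta\alpha_-$ and $\partial_\rho^2\alpha_-$ in terms of $\cos\alpha_-=\sqrt{1-\rho^2\sin^2\theta}$.

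Next I would expand both sides. Writing $G(\rho,\omega,\theta)=g(\omega+\beta_-,\alpha_-)$ and letting $g_\beta,g_\alpha,g_{\beta\beta},\dots$ denote the derivatives of $g$ evaluated at $(\omega+\beta_-,\alpha_-)$, one has $\partial_\omega G=g_\beta$, $\partial_\rho G=(\partial_\rho\beta_-)g_\beta+(\partial_\rho\alpha_-)g_\alpha$ and the second derivatives accordingly, so that $L(\adj g)(\rho e^{i\omega})=\int_{\Sm^1}L_{(\rho,\omega)}G\,d\theta$ becomes the $\theta$-integral of a linear combination of $g_{\beta\beta},g_{\beta\alpha},g_{\alpha\alpha},g_\beta,g_\alpha$ with coefficients assembled from $\partial_\rho\alpha_-,\partial_\rho^2\alpha_-$ and the coefficients $1-\rho^2$, $\tfrac1\rho-3\rho$, $\rho^{-2}$ of $L$; likewise $\adj(Dg)(\rho e^{i\omega})=\int_{\Sm^1}\bigl(g_{\beta\beta}-2g_{\beta\alpha}+g_{\alpha\alpha}+2\tan\alpha_-(g_\beta-g_\alpha)\bigr)\,d\theta$. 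Subtracting, \eqref{eq:inter1} reduces to showing $\int_{\Sm^1}\Xi\,d\theta=0$ for every $g$, where $\Xi=Ag_{\beta\beta}+Bg_{\beta\alpha}+Cg_{\alpha\alpha}+Eg_\beta+Fg_\alpha$ with $A,B,C,E,F$ explicit functions of $(\rho,\theta)$.

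The mechanism that closes this is that $\partial_\theta G=(\partial_\theta\beta_-)g_\beta+(\partial_\theta\alpha_-)g_\alpha$, so for functions $P,Q$ of $(\rho,\theta)$ one computes $\partial_\theta(Pg_\beta+Qg_\alpha)=P(\partial_\theta\beta_-)g_{\beta\beta}+\bigl(P\partial_\theta\alpha_-+Q\partial_\theta\beta_-\bigr)g_{\beta\alpha}+Q(\partial_\theta\alpha_-)g_{\alpha\alpha}+(\partial_\theta P)g_\beta+(\partial_\theta Q)g_\alpha$. I would therefore exhibit $P,Q$ with $\Xi=\partial_\theta(Pg_\beta+Qg_\alpha)$; matching coefficients amounts to the over-determined system $A=P\partial_\theta\beta_-$, $C=Q\partial_\theta\alpha_-$, $B=P\partial_\theta\alpha_-+Q\partial_\theta\beta_-$, $\partial_\theta P=E$, $\partial_\theta Q=F$, which is verified by substituting the explicit formulas above (one finds, e.g., $Q=\cos\theta/(\rho\cos\alpha_-)$ and indeed $\partial_\theta Q=F$). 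This is precisely where the particular coefficients of $L$ and the term $2\tan\alpha\,T$ in $D$ are needed for consistency --- note already that $F=-E$, so $P+Q$ must be $\theta$-independent. Since $\partial\Sm^1=\varnothing$, $\int_{\Sm^1}\partial_\theta(\cdots)\,d\theta=0$, which proves \eqref{eq:inter1}; differentiation under the integral is legitimate for $0<\rho<1$, and the identity, being one of differential operators, extends to all of $\Dm$.

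Finally, \eqref{eq:inter2} follows by conjugation. Since $\adjmu=\adj\circ\tfrac1{\cos\alpha}$ and $\tfrac1{\cos\alpha}$ depends on $\alpha$ alone with $T\tfrac1{\cos\alpha}=-\tan\alpha\,\tfrac1{\cos\alpha}$, expanding $T$ and $T^2$ gives the pointwise identity $D\bigl(\tfrac1{\cos\alpha}g\bigr)=\tfrac1{\cos\alpha}(T^2g+g)$; hence $L\adjmu g=\adj\bigl(D\tfrac g{\cos\alpha}\bigr)=\adjmu(T^2g+g)$, and so $\DD\adjmu g=(1-L)\adjmu g=\adjmu(-T^2g)$. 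The main obstacle throughout is bookkeeping: fixing the correct branch and sign conventions for $\beta_-,\alpha_-$ from Figure~\ref{fig:1}, and then checking the family of trigonometric identities that turn $\Xi$ into an exact $\theta$-derivative; once the conventions are pinned down, each is a routine substitution, and the behaviour as $\rho\to 1$ is immaterial since the statement is an identity of differential operators.
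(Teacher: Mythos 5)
Your proposal is correct and follows essentially the same route as the paper: the same footpoint relations $\beta_-+\alpha_-+\pi=\theta$, $\sin\alpha_-=-\rho\sin\theta$, the same derivative formulas, closure of the argument by integrating exact $\theta$-derivatives over $\Sm^1$, and the same conjugation identity $D=\tfrac1\mu T^2\mu+1$ for \eqref{eq:inter2}. The only difference is organizational — you package the discrepancy as a single exact derivative $\partial_\theta\bigl(\tfrac{1+q}{\rho^2}g_\beta-\tfrac{q}{\rho^2}g_\alpha\bigr)$ with $q=\partial_\theta\alpha_-$, whereas the paper uses the two identities $\int_{\Sm^1}\partial_\theta G\,d\theta=\int_{\Sm^1}\partial_\theta^2G\,d\theta=0$ separately; these are the same computation, and your consistency checks (including $Q=\cos\theta/(\rho\cos\alpha_-)$ and $F=-E$) do verify.
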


\begin{proof} Proof of \eqref{eq:inter1}. In what follows, $\alpha_-$ and $\beta_-$ will be short for $\alpha_- (\rho,\theta)$ and $\beta_- (\rho,\theta)$. Note the easy two properties
    \begin{align*}
	\beta_- + \alpha_- + \pi = \theta, \qquad \sin \alpha_- = - \rho \sin \theta.
    \end{align*} 
    In particular, this gives $\frac{\partial\alpha_-}{\partial\rho} = - \frac{\sin\theta}{\cos \alpha_-} = \frac{1}{\rho} \tan \alpha_-$, $\frac{\partial \alpha_-}{ \partial \theta} = \frac{-\rho\cos\theta}{\cos\alpha_-}$, and the derivatives of $\beta_-$ can be deduced through the relations
    \begin{align*}
	\frac{\partial \beta_-}{ \partial \rho} = - \frac{\partial \alpha_-}{ \partial \rho}, \qquad \frac{\partial \beta_-}{ \partial \theta} = 1 - \frac{\partial \alpha_-}{ \partial \theta}.
    \end{align*}
    From these relations, we immediately deduce the property that 
    \begin{align*}
	\frac{\partial}{\partial \rho} \adj g = - \frac{1}{\rho} \adj [\tan \alpha Tg].
    \end{align*}
    Iterating this formula, we obtain
    \begin{align*}
	\frac{\partial^2}{\partial \rho^2} \adj g = \frac{1}{\rho^2} \adj [\tan \alpha Tg] + \frac{1}{\rho^2} \adj [\tan\alpha T (\tan\alpha T g)] = \frac{1}{\rho^2} \adj [\tan^2\alpha T^2 g - \tan^3 \alpha Tg].
    \end{align*}
    Then by direct algebra, using the last two identities, we obtain
    \begin{align}
	\begin{split}
	    [(1-\rho^2) \partial_\rho^2 + (\frac{1}{\rho}-3\rho)\partial_\rho] \adj g &= \frac{1}{\rho^2} \adj [\tan^2\alpha T^2 g - \tan\alpha(1+\tan^2\alpha)Tg] \dots \\
	    &\qquad - \adj [\tan^2 \alpha T^2g - \tan\alpha (\tan^2\alpha + 3) Tg].	    
	\end{split}
	\label{eq:ttmp}	
    \end{align}
    To obtain further identities, we write
    \begin{align*}
	0 &= \int_{\Sm^1} \partial_\theta ( g(\omega + \beta_-, \alpha_-))\ d\theta \\
	&=  \int_{\Sm^1} \left( \frac{\partial\beta_-}{\partial\theta} \partial_\beta + \frac{\partial\alpha_-}{\partial \theta} \partial_\alpha \right) g(\omega \F{+} \beta_-,\alpha_-)\ d\theta \\
	&= \adj [\partial_\beta g] + \rho \int_{\Sm^1} \frac{\cos\theta}{\cos\alpha_-} Tg (\omega + \beta_-, \alpha_-)\ d\theta,
    \end{align*} 
    as well as 
    \begin{align*}
	0 &= \int_{\Sm^1} \partial^2_\theta  ( g(\omega + \beta_-, \alpha_-))\ d\theta \\
	&= \int_{\Sm^1} \partial_\theta \left( \partial_\beta g + \frac{\rho\cos\theta}{\cos\alpha_-} Tg \right)\ d\theta \\
	&= \int_{\Sm^1} \left( \partial_{\beta}^2 g + \frac{2\rho\cos\theta}{\cos\alpha_-} T\partial_\beta g - \left( \frac{\rho\sin\theta}{\cos\alpha_-} + \rho^2 \cos^2\theta \frac{\sin\alpha_-}{\cos^3\alpha_-} \right) Tg + \frac{\rho^2\cos^2\theta}{\cos^2\alpha_-} T^2 g \right)\ d\theta. 
    \end{align*}
    From the previous identity and the fact that $T\partial_\beta = \partial_\beta T$, the second term equals $-2\adj [\partial_\beta^2 g]$. In the remaining terms, we use that $-\rho\sin\theta = \sin\alpha_-$ and $\rho^2 \cos^2 \theta = \rho^2(1-\sin^2\theta) = \rho^2 - \sin^2 \alpha_-$ and the previous equality becomes
    \begin{align*}
	\frac{1}{\rho^2} \adj [\tan^2 \alpha T^2 g - \tan\alpha(1 +\tan^2 \alpha) Tg] &= - \frac{1}{\rho^2} \adj [\partial_\beta^2 g] + \adj [-\tan\alpha(1+\tan^2\alpha)Tg + (1+\tan^2\alpha) T^2 g] \\
	&= -\frac{1}{\rho^2} \partial_\omega^2 \adj g + \adj [-\tan\alpha(1+\tan^2\alpha)Tg + (1+\tan^2\alpha) T^2 g].
    \end{align*}
    Plugging this relation into the right hand side of \eqref{eq:ttmp}, we obtain
    \begin{align*}
	\left[(1-\rho^2) \partial_\rho^2 + \left(\frac{1}{\rho}-3\rho\right)\partial_\rho\right] \adj g = - \frac{1}{\rho^2}\partial_\omega^2 \adj g + \adj [ (T^2 + 2\tan \alpha T)g], 
    \end{align*}
    hence \eqref{eq:inter1} is proved. Equation \eqref{eq:inter2} follows immediately once noticing that 
    \begin{align*}
	D = \frac{1}{\mu} T^2 \mu + 1,
    \end{align*}
    thus Theorem \ref{thm:intertw} is proved. 
\end{proof}

An integration by parts with zero boundary terms \F{(notice that $\rho$ and $1-\rho^2$ both vanish at the ends of $[0,1]$)} shows that for all \F{$u,v\in C^\infty(\Dm)$}, 
\begin{align}
    (\DD u, v)_{L^2(\Dm)} = \int_\Dm \left( (1-\rho^2) (\partial_\rho u) \overline{(\partial_\rho v)}  + \frac{1}{\rho^2} (\partial_\omega u)\overline{\partial_\omega v} \right)\ \rho\ d\rho\ d\omega + (u, v)_{L^2(\Dm)},
    \label{eq:Luv}    
\end{align}
in particular $\DD$ is formally self-adjoint on $L^2(\Dm)$. In addition, the operator $-T^2$ is formally self-adjoint on $L_+^2(\partial_+ SM)$ or $C^\infty_{\alpha,-,+}(\partial_+ SM)$ defined in \eqref{eq:Calmp}. Indeed, following notation in \cite{Mishra2019}, an orthogonal basis of $L_+^2(\partial_+ S\Dm)$ whose $C^\infty$ span gives $C^\infty_{\alpha,-,+}(\partial_+ SM)$ is given by 
\begin{align}
    \psi_{n,k} := \frac{(-1)^n}{4\pi} e^{i(n-2k)(\beta+\alpha)} (e^{i(n+1)\alpha} + (-1)^n e^{-i(n+1)\alpha}), \qquad n\ge 0, \quad k\in \Zm,
    \label{eq:psink}
\end{align}
and such that $(-T^2) \psi_{n,k} = (n+1)^2 \psi_{n,k}$ for all $n,k$. 

From these observations, passing to the adjoints in \eqref{eq:inter2}, the further intertwining property holds
\begin{align}
    I_0 \circ \DD = (-T^2) \circ I_0.
    \label{eq:inter3}
\end{align}

\subsection{Functional relation for $\adjmu I_0$} \label{sec:funcrel}

In addition to the observations made at the end of last section, we also define
\begin{align}
    Z_{n,k} = \adjmu \psi_{n,k}, \qquad n\ge 0, \qquad 0\le k\le n,
    \label{eq:Znk}
\end{align}
and note that $\adjmu \psi_{n,k} = 0$ for $k\notin [0,n]$. As proved in \cite{Mishra2019}, such functions coincide with the Zernike polynomials in the convention of \cite{Kazantsev2004}, and in light of the results of the previous section, we provide a short proof of the \F{Singular Value Decomposition (SVD)} of $I_0$ \F{(see \cite[Introduction]{Maass1991} for a succinct definition of the SVD of a linear operator, and the benefits of knowing it when approaching a linear inverse problem)}. This SVD has been known for quite some time, see e.g. \cite{Cormack1964,Louis1984}, and the idea to use intertwining differential operators for such derivations can be found e.g. in \cite{Maass1991}, though they are usually written there for each polar harmonic number separately. Equation \eqref{eq:inter2} allows to avoid this separation by harmonics. \F{Below, the ``hat" notation stands for vector normalization in their respective spaces.}

\begin{theorem}\label{thm:SVD}
    The Singular Value Decomposition of $I_0\colon L^2(\Dm) \to L^2 (\partial_+ S\Dm, d\Sigma^2)$ is given by 
    \begin{align}
	(\widehat{Z_{n,k}}, \widehat{\psi_{n,k}}, a_{n,k})_{n\ge 0, 0\le k\le n}, \qquad a_{n,k} := \frac{\sqrt{4\pi}}{\sqrt{n+1}}. 
	\label{eq:ank}
    \end{align}    
\end{theorem}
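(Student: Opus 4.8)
The plan is to verify the three SVD conditions directly using the intertwining relations established above, without ever computing the Zernike polynomials explicitly. Recall that a triple $(\widehat{Z_{n,k}}, \widehat{\psi_{n,k}}, a_{n,k})$ is a singular system for $I_0$ provided: (i) the $\widehat{\psi_{n,k}}$ form an orthonormal basis of $\overline{\mathrm{Ran}(I_0)} \subset L^2_+(\partial_+ S\Dm, d\Sigma^2)$; (ii) the $\widehat{Z_{n,k}}$ form an orthonormal basis of $\ker(I_0)^\perp = \overline{\mathrm{Ran}(I_0^*)} \subset L^2(\Dm)$; and (iii) $I_0^* I_0 Z_{n,k} = a_{n,k}^2 Z_{n,k}$, equivalently $I_0 Z_{n,k} = a_{n,k} \widehat{\psi_{n,k}}\,\|Z_{n,k}\|$ with matching normalizations.

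First I would establish that $I_0^* I_0 = \adjmu I_0$ acts as $a_{n,k}^2 = \frac{4\pi}{n+1}$ on $Z_{n,k}$. Since the $\psi_{n,k}$ satisfy $(-T^2)\psi_{n,k} = (n+1)^2 \psi_{n,k}$, and $Z_{n,k} = \adjmu \psi_{n,k}$, relation \eqref{eq:inter2} gives $\DD Z_{n,k} = \DD \adjmu \psi_{n,k} = \adjmu(-T^2)\psi_{n,k} = (n+1)^2 Z_{n,k}$. Thus $Z_{n,k}$ is an eigenfunction of $\DD$ with eigenvalue $(n+1)^2$. On the other hand, the reference-case instance of \eqref{eq:rel2}, namely \eqref{eq:sqrt}, reads $\DD (I_0^* I_0)^2 = 4\pi\,\mathrm{id}$ on $C^\infty(\Dm)$; applying this to $Z_{n,k}$ and using that $I_0^* I_0$ commutes with $\DD$ (which follows from \eqref{eq:inter2} and its adjoint \eqref{eq:inter3}), one gets $(n+1)^2 (I_0^* I_0)^2 Z_{n,k} = 4\pi Z_{n,k}$. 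Since $I_0^* I_0$ is a nonnegative self-adjoint operator, this forces $I_0^* I_0 Z_{n,k} = \frac{\sqrt{4\pi}}{\sqrt{(n+1)^2}}\cdot\sqrt{4\pi}\cdot\frac{1}{?}$ — more precisely $(I_0^*I_0 Z_{n,k})^2$-eigenvalue is $\frac{4\pi}{(n+1)^2}$, so the eigenvalue of $I_0^* I_0$ on $Z_{n,k}$ is $\frac{\sqrt{4\pi}}{n+1} = \frac{2\sqrt\pi}{n+1}$; hmm, this gives $a_{n,k}^2 = \frac{2\sqrt\pi}{n+1}$, which does not match. I would therefore instead take the cleaner route of computing $\|Z_{n,k}\|_{L^2(\Dm)}$ and $\|\psi_{n,k}\|_{L^2_+}$ directly and use $\|I_0 f\|^2 = (I_0^* I_0 f, f)$: from $\psi_{n,k}$ as in \eqref{eq:psink} one computes $\|\psi_{n,k}\|^2_{L^2_+}$ by the orthogonality relations of the exponentials (this is the routine part), and then $\|Z_{n,k}\|^2 = (\adjmu\psi_{n,k}, \adjmu\psi_{n,k}) = (I_0 \adjmu \psi_{n,k}, \psi_{n,k})$; combining with the eigenvalue relation $I_0^* I_0 Z_{n,k} = \lambda_{n,k} Z_{n,k}$ pins down $\lambda_{n,k}$, and hence $a_{n,k} = \sqrt{\lambda_{n,k}}$, and I would check the bookkeeping yields $\frac{\sqrt{4\pi}}{\sqrt{n+1}}$.

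Next I would address completeness. For the data side, the family $\{\psi_{n,k}\}_{n\ge0, k\in\Zm}$ is stated to be an orthogonal basis of $L^2_+(\partial_+ S\Dm)$; since $\adjmu\psi_{n,k} = 0$ precisely when $k\notin[0,n]$, and $\mathrm{Ran}(I_0)^\perp = \ker(\adjmu)$, the closed range of $I_0$ is exactly the closed span of $\{\psi_{n,k} : 0\le k\le n\}$, so $\{\widehat{\psi_{n,k}}\}_{0\le k\le n}$ is an orthonormal basis of $\overline{\mathrm{Ran}(I_0)}$, giving (i). For (ii), I need $\{\widehat{Z_{n,k}}\}_{0\le k\le n}$ to be an orthonormal basis of $\overline{\mathrm{Ran}(\adjmu)} = \ker(I_0)^\perp$. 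Orthogonality: $Z_{n,k}$ are eigenfunctions of the self-adjoint $\DD$, so those with distinct eigenvalues $(n+1)^2$ are automatically orthogonal, and within a fixed $n$ one checks $(\adjmu\psi_{n,k}, \adjmu\psi_{n,k'}) = (I_0^* I_0 \psi_{n,k}', \ldots)$ — better, use that $I_0^*I_0$ is diagonalized by $\psi_{n,k}$ up to the $\adjmu$ bookkeeping, since $I_0 \adjmu = $ (by \eqref{eq:inter3} and density) is a function of $-T^2$ hence diagonal on the $\psi_{n,k}$ basis with $k\in[0,n]$. Completeness of $\{Z_{n,k}\}$ in $\overline{\mathrm{Ran}(\adjmu)}$ then follows since $\{\psi_{n,k}\}_{0\le k\le n}$ is complete in the orthocomplement of $\ker\adjmu$ and $\adjmu$ is injective there.

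The main obstacle I expect is the normalization constant: tracking the precise value $a_{n,k} = \sqrt{4\pi}/\sqrt{n+1}$ through the identifications $Z_{n,k} = \adjmu\psi_{n,k}$, the $(-T^2)\psi_{n,k} = (n+1)^2\psi_{n,k}$ eigenvalue relation, and the functional relation \eqref{eq:sqrt} requires careful bookkeeping of which operator ($\adj$ vs.\ $\adjmu$, $I_0$ vs.\ $I_0 w$ with $w\equiv1$) carries which power, and of the $L^2$ norms of $\psi_{n,k}$ and $Z_{n,k}$. The cleanest logical path is: (1) $\DD Z_{n,k} = (n+1)^2 Z_{n,k}$ from \eqref{eq:inter2}; (2) $I_0 Z_{n,k}$ is proportional to $\psi_{n,k}$ because $I_0\adjmu$ commutes with $-T^2$ and the eigenspaces of $-T^2$ among the admissible $\psi$'s are one-dimensional per $(n,k)$; (3) compute the proportionality constant via a single pairing $(I_0 Z_{n,k}, \psi_{n,k}) = (Z_{n,k}, \adjmu\psi_{n,k}) = \|Z_{n,k}\|^2$ together with \eqref{eq:sqrt} evaluated at $Z_{n,k}$ to get $\|Z_{n,k}\|^2 = \frac{4\pi}{(n+1)^2}\|\cdot\|$-type relation; (4) renormalize. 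Everything else is orthogonality of eigenfunctions of self-adjoint operators plus the already-cited completeness of the $\psi_{n,k}$ in $L^2_+$.
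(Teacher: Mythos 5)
Your treatment of orthogonality and completeness is sound and essentially matches the paper: both systems are joint eigenfunctions of commuting formally self-adjoint operators ($\DD$ and $-i\partial_\omega$ on the disk side, $-T^2$ and $-i\partial_\beta$ on the data side), and completeness of the $\psi_{n,k}$ in $L^2_+$ together with $\adjmu\psi_{n,k}=0$ for $k\notin[0,n]$ identifies the closure of the range. (One small repair: in step (2) of your ``cleanest path,'' the $(n+1)^2$-eigenspace of $-T^2$ alone is $(n+1)$-dimensional even among admissible $\psi$'s; you need the \emph{joint} eigenspaces of $-T^2$ and $-i\partial_\beta$ to get one-dimensionality, which is what the paper uses.)

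The genuine gap is in the computation of $a_{n,k}$, and neither of your two routes closes it. Route one invokes \eqref{eq:sqrt}, but in this paper that identity is Theorem \ref{thm:relation}, which is \emph{deduced from} the SVD; using it here is circular. (Incidentally, the numerical mismatch you hit is caused by a typo in \eqref{eq:sqrt} --- the constant should be $(4\pi)^2$, consistent with \eqref{eq:rel2} and Theorem \ref{thm:relation} --- not by an error in your arithmetic; with $(4\pi)^2$ your computation would give $a_{n,k}^2 = 4\pi/(n+1)$, but the logic would still be circular.) Route two reduces to the identity $\|Z_{n,k}\|^2 = (I_0\adjmu\psi_{n,k},\psi_{n,k}) = \lambda_{n,k}\|\psi_{n,k}\|^2$, which is one equation in the two unknowns $\lambda_{n,k}$ and $\|Z_{n,k}\|^2$ and therefore does not ``pin down'' $\lambda_{n,k}$. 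What is missing is an \emph{independent} evaluation of $\|Z_{n,k}\|^2 = \pi/(n+1)$ (equation \eqref{eq:Znknorm}); the paper supplies this in the appendix by observing $Z_{n,0}=z^n$ and $Z_{n,n}=(-1)^n\zbar^n$ both have squared norm $\pi/(n+1)$, that $Z_{n,k+1}=BZ_{n,k}$ for a Beurling transform $B$ of operator norm at most $1$, and squeezing the intermediate norms. This step cannot be avoided by purely abstract intertwining arguments --- it is the one place where structural information about the Zernike family beyond the eigenvalue relations must enter. Once $\|Z_{n,k}\|^2=\pi/(n+1)$ and $\|\psi_{n,k}\|^2=1/4$ are in hand, $a_{n,k}=\|Z_{n,k}\|/\|\psi_{n,k}\|=\sqrt{4\pi}/\sqrt{n+1}$ follows as you intend.
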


\begin{proof}
    We obviously have $(-T^2) \psi_{n,k} = (n+1)^2 \psi_{n,k}$ and $-i\partial_\beta \psi_{n,k} = (n-2k) \psi_{n,k}$, which by self-adjointness on $L^2(\partial_+ S\Dm, d\Sigma^2)$ of the two operators applied, makes $\psi_{n,k}$ and orthogonal system. In addition, an immediate computation gives
    \begin{align*}
	\|\psi_{n,k}\|_{L^2(\partial_+ SM)}^2 = \frac{1}{4},\qquad n\ge 0, \qquad k\in \Zm.
    \end{align*}
    In addition we have, as explained in \cite{Mishra2019} $\adjmu \psi_{n,k} = 0$ for $k<0$ or $k>n$, and for $0\le k\le n$, we define $Z_{n,k} := \adjmu \psi_{n,k}$. By Theorem \ref{thm:intertw}, we compute 
    \begin{align*}
	\DD Z_{n,k} &= \DD \adjmu \psi_{n,k} = \adjmu (-T^2) \psi_{n,k} = (n+1)^2 Z_{n,k} \\
	-i\partial_\omega Z_{n,k} &= (n-2k) Z_{n,k},
    \end{align*}
    which immediately makes them an orthogonal system in $L^2(\Dm)$. This gives us orthogonal systems associated with $I_0$ and $\adjmu$ and to compute the singular values, it suffices to normalize all vectors. By definition we have 
    \begin{align*}
	\adjmu\ \widehat{\psi_{n,k}} = a_{n,k}\ \widehat{Z_{n,k}}, \qquad a_{n,k}:= \frac{\|Z_{n,k}\|_{L^2(\Dm)}}{\|\psi_{n,k}\|_{L^2(\partial_+ S\Dm)}},
    \end{align*}
    The SVD for $I_0$ then becomes $(\widehat{Z_{n,k}}, \widehat{\psi_{n,k}}, a_{n,k})$. To compute $a_{n,k}$ \F{it} is given in \cite{Kazantsev2004} that
    \begin{align}
	\|Z_{n,k}\|^2 = \frac{\pi}{n+1}, \qquad n\ge 0, \qquad 0\le k\le n.
	\label{eq:Znknorm}
    \end{align}
    While it is given without proof in \cite{Kazantsev2004} and may rely on properties of orthogonal polynomials or generating functions, we give a functional-analytic proof in the Appendix. The expression of $a_{n,k}$ given in \eqref{eq:ank} then follows. 
\end{proof}

While it is unclear whether the statement that follows is written explicitly in the literature, the ingredients for the proof were known since Zernike's seminal paper \cite{Zernike1934}.

\begin{theorem} \label{thm:relation}
    The following relation holds: 
    \begin{align*}
	\DD (\adjmu I_0)^2 = (4\pi)^2 Id.	
    \end{align*}    
\end{theorem}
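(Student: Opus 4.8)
The plan is to leverage the SVD established in Theorem~\ref{thm:SVD} together with the intertwining relations of Theorem~\ref{thm:intertw}, working entirely at the spectral level on the orthogonal bases $\{Z_{n,k}\}$ and $\{\psi_{n,k}\}$. Concretely, since $\{\widehat{Z_{n,k}}\}_{n\ge 0,\,0\le k\le n}$ is an orthonormal system in $L^2(\Dm)$ whose closed span is all of $L^2(\Dm)$ (because $I_0$ is injective, so its cokernel on the domain side is trivial — equivalently the Zernike polynomials are complete in $L^2(\Dm)$), it suffices to verify the claimed identity on each $\widehat{Z_{n,k}}$ and then extend by density to $C^\infty(\Dm)$. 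On such a basis element the computation is a cascade of scalar multiplications.

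First I would record the three eigen-relations already available: $(-T^2)\psi_{n,k} = (n+1)^2 \psi_{n,k}$ from \eqref{eq:psink}; $\DD Z_{n,k} = (n+1)^2 Z_{n,k}$ from the computation in the proof of Theorem~\ref{thm:SVD}; and the singular-value statement $\adjmu\,\widehat{\psi_{n,k}} = a_{n,k}\,\widehat{Z_{n,k}}$ with $a_{n,k} = \sqrt{4\pi}/\sqrt{n+1}$, together with its partner $I_0\,\widehat{Z_{n,k}} = a_{n,k}\,\widehat{\psi_{n,k}}$. Then I would compute, on $\widehat{Z_{n,k}}$,
\begin{align*}
    (\adjmu I_0)^2 \widehat{Z_{n,k}} = \adjmu I_0 \adjmu I_0 \widehat{Z_{n,k}} = a_{n,k}\,\adjmu I_0 \adjmu \widehat{\psi_{n,k}} = a_{n,k}^2\, \adjmu I_0 \widehat{Z_{n,k}} = a_{n,k}^4\, \widehat{Z_{n,k}} = \frac{(4\pi)^2}{(n+1)^2}\,\widehat{Z_{n,k}}.
\end{align*}
Applying $\DD$ and using $\DD \widehat{Z_{n,k}} = (n+1)^2 \widehat{Z_{n,k}}$ then gives $\DD (\adjmu I_0)^2 \widehat{Z_{n,k}} = (4\pi)^2 \widehat{Z_{n,k}}$, which is the desired identity on each basis vector. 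One should note the subtlety that $(\adjmu I_0)^2$ must land in the domain of $\DD$; here this is automatic since $(\adjmu I_0)^2 \widehat{Z_{n,k}}$ is a scalar multiple of $\widehat{Z_{n,k}}$, a smooth function, and the identity on the dense span $C^\infty(\Dm)$ is exactly what is claimed. (The fact that $\cap_{s\ge 0}\wtH^s(\Dm) = C^\infty(\Dm)$, i.e.\ Lemma~\ref{lem:intersect}, is what makes $C^\infty(\Dm)$ the natural domain, matching the statement \eqref{eq:sqrt} and \eqref{eq:rel2}.)

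The only genuine obstacle is justifying that the $\{Z_{n,k}\}$ span a dense subspace, i.e.\ completeness of the Zernike polynomials in $L^2(\Dm)$ — but this follows from injectivity of $I_0$ (Mukhometov), since injectivity forces $\overline{\mathrm{ran}(\adjmu)} = L^2(\Dm)$, and $\mathrm{ran}(\adjmu)$ is spanned by the $Z_{n,k}$ as $\psi_{n,k}$ range over a basis of $L^2_+(\partial_+ S\Dm)$ with the $k\notin[0,n]$ ones mapping to zero. Alternatively one cites the classical completeness of Zernike polynomials directly from \cite{Zernike1934}. Everything else is bookkeeping: the norm formula $\|Z_{n,k}\|^2 = \pi/(n+1)$ feeding into $a_{n,k}$ is the one nontrivial scalar input, and it has already been quoted (with a promised appendix proof) in the statement of Theorem~\ref{thm:SVD}.
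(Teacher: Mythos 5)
Your proof is correct and is essentially the paper's own argument: the paper also verifies the identity at the spectral level, using $\adjmu I_0 Z_{n,k} = \frac{4\pi}{n+1} Z_{n,k}$ and $\DD Z_{n,k} = (n+1)^2 Z_{n,k}$ on the (complete) Zernike system. Your additional remarks on completeness and on the domain of $\DD$ are sound but go beyond what the paper spells out at this point.
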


\begin{proof} The proof is seen at the level of the spectral decomposition, since we have for every $n\ge 0$ and $0\le k\le n$, 
    \begin{align*}
	\adjmu I_0 Z_{n,k} = \frac{4\pi}{n+1} Z_{n,k}, \qquad \text{ and } \qquad \DD Z_{n,k} = (n+1)^2 Z_{n,k}.
    \end{align*}    
\end{proof}

\subsection{Properties of the operators $\DD$ and $-T^2$} \label{sec:LT}

The operator $\DD$ can be defined on $C^\infty(\Dm)$ (a dense domain in $L^2(\Dm)$), and for any $u\in C^\infty(\Dm)$, we have 
\begin{align}
	(\DD u, u) &= - \int_{\Dm} \left[\frac{1}{\rho} \partial_\rho \left(\rho^2 (1-\rho^2) \frac{1}{\rho} \partial_\rho u \right) + \frac{1}{\rho^2} \partial_\omega^2 u \right]\bar{u}  \rho\ d\rho d\omega + \|u\|_{L^2}^2 \nonumber \\
	&= \int_\Dm \left( (1-\rho^2) |\partial_\rho u|^2 + \frac{1}{\rho^2} |\partial_\omega u|^2 \right)\ \rho\ d\rho\ d\omega + \|u\|_{L^2}^2 \label{eq:Luu}\\
	&\ge \|u\|_{L^2}^2. \nonumber
\end{align}
From the $L^2(\Dm)$-completeness of the Zernike polynomials and the spectral action of $\DD$, we can immediately state the following facts: upon defining the space
\begin{align*}
    D(\DD) &= \left\{ f\in L^2(\Dm),\ \DD f \in L^2(\Dm) \right\} \\
    &= \left\{ f = \sum_{n,k} f_{n,k} \widehat{Z_{n,k}}, \qquad \sum_{n,k} (n+1)^4 |f_{n,k}|^2 <\infty \right\},
\end{align*}
the operator $\DD \colon D (\DD) \to L^2(\Dm)$ is an unbounded self-adjoint operator, with spectrum
\begin{align*}
    \text{sp } \DD = \{ (n+1)^2, \qquad n\in \Nm_0 \},
\end{align*}
with $(n+1)^2$ having multiplicity $n+1$. In particular, we have the property 
\begin{align}
    \| \DD f\|_{L^2(\Dm)} \ge \|f\|_{L^2(\Dm)}, \qquad \forall f\in D(\DD), 
\end{align}
with equality if and only if $f$ is constant. 

\paragraph{A Sobolev-Zernike scale.} For $s\in \Rm$, let us define the scale of Hilbert spaces 
\begin{align}
    \begin{split}
	\wtH^s(\Dm) &= \left\{ f = \sum_{n=0}^\infty\sum_{k=0}^n f_{n,k} \widehat{Z_{n,k}}, \qquad \sum_{n=0}^\infty (n+1)^{2s} \sum_{k=0}^n |f_{n,k}|^2 <\infty \right\} \\
	&= \left\{ f \in L^2(\Dm),\ \DD^{s/2}f \in L^2(\Dm)\right\},
    \end{split}
    \label{eq:Zernikev}
\end{align}
with continuous injections $\wtH^s \subset \wtH^t$ for $s>t$. An important property of the scale $\{\wtH^s(\Dm)\}_s$ is the following: 

\begin{theorem}\label{thm:intersect}
    \begin{align*}
	\bigcap_{s\in \Rm} \wtH^s (\Dm) = C^\infty(\Dm)
    \end{align*}
\end{theorem}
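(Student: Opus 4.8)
The inclusion $C^\infty(\Dm) \subset \bigcap_{s} \wtH^s(\Dm)$ should be the easy direction: if $f \in C^\infty(\Dm)$ then by \eqref{eq:Luu} and iteration, every power $\DD^m f$ again lies in $C^\infty(\Dm) \subset L^2(\Dm)$, since $\DD$ is a differential operator with smooth (though degenerate) coefficients mapping $C^\infty(\Dm)$ to itself; hence $f \in \wtH^{2m}(\Dm)$ for every $m$, and by interpolation (or monotonicity of the scale) $f \in \wtH^s(\Dm)$ for all $s$. The real content is the reverse inclusion: a function whose Zernike coefficients decay faster than any polynomial must be smooth up to the boundary. The plan is to prove this by exhibiting, for each multi-index of derivatives, a \emph{polynomially bounded} family of operators that relate the derivatives of $f$ to powers of $\DD$ applied to $f$, so that rapid decay of $\|\DD^m f\|_{L^2}$ forces all derivatives to be controlled in $L^2$, and then to upgrade $L^2$-Sobolev control to $C^\infty$ by the usual Sobolev embedding.

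Concretely, I would work harmonic-by-harmonic in the angular variable: write $f = \sum_{j \in \Zm} e^{ij\omega} f_j(\rho)$, observe that $\DD$ preserves each harmonic and acts as a one-dimensional ODE operator $\DD_j = -(1-\rho^2)\partial_\rho^2 - (\tfrac1\rho - 3\rho)\partial_\rho + \tfrac{j^2}{\rho^2} + 1$ on $(0,1)$. The Zernike polynomials restricted to harmonic $j$ are (up to normalization) the polynomials $Z_{n,k}$ with $n - 2k = j$, which are classical Jacobi-type orthogonal polynomials in $\rho^2$ times $\rho^{|j|}$. Membership in $\bigcap_s \wtH^s$ says each $f_j$ has rapidly decaying coefficients against this orthogonal polynomial basis, with decay uniform in $j$ after summing. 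Standard results on expansions in Jacobi polynomials (or an elementary argument: a function with rapidly decaying orthogonal-polynomial coefficients is smooth on the closed interval in the variable $t = \rho^2 \in [0,1]$, with all one-sided derivatives at the endpoints) then give that each $f_j$, as a function of $\rho^2$, is $C^\infty$ on $[0,1]$, with seminorms growing at most polynomially in $j$; this is exactly what is needed so that $\sum_j e^{ij\omega} f_j(\rho)$ and all its $\rho$- and $\omega$-derivatives converge uniformly, yielding $f \in C^\infty(\Dm)$. The change of variable to $\rho^2$ is what handles the coordinate singularity of polar coordinates at the origin; near $\rho = 1$ the degeneracy $(1-\rho^2)\partial_\rho^2$ is precisely a Kimura/$0$-type degeneration, so one should invoke either the elliptic regularity theory for such operators or, more self-containedly, the explicit Jacobi-polynomial structure.

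The main obstacle is the boundary behavior at $\rho = 1$: ordinary elliptic estimates fail because $\DD$ degenerates there, so one cannot simply say ``$\DD^m f \in L^2$ for all $m$ implies $f \in H^{2m}$ for all $m$ implies $f \in C^\infty$.'' I expect the cleanest route is to avoid PDE regularity entirely and instead use the known closed-form expression for $Z_{n,k}$ in terms of Jacobi polynomials $P^{(0,|j|)}_k(1 - 2\rho^2)$ (this is recalled in the appendix on Zernike polynomials that the paper defers to), together with the elementary fact that $g \in C^\infty([-1,1])$ if and only if its Jacobi coefficients decay faster than any power of the degree, with matching seminorm bounds. Assembling the uniform-in-$j$ estimates and summing the Fourier series in $\omega$ is then routine bookkeeping. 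I would present the argument as: (1) reduce to angular harmonics; (2) recall the Jacobi-polynomial form of $Z_{n,k}$; (3) quote/prove the one-variable statement that rapid Jacobi-coefficient decay $\Leftrightarrow$ $C^\infty$ on $[-1,1]$ with polynomial control; (4) track the $j$-dependence and resum to conclude $f \in C^\infty(\Dm)$; (5) note the converse inclusion as above. Since Lemma~\ref{lem:intersect} is stated for the general constant-curvature model but reduced via intertwining diffeomorphisms to this Euclidean case, it suffices to carry this out on $\Dm$.
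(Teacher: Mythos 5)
Your easy direction and your overall strategy for the hard direction (rapid decay of Zernike coefficients $\Rightarrow$ smoothness, via sup-norm control of the basis and its derivatives) are sound in outline, but there is a concrete gap at the step you label as ``routine bookkeeping,'' namely the uniformity in the angular frequency $j=n-2k$. The one-variable fact you want to quote --- rapid Jacobi-coefficient decay implies $C^\infty([-1,1])$ with polynomially controlled seminorms --- holds for a \emph{fixed} Jacobi parameter, but here the parameter is $(0,|j|)$ with $|j|$ ranging up to $n$, and the constants blow up: $\sup_{[-1,1]}|P_k^{(0,|j|)}|=\binom{k+|j|}{k}$ (attained at $t=-1$, i.e.\ $\rho=0$), which for $k\sim|j|\sim n/3$ is \emph{exponential} in $n$ and cannot be absorbed by any polynomial decay of the coefficients. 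The only reason the scheme can be saved is that the full Zernike function carries the compensating factor $\rho^{|j|}$, and it is the weighted quantity $\rho^{|j|}P_k^{(0,|j|)}(2\rho^2-1)$ whose sup over $[0,1]$ equals $1$; so your steps (3)--(4) must be carried out for the Zernike polynomials themselves, not for the Jacobi factors in the variable $t=\rho^2$, and you then still need uniform (in $n,j$) polynomial bounds on $\rho$- and $\omega$-derivatives of $\rho^{|j|}P_k^{(0,|j|)}(2\rho^2-1)$, which do not follow from the $m=0$ Szeg\H{o} bound alone. As written, the resummation would fail.

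The paper avoids this entirely by never differentiating a special function. It uses the $m=0$ bound $\sup_{\Dm}|Z_{n,k}|=1$ only once, to get the embedding $\wtH^{\alpha}(\Dm)\hookrightarrow C(\Dm)$ for $\alpha>3/2$ (Lemma \ref{lem:continuous}, via Cauchy--Schwarz and \eqref{eq:Znknorm}), and replaces all derivative estimates by the exact algebraic recurrence \eqref{eq:diffZnk}, $\partial_z Z_{n,k}=\sum_{p=0}^{P_{n,k}}(-1)^p(n-2p)Z_{n-1-2p,k-p}$, which expresses $\partial_z Z_{n,k}$ as a \emph{finite} sum of lower Zernike polynomials with coefficients bounded by $n$. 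This yields the ``tame'' bounds $\partial,\dbar\colon \wtH^{\alpha}\to\wtH^{\alpha-2-\varepsilon}$ (Lemma \ref{lem:tame}), and then $\partial^{p}\dbar^{q}f\in\wtH^{3}(\Dm)\subset C(\Dm)$ for all $p,q$, which is the conclusion. If you want to rescue your route, the cleanest fix is to import exactly this ladder identity (or an equivalent three-term relation for the derivatives of $\rho^{|j|}P_k^{(0,|j|)}$ that stays inside the Zernike family) in place of the generic Jacobi-expansion lemma; without it, the uniform-in-$j$ derivative bounds you need are not elementary.
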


\begin{proof} The inclusion $\supset$ is clear, since a smooth function $f$ is such that for all $n\ge 0$, $\DD^n f\in L^2(\Dm)$. The proof of the inclusion $\subset$ is based on the next two lemmas, proved in the Appendix \ref{sec:missing}.     
    \begin{lemma}\label{lem:continuous} For all $\alpha > 3/2$, we have the continuous injection $\wtH^\alpha (\Dm) \to C(\Dm)$.
    \end{lemma}

    \begin{lemma}\label{lem:tame}
	There exists $\ell>0$ such that for every $\alpha\ge \ell$, the operators 
	\begin{align*}
	    \partial \colon \wtH^\alpha (\Dm)\to \wtH^{\alpha-\ell}(\Dm) \qquad \text{and} \qquad \dbar \colon \wtH^\alpha (\Dm)\to \wtH^{\alpha-\ell}(\Dm)
	\end{align*}
	are bounded. The index $\ell$ can be chosen as $2+\varepsilon$ for every $\varepsilon>0$.
    \end{lemma}
    (Note that the threshold $\ell$ in the previous lemma may not be sharp, though it is enough for the present purposes.)
    
    To prove the inclusion $\subset$, it is enough to show that if $f\in \cap_{s\ge 0} \wtH^s(\Dm)$, then for any $p,q\ge 0$, $\partial^p \dbar^q f\in C(\Dm)$. With $\ell$ a constant as in Lemma \ref{lem:tame}, since $f\in \wtH^{(p+q)\ell + 3}(\Dm)$, repeated use of Lemma \ref{lem:tame} gives that $\partial^p \dbar^q f\in \wtH^3(\Dm)$, and by Lemma \ref{lem:continuous}, this implies that $\partial^p \dbar^q f\in C(\Dm)$. Hence the result.      
\end{proof}

At this point, the conclusions of Theorem \ref{thm:main1}, Lemma \ref{lem:intersect} and Corollary \ref{cor:1} all hold for the Euclidean unit disk. 

\F{

\subsection{Mapping properties involving classical Sobolev scales}

In light of the result above, one might wonder how to tie these mapping estimates with more classical Sobolev scales (``classical'' in the sense that they are modeled over an elliptic $\Psi$DO). We now state two consequences describing what happens when using classical Sobolev spaces on the domain, or on the codomain. Define $H^1(\Dm)$ to be
\begin{align}
    H^1(\Dm):= \left\{u\in L^2(\Dm),\ (u,u)_{L^2(\Dm)} + \int_{\Dm} |\nabla u|^2 <\infty\right\}.
    \label{eq:H1}
\end{align}

\begin{proposition} \label{rmk:SZ}
    The operator $I_0^* I_0$ is not bounded from $L^2(\Dm)$ to $H^1(\Dm)$. 
\end{proposition}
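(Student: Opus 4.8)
The plan is to exploit the spectral picture for $I_0^* I_0$ and $\DD$ that has just been assembled. Since the $Z_{n,k}$ form an orthogonal basis of $L^2(\Dm)$ with $I_0^* I_0 Z_{n,k} = \tfrac{4\pi}{n+1} Z_{n,k}$, the image $I_0^* I_0 (L^2(\Dm))$ consists of those $g = \sum_{n,k} g_{n,k} \widehat{Z_{n,k}}$ with $\sum_{n,k} (n+1)^2 |g_{n,k}|^2 < \infty$, i.e.\ exactly $\wtH^{2}(\Dm)$ in the notation of \eqref{eq:Zernikev}. So to prove that $I_0^* I_0$ fails to map $L^2(\Dm)$ into $H^1(\Dm)$, it suffices to exhibit a single function in $\wtH^{2}(\Dm)$ that does \emph{not} lie in $H^1(\Dm)$; equivalently, a function $f \in L^2(\Dm)$ such that $\DD f \in L^2(\Dm)$ but $\int_\Dm |\nabla (\DD^{-1} \cdot)|^2 = \infty$. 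The cleanest route is to produce a target $g \in \wtH^2 \setminus H^1$ directly and then note $g = I_0^* I_0 (\tfrac{n+1}{4\pi}\text{-rescaled preimage})$, which lies in the image since the preimage is in $L^2$.

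The key step is therefore the construction of such a $g$. I would work within a fixed polar harmonic, say the radial sector $\omega$-independent one, where $Z_{n,0}$ (or whichever $k$) reduces to a univariate Zernike polynomial $R_n(\rho)$, and recall from \eqref{eq:Luu} that the $H^1$ seminorm controls $\int_\Dm (1-\rho^2)|\partial_\rho u|^2 \rho\, d\rho\, d\omega$ plus $\int_\Dm |\partial_\rho u|^2 \rho\, d\rho\, d\omega$ (up to the $\partial_\omega$ term which vanishes in this sector), whereas $\DD$ only sees the \emph{degenerate-weighted} Dirichlet form with weight $1-\rho^2$. The mismatch is precisely the factor $(1-\rho^2)$ that vanishes at $\rho = 1$: membership in $\wtH^2$ forces $\DD f \in L^2$, which via \eqref{eq:Luu} controls $\int (1-\rho^2)|\partial_\rho f|^2 \rho\, d\rho\, d\omega < \infty$ but puts no constraint preventing $|\partial_\rho f|^2$ from being non-integrable against $\rho\, d\rho$ near $\rho = 1$. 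Concretely, I would take $g = \sum_n g_n \widehat{Z_{n,0}}$ with coefficients $g_n$ decaying just fast enough that $\sum_n (n+1)^2 |g_n|^2 < \infty$ (so $g \in \wtH^2$) but $\sum_n (n+1)^3 |g_n|^2 = \infty$; the point is that the $\widehat{Z_{n,0}}$, being normalized in $L^2$ but growing in $H^1$ like $\|\widehat{Z_{n,0}}\|_{H^1}^2 \sim c(n+1)$ (a standard estimate for normalized Zernike/Jacobi polynomials, which I would verify via the known $L^2$-norm \eqref{eq:Znknorm} together with the recursion for $\partial_\rho R_n$ or a direct Jacobi-polynomial computation), forces $\|g\|_{H^1}^2 \gtrsim \sum_n (n+1) |g_n|^2 \cdot$(appropriate rescaling) — one must track the normalization carefully, but the upshot is that $H^1$-regularity corresponds to a \emph{different} (in fact lower, by the degeneracy) power of $(n+1)$ than $\wtH$-regularity does along pure radial modes. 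A choice such as $g_n \sim (n+1)^{-3/2} (\log(n+2))^{-1}$ separates the two.

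The main obstacle is pinning down the exact growth rate of $\|\widehat{Z_{n,k}}\|_{H^1(\Dm)}$ in $n$ and confirming it is genuinely smaller than $(n+1)^2$ (which would put $H^1$ strictly between $\wtH^1$ and $\wtH^2$ on radial modes, or at least show $\wtH^2 \not\subset H^1$). This requires either invoking the explicit Zernike/Jacobi formulas and a known asymptotic for $\int_0^1 |R_n'(\rho)|^2 \rho\, d\rho$, or deriving it from the operator identity: note $L = \DD^{\text{form}} - 1$ has $-L$ comparable to $-\Delta$ only away from $\partial\Dm$, and near $\rho=1$ one has $\DD \sim -(1-\rho^2)\partial_\rho^2 + \ldots$ versus $-\Delta \sim -\partial_\rho^2 + \ldots$, so $\DD$ is $(1-\rho^2)$ times weaker; in Jacobi-polynomial language $R_n$ is (up to change of variable $x = 2\rho^2-1$) a Jacobi polynomial $P_m^{(0,|\ell|)}$, and $\|P_m^{(0,|\ell|)}\|_{H^1}^2 / \|P_m^{(0,|\ell|)}\|_{L^2}^2 \sim c\, m^2$ against the \emph{flat} weight but the $\DD$-eigenvalue is $(n+1)^2 \sim (2m)^2$, so one must be careful — the resolution is that $\widehat{Z_{n,0}}$ is normalized in the flat $L^2(\rho\,d\rho)$, not the Jacobi weight, and the flat-weight $H^1$ norm of a flat-weight-normalized Jacobi polynomial grows like $n^2$ as well, \emph{but} the $\DD^{1/2}$ norm grows like $n$ only because of the extra $(1-\rho^2)$; hence $\|\widehat{Z_{n,0}}\|_{H^1}^2 \sim c\,(n+1)^3$ relative to $\|\DD^{1/2}\widehat{Z_{n,0}}\|^2 = (n+1)^2$, which is exactly the gap needed. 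I would verify this claim with a short direct computation using the differential equation satisfied by $R_n$ and \eqref{eq:Luu}, integrating by parts to relate $\int (1-\rho^2)|R_n'|^2\rho\,d\rho$ to $\int |R_n'|^2 \rho\,d\rho$ and extracting the extra factor of $n$; once that estimate is in hand, the counterexample $g_n \sim (n+1)^{-3/2}(\log(n+2))^{-1}$ makes $g \in \wtH^2 = I_0^* I_0(L^2(\Dm))$ but $\|g\|_{H^1}^2 \gtrsim \sum_n (n+1)^3 |g_n|^2 = \sum_n (n+1)^0 (\log(n+2))^{-2}$, which diverges, completing the proof.
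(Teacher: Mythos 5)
Your overall strategy is the same as the paper's: identify the image of $I_0^* I_0$ on the Zernike basis, and show that the $H^1(\Dm)$-norm of the normalized Zernike polynomials grows strictly faster than the eigenvalue $(n+1)$ of $\DD^{1/2}$, so that the image escapes $H^1$. (A side remark: the image of $L^2(\Dm)$ is the set of $g$ with $\sum_{n,k}(n+1)^2|g_{n,k}|^2<\infty$, as you write, but in the paper's convention \eqref{eq:Zernikev} that space is $\wtH^{1}(\Dm)$, not $\wtH^{2}(\Dm)$; this indexing slip is harmless.)

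The genuine gap is the step you yourself flag as ``the main obstacle'': the growth rate of $\|\widehat{Z_{n,k}}\|_{H^1}$. This rate depends essentially on $k$, and for the modes you name it comes out wrong. You identify ``the radial, $\omega$-independent sector'' with $Z_{n,0}$, but $Z_{n,0}=z^n=\rho^n e^{in\omega}$ is the \emph{extremal} angular mode, not the radial one; the radial modes are $Z_{2m,m}$. For $Z_{n,0}$ one has $\partial_z Z_{n,0}=n z^{n-1}$ and $\dbar Z_{n,0}=0$, hence $\|\widehat{Z_{n,0}}\|_{H^1}^2\asymp n(n+1)\asymp (n+1)^2$, which is \emph{comparable} to $\|\widehat{Z_{n,0}}\|_{\wtH^1}^2=(n+1)^2$ — the angular term $\rho^{-2}|\partial_\omega\cdot|^2$, present in both quadratic forms, dominates and washes out the $(1-\rho^2)$ degeneracy you are trying to exploit. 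Consequently your candidate $g=\sum_n (n+1)^{-3/2}(\log(n+2))^{-1}\widehat{Z_{n,0}}$ actually lies in $H^1(\Dm)$ and is not a counterexample. The claimed rate $\|\widehat{Z_{n,k}}\|_{H^1}^2\asymp (n+1)^3$ is correct only for $k$ comparable to $n/2$, and your discussion of why (which at one point asserts the flat-weight $H^1$ norm grows like $n^2$ and then concludes $n^3$) never pins this down. The paper's proof supplies exactly the missing computation: from \eqref{eq:diffZnk} and orthogonality, $\|\partial_z Z_{n,k}\|^2=\pi(P_{n,k}+1)(n-P_{n,k})$, which equals $\pi n$ at $k=0$ but $\pi n(n+2)/4$ at $k=n/2$; choosing $k=n/2$ then gives $\|\widehat{Z_{n,n/2}}\|_{H^1}^2\gtrsim (n+1)^3$ versus $\|\widehat{Z_{n,n/2}}\|_{\wtH^1}^2=(n+1)^2$, and unboundedness follows. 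With that substitution (use $Z_{n,n/2}$, $n$ even, in place of $Z_{n,0}$, and verify the $k$-dependence of the derivative norms), your argument closes.
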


The above means in particular that when incorporating behavior at the boundary, the scales of spaces required need not be the classical ones, in spite of the fact that $I_0^* I_0$ is a classical elliptic $\Psi$DO of order $-1$ in the interior of $\Dm$. This is in stark contrast with the non-compact Euclidean case. 

\begin{proof}[Proof of Proposition \ref{rmk:SZ}]
    From \eqref{eq:Luu}, we immediately see that
    \begin{align*}
	\|u\|_{\wtH^1(\Dm)} = (\DD u, u)_{L^2(\Dm)} \lesssim \|u\|_{H^1(\Dm)}, 
    \end{align*}
    so that $H^1(\Dm) \subset \wtH^1(\Dm)$. This inclusion is strict however, as can be seen from the following calculation. Using \eqref{eq:diffZnk} and orthogonality of the Zernike basis,
    \begin{align*}
	\|\partial_z Z_{n,k}\|^2 = \sum_{p=0}^{P_{n,k}} (n-2p)^2 \|Z_{n-1-2p,k-p}\|^2 &= \pi \sum_{p=0}^{P_{n,k}} (n-2p) \\
	&= \pi (P_{n,k}+1)(n - P_{n,k}),
    \end{align*}
    with $P_{n,k}$ defined in \eqref{eq:Pnk}. In particular, for $n$ even and $k = \frac{n}{2}$, $P_{n,\frac{n}{2}} = \frac{n}{2}-1$ and thus 
    \begin{align*}
	\|Z_{n,k}\|^2_{H^1} \ge \|\partial_z Z_{n,k}\|_{L^2}^2 = \pi \frac{n(n+2)}{4}
    \end{align*}
    On the other hand, we have 
    \begin{align*}
	\|Z_{n,k}\|_{\wtH^1} = \|\DD^{1/2} Z_{n,k}\|_{L^2} = (n+1) \|Z_{n,k}\|_{L^2} \stackrel{\eqref{eq:Znknorm}}{=} \pi \sqrt{n+1},
    \end{align*}
    so $\sup_{n,k} \|Z_{n,k}\|_{H^1}/\|Z_{n,k}\|_{\wtH^1} = \infty$.

    By Corollary \ref{cor:1}, since $\adjmu I_0 (L^2(\Dm)) = \wtH^1(\Dm) \supsetneq H^1(\Dm)$, this implies the unboundedness of the operator $\adjmu I_0 \colon L^2(M) \to H^1(\Dm)$.    
\end{proof}

Finally, we discuss the possibility of writing continuity statements for $I_0$ on scales of spaces on $\partial_+ SM$ which control regularity along all directions. To do so, first note that every vector field on $\partial_+ SM$ is a linear combination of $T = \partial_\beta-\partial_\alpha$ and $\partial_\beta$, so the classical Sobolev scale on $\partial_+ SM$ can be defined using the elliptic operator $-(T^2 + \partial_\beta^2)$, which also acts diagonally on the $\psi_{n,k}$ basis of $L^2_+(\partial_+ SM)$ as 
\begin{align*}
    -(T^2 + \partial_\beta^2) \psi_{n,k} = ( (n+1)^2 + (n-2k)^2) \psi_{n,k}, \qquad n\ge 0, \qquad k\in \Zm.  
\end{align*}
One may then define 
\begin{align}
    H_+^{s} (\partial_+ SM) := \{u\in L^2_+ (\partial_+ SM), \quad (-(T^2 + \partial_\beta^2))^{s/2} u \in L^2_+ (\partial_+ SM)\}, \qquad s\ge 0.
    \label{eq:classic}
\end{align}

\begin{proposition}\label{rmk:otherscales} Fix any $s\ge 0$. Then the following hold: 

    (i) $H^s_{+} (\partial_+ SM) \subsetneq H^s_{T,+}(\partial_+ SM)$.  

    (ii) The topologies $H_+^s(\partial_+ SM)$ and $H_{T,+}^s (\partial_+ SM)$ are equivalent on the range of $I_0$. In particular, the operator $I_0\colon \wtH^s(\Dm)\to H_+^{s+1/2} (\partial_+ SM)$ is bounded.
    
\end{proposition}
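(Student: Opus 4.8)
The plan is to work entirely at the level of the orthogonal bases $\{\psi_{n,k}\}$ and $\{\widehat{Z_{n,k}}\}$, since both the relevant Sobolev scales and the range of $I_0$ admit clean descriptions there. For part (i), I would observe that $H^s_+(\partial_+SM)$ is the space of $u = \sum_{n,k} u_{n,k}\widehat{\psi_{n,k}}$ with $\sum_{n,k}\big((n+1)^2+(n-2k)^2\big)^s |u_{n,k}|^2<\infty$, whereas $H^s_{T,+}(\partial_+SM)$ only asks for $\sum_{n,k}(n+1)^{2s}|u_{n,k}|^2<\infty$. Since $(n+1)^2 \le (n+1)^2+(n-2k)^2$, the inclusion $H^s_+\subset H^s_{T,+}$ is continuous and obvious; strictness follows by exhibiting a counterexample, e.g. fixing a sequence $(n_j)$ with $|n_j-2k_j|\sim n_j$ (take $k_j=0$, so $n-2k=n$) and choosing $u_{n_j,0}$ so that $\sum_j (n_j+1)^{2s}|u_{n_j,0}|^2<\infty$ but $\sum_j (n_j+1)^{2s}\cdot 2^s n_j^{2s}|u_{n_j,0}|^2 = +\infty$ is not what we want — rather, choose weights making the $H^s_{T,+}$ sum finite but the $H^s_+$ sum infinite, which is possible precisely because the ratio $\big((n+1)^2+(n-2k)^2\big)/(n+1)^2$ is unbounded over the index set (it blows up along $k=0$). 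This is a routine $\ell^2$-weight argument.

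For part (ii), the key input is Theorem \ref{thm:main2}: the range of $I_0$ on $\wtH^s(\Dm)$ is exactly $\{w \in H^{s+1/2}_{T,+} : C_- w = 0\}$, and $C_-$ is diagonalized in the $\psi_{n,k}$ basis. From the SVD (Theorem \ref{thm:SVD}), $I_0$ annihilates $\psi_{n,k}$ for $k\notin[0,n]$ and is an isomorphism (up to the scalars $a_{n,k}$) from $\mathrm{span}\{\widehat{Z_{n,k}}\}$ onto $\mathrm{span}\{\widehat{\psi_{n,k}}: 0\le k\le n\}$; hence the range of $I_0$ is spanned by $\{\widehat{\psi_{n,k}} : n\ge 0,\ 0\le k\le n\}$, i.e. exactly those indices with $0\le k\le n$. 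The point is that on this restricted index set the two weights are comparable: for $0\le k\le n$ one has $|n-2k|\le n$, so
\begin{align*}
    (n+1)^2 \ \le\ (n+1)^2 + (n-2k)^2 \ \le\ (n+1)^2 + n^2 \ \le\ 2(n+1)^2,
\end{align*}
and therefore $\big((n+1)^2+(n-2k)^2\big)^s$ and $(n+1)^{2s}$ differ by a factor bounded between $1$ and $2^s$. Consequently, for any $u$ supported (in the $\psi_{n,k}$ expansion) on indices with $0\le k\le n$ — in particular for any $u$ in the range of $I_0$ — the norms $\|u\|_{H^s_+}$ and $\|u\|_{H^s_{T,+}}$ are equivalent, uniformly. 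Combining this equivalence on the range with the sharp isometry \eqref{eq:sharp}, $\|f\|_{\wtH^s(\Dm)} = c_{\kappa,R}^{-1/2}\|I_0(wf)\|_{H^{s+1/2}_{T,+}}$ (in the Euclidean case $w\equiv 1$, $c = 4\pi$), and noting $f\mapsto wf$ is bounded on $\wtH^s$ since $w$ is smooth and non-vanishing, yields $\|I_0 f\|_{H^{s+1/2}_+} \lesssim \|I_0 f\|_{H^{s+1/2}_{T,+}} = \sqrt{c_{\kappa,R}}\,\|f\|_{\wtH^s} \lesssim \|f\|_{\wtH^s}$, which is the asserted boundedness of $I_0\colon \wtH^s(\Dm)\to H^{s+1/2}_+(\partial_+SM)$.

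The only mild subtlety — and the step I would be most careful with — is making sure that "membership in the range of $I_0$" really does force the $\psi_{n,k}$-expansion to be supported on $0\le k\le n$, including at the level of limits rather than just finite combinations: this needs the $L^2$-density of $\{\widehat{Z_{n,k}}\}$ in $L^2(\Dm)$ together with the closedness considerations implicit in Theorem \ref{thm:main2}, so that the condition $C_- w = 0$ combined with $w\in L^2_+$ is genuinely equivalent to $w\in\overline{\mathrm{span}}\{\widehat{\psi_{n,k}}:0\le k\le n\}$. Once that is in hand, everything reduces to the elementary two-sided weight bound displayed above, and no further analytic input is required.
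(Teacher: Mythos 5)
Your overall strategy is the same as the paper's: compare the two weights $\bigl((n+1)^2+(n-2k)^2\bigr)^s$ and $(n+1)^{2s}$ on the $\psi_{n,k}$ basis, get the continuous inclusion $H^s_+\subset H^s_{T,+}$ from the trivial inequality between them, and get part (ii) from the observation that on the range of $I_0$ one has $0\le k\le n$, hence $|n-2k|\le n$ and the weights are comparable within a factor $2^s$, after which \eqref{eq:sharp} gives the boundedness of $I_0\colon \wtH^s(\Dm)\to H^{s+1/2}_+(\partial_+ SM)$. That part is correct and matches the paper essentially line for line (the paper phrases it as the ratio $\|\psi_{n,k}\|_{H^s_{T,+}}/\|\psi_{n,k}\|_{H^s_+}=\bigl(1+\bigl(\tfrac{n-2k}{n+1}\bigr)^2\bigr)^{-s/2}$ being bounded below by $2^{-s/2}$ when $|n-2k|\le n$). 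Your extra worry about limits versus finite combinations is already absorbed by the diagonal action of $C_-$ and Theorem \ref{thm:main2}, and the multiplication by $w$ is moot since the proposition lives in the Euclidean section where $w\equiv 1$.

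There is, however, a concrete error in your counterexample for the strictness in (i). You locate the blow-up of the ratio $\bigl((n+1)^2+(n-2k)^2\bigr)/(n+1)^2$ ``along $k=0$'', but along $k=0$ one has $n-2k=n$ and the ratio equals $1+n^2/(n+1)^2\le 2$: it is bounded there, so any $u$ supported on $\{k=0\}$ (or more generally on $\{0\le k\le n\}$) has equivalent $H^s_+$ and $H^s_{T,+}$ norms and cannot witness strictness. Indeed this is forced by your own part (ii): the two norms agree on the range of $I_0$, so the strict inclusion must be exhibited on indices \emph{outside} $[0,n]$. The correct direction — the one the paper uses — is to fix $n$ and let $k\to\infty$ (recall $k$ ranges over all of $\Zm$ in the basis \eqref{eq:psink} of $L^2_+$), so that $(n-2k)^2/(n+1)^2\to\infty$ while the $H^s_{T,+}$ weight $(n+1)^{2s}$ stays fixed; e.g.\ for $n=0$ and $s>0$, coefficients $u_{0,k}=|k|^{-1/2-\varepsilon}$ with $0<\varepsilon<s$ lie in $H^s_{T,+}$ but not in $H^s_+$. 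With that repair the argument is complete.
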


\begin{proof}
    To prove (i), it suffices to notice the equality
    \begin{align}
	\frac{\|\psi_{n,k}\|_{H^s_{T,+}}}{\|\psi_{n,k}\|_{H^s_+}} = \left( 1 + \left(\frac{n-2k}{n+1}\right)^2 \right)^{-s/2}, \qquad n\ge 0, \quad k\in\Zm, \quad s\ge 0. 
	\label{eq:ratio}
    \end{align}
    Since the right hand side is bounded above by $1$, this implies $H^s_{+} (\partial_+ SM) \subset H^s_{T,+}(\partial_+ SM)$. However the converse inclusion is not true in general since the above right hand side can become arbitrarily close to zero as $k\to \infty$ while keeping $n$ fixed.

    To prove (ii), notice that on the range of $I_0$, spanned by those $\psi_{n,k}$ for which we have $|n-2k|\le n$, the right hand side in \eqref{eq:ratio} becomes bounded below by $2^{-s/2}$. The continuity statement follows by combining the estimate \eqref{eq:sharp} with the fact that the inclusion $H^{s+1/2}_{T,+}(\partial_+ SM) \to H^{s+1/2}_{+} (\partial_+ SM)$ is bounded when restricted to the range of $I_0$.
\end{proof}








}

\section{Simple geodesic disks of constant curvature} \label{sec:CCD}

Given $\kappa\in \Rm$ and $R>0$ such that $R^2 |\kappa|<1$, let us now equip $\Dm_R:=\{(x,y)\in \Rm^2: x^2+y^2\le R^2\}$ with the metric $g_\kappa(z) := \left( 1 +\kappa|z|^2 \right)^{-2} |dz|^2$, of constant curvature $4\kappa$. We denote $S_{(\kappa)}\Dm_R$ the unit tangent bundle
\begin{align*}
    S_{(\kappa)}\Dm_R = \{(x,v) \in T\Dm_R,\quad (g_\kappa)_x(v,v) = 1\},
\end{align*}
with inward boundary $\partial_+ S_{(\kappa)}\Dm_R$ defined as usual. The latter is parameterized in fan-beam coordinates $(\beta,\alpha)\in \Sm^1\times (-\pi/2,\pi/2)$, where $\beta$ describes the boundary point $x = R e^{i\beta}$, and $\alpha$ describes the angle of the tangent vector with respect to the unit inner normal $\nu_x$, i.e. $v = (1+R^2\kappa) e^{i(\beta+\pi+\alpha)}$. The manifold $\partial_+ S_{(\kappa)}\Dm_R$ is a model for all geodesics on $\Dm_R$ intersecting $\partial \Dm_R$ transversally, equipped with the measure $d\Sigma^2 = R (1+R^2\kappa)^{-1} d\beta\ d\alpha$.

\subsection{Intertwining diffeomorphisms}

Consider the X-Ray transform $I_0 \colon L^2(\Dm_R, \dvolk) \to L^2 (\partial_+ S_{(\kappa)}\Dm_R, d\Sigma^2)$
\begin{align*}
    I_0 f (\beta,\alpha) := \int_0^{\tau(\alpha)} f(\gamma_{\beta,\alpha}(t))\ dt,
\end{align*}
where $\gamma_{\beta,\alpha}$ is the unit-speed $g_\kappa$-geodesic passing through $(R e^{i\beta}, c_\kappa(R) e^{i(\beta+\alpha+\pi)})$, and where $\tau(\alpha)$ is its exit time out of $M$.

Given a point $(\rho e^{i\omega}, \theta)\in S\Dm$, denote $(\beta_-(\rho e^{i\omega}, \theta), \alpha_- (\rho e^{i\omega}, \theta))$ the fan-beam coordinates of the unique unit-speed geodesic passing through $(\rho e^{i\omega}, \theta)$, or 'footpoint map'. Our reference case is for $\kappa = 0$ and $R=1$, for which we will denote $I_0^e$ and $(\beta_-^e, \alpha_-^e)$. In particular, $(\beta_-^e,\alpha_-^e)$ are uniquely defined by the relations 
\begin{align}
    \beta^e_-(\rho,\theta) + \alpha^e_- (\rho,\theta) + \pi = \theta, \qquad \sin \alpha^e_- (\rho,\theta) = -\rho\sin\theta, \qquad \rho\in [0,1],\quad \theta\in \Sm^1.
    \label{eq:btale}
\end{align}
The adjoint of $I_0\colon L^2(\Dm, \dvolk)\to L^2_\mu (\partial_+S\Dm)$ is now defined as
\begin{align*}
    I_0^{\sharp} h (\rho e^{i\omega}) &:= \int_{\Sm^1} h(\beta_-(\rho e^{i\omega}, \theta), \alpha_-(\rho e^{i\omega}, \theta))\ d\theta \\
    &= \int_{\Sm^1} h(\omega + \beta_-(\rho, \theta), \alpha_-(\rho, \theta))\ d\theta, 
\end{align*}
where the last equality follows from the symmetry property
\begin{align*}
    \beta_-(\rho e^{i\omega}, \theta) = \beta_-(\rho,\theta-\omega) + \omega,\qquad \alpha_- (\rho e^{i\omega}, \theta) = \alpha_-(\rho, \theta-\omega).
\end{align*}
And we have the relation $\adjmu = \adj \frac{1}{\mu}$ between the adjoints for the different codomain topologies.

In the recent article \cite{Mishra2019}, it was proved that the SVD of $\adjmu$ for the case $R=1$ could be obtained from the SVD of $(\adjmu)^e$ via specific changes of variables. We now make this relation hold directly at the level of the operators and show that this actually holds for any $\kappa$ and $R$ such that $R^2 |\kappa|<1$. Define the map $\ss\colon \Sm^1 \to \Sm^1$
\begin{align}
    \ss (\alpha) := \tan^{-1} \left( \frac{1-R^2\kappa}{1+R^2\kappa}\tan \alpha \right),
    \label{eq:skappa}
\end{align}
first defined for $|\alpha|\le \frac{\pi}{2}$ and extended as a $\pi$-periodic function. We can regard $\ss$ as a map $\ss\colon \partial S\Dm_R \to \partial S\Dm_R$ where, abusing notation $\ss(\beta,\alpha) := (\beta,\ss(\alpha))$. This map is such that the scattering relation $\SS$ and antipodal scattering relation $\SS_A$ for $(\Dm_R, g_\kappa)$ are given by 
\begin{align*}
    \SS(\beta,\alpha) = (\beta+\pi + 2\ss(\alpha), \pi-\alpha), \qquad \SS_A(\beta,\alpha) = (\beta+\pi + 2\ss(\alpha), -\alpha). 
\end{align*}
The proof of this is a similar calculation to \cite[Section 2.2]{Mishra2019}: the $g_\kappa$-geodesic passing through the point $(R, (1+\kappa R^2) e^{i(\pi+\alpha)})\in \partial_+ S\Dm_R$ takes the (non-unit speed) form $T(x) = \frac{R-x e^{i\alpha}}{1+R\kappa e^{i\alpha} x}$ for $x$ in some real open interval. Solving $|T(x)| = R$ gives two roots $x=0$ and $x^* >0$, and one finds that $T(x^*) = R e^{i (\pi + 2\ss(\alpha))}$ with $\ss(\alpha)$ defined in \eqref{eq:skappa}. 

The following result contains many of the tedious calculations required. Define the map $\Psi \colon [0,R]\times \Sm^1 \to [0,1]\times \Sm^1$ by 
\begin{align}
    (\rho,\theta) \mapsto \left( \rho' = \Phi(\rho) := \frac{1-\kappa R^2}{1-\kappa\rho^2} \frac{\rho}{R}, \quad \theta' = \theta - \tan^{-1} \left(\frac{\kappa\rho^2 \sin(2\theta)}{1+\kappa\rho^2 \cos(2\theta)}\right) \right)
    \label{eq:Psi}
\end{align}
$\Phi$ can also be thought of as the diffeomorphism $\Phi \colon \Dm_R\to \Dm_1$ upon defining $\Phi (\rho e^{i\omega}) := \rho' e^{i\omega}$. Similarly, one should think of $\Psi$, augmented accordingly, as a global diffeomorphism from $S_{(\kappa)} \Dm_R$ onto $S_{(0)}\Dm_1$ given by $\Psi (\rho e^{i\omega}, \theta) = (\rho' e^{i\omega}, \omega + \theta')$.

\begin{lemma}\label{lem:crux}
    With $\Psi\colon (\rho,\theta)\to (\rho',\theta')$, $\Phi$, $\ss$ defined as above, we have 
    \begin{align}
	(\beta_- (\rho,\theta), \ss(\alpha_-(\rho,\theta))) = (\beta_-^e (\rho',\theta'), \alpha_-^e (\rho',\theta')), \qquad \forall (\rho,\theta)\in [0,R]\times \Sm^1.
	\label{eq:geometries}
    \end{align}
    In short, we have $\ss \circ F = F_e \circ \Psi$, where $F\colon S_{(\kappa)}\Dm_R \to \partial_+ S_{(\kappa)}\Dm_R$ and $F_e \colon S_{(0)} \Dm_1\to \partial_+ S_{(0)} \Dm_1$ denote the footpoint maps. In addition, the following relation holds: 
    \begin{align}
	\frac{\partial \theta'}{\partial\theta} = \frac{1-\kappa\rho^2}{1+\kappa\rho^2} \frac{1+\kappa R^2}{1-\kappa R^2}\ \ss'(\alpha_-(\rho,\theta)), \label{eq:jactheta}
    \end{align}    
\end{lemma}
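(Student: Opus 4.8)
\textbf{Proof plan for Lemma \ref{lem:crux}.}

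The plan is to verify \eqref{eq:geometries} by a direct computation, exploiting the rotation-invariance to reduce to the case $\omega=0$, and then to obtain \eqref{eq:jactheta} by differentiating the second component of $\Psi$ and re-expressing everything in terms of $\ss'$. First I would recall the explicit geodesic through $(R,(1+\kappa R^2)e^{i(\pi+\alpha)})$, namely the Möbius image $T(x) = \frac{R - xe^{i\alpha}}{1+R\kappa e^{i\alpha}x}$, already introduced in the excerpt to compute the scattering relation. Up to the rotation symmetry $\beta_-(\rho e^{i\omega},\theta) = \beta_-(\rho,\theta-\omega)+\omega$ and $\alpha_-(\rho e^{i\omega},\theta)=\alpha_-(\rho,\theta-\omega)$, it suffices to take a point $z = \rho$ on the positive real axis and a direction $\theta$, find the unique $g_\kappa$-geodesic through $(\rho,\theta)$, read off its fan-beam coordinates $(\beta_-,\alpha_-)$ by intersecting with $\partial\Dm_R$, and check that applying $\ss$ to $\alpha_-$ and comparing with the Euclidean footpoint data of the transformed point $\Psi(\rho,\theta)=(\rho',\theta')$ yields the identity. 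Concretely, a geodesic of $(\Dm_R,g_\kappa)$ is (a piece of) a generalized circle; writing it as the image under the Möbius map $z\mapsto \frac{z}{R}$ composed with the standard parametrization should reduce the curved computation to the Euclidean defining relations \eqref{eq:btale}, $\beta^e_-+\alpha^e_-+\pi = \theta'$ and $\sin\alpha^e_- = -\rho'\sin\theta'$. The first relation will be automatic from the angle bookkeeping; the second is where the specific form of $\Phi$ and the angular shift in the definition of $\theta'$ must conspire correctly.

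For \eqref{eq:jactheta} I would differentiate $\theta' = \theta - \tan^{-1}\!\big(\frac{\kappa\rho^2\sin 2\theta}{1+\kappa\rho^2\cos 2\theta}\big)$ with respect to $\theta$ at fixed $\rho$. A short manipulation of the arctangent — using that $\frac{d}{d\theta}\arg(1+\kappa\rho^2 e^{2i\theta}) = \mathrm{Im}\,\frac{2i\kappa\rho^2 e^{2i\theta}}{1+\kappa\rho^2 e^{2i\theta}}$ — gives $\frac{\partial\theta'}{\partial\theta} = \frac{1-\kappa^2\rho^4}{1+2\kappa\rho^2\cos 2\theta + \kappa^2\rho^4}$, a clean rational expression in $\cos 2\theta$. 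On the other hand, from \eqref{eq:skappa} one computes $\ss'(\alpha) = \frac{1-R^2\kappa}{1+R^2\kappa}\cdot\frac{1}{\cos^2\alpha + \left(\frac{1-R^2\kappa}{1+R^2\kappa}\right)^2\sin^2\alpha}$, and substituting $\alpha = \alpha_-(\rho,\theta)$ via the relation $\sin\alpha_-(\rho,\theta) = -\rho\sin\theta$ (the $g_\kappa$-analogue of \eqref{eq:btale}, which falls out of the same geodesic computation) converts this into another rational expression in $\cos 2\theta = 1 - 2\sin^2\theta$. Then \eqref{eq:jactheta} becomes the assertion that the two rational functions agree after multiplication by the stated constant $\frac{1-\kappa\rho^2}{1+\kappa\rho^2}\frac{1+\kappa R^2}{1-\kappa R^2}$; this is a finite identity one clears denominators and checks.

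The main obstacle is the first part: setting up the curved geodesic in coordinates cleanly enough that the change of variables $\Psi$ emerges rather than having to be reverse-engineered. The safest route is to observe that $g_\kappa = (1+\kappa|z|^2)^{-2}|dz|^2$ is the pullback of a round/hyperbolic metric of constant curvature under stereographic-type coordinates, so its geodesics are exactly the circles meeting $\partial\Dm_R$ in the appropriate generalized-orthogonality sense; alternatively one uses that the Möbius maps $T$ above are $g_\kappa$-isometries fixing $\partial\Dm_R$ pointwise-as-a-set, so that the whole family of geodesics is the $T$-orbit of the diameters. Identifying which geodesic passes through $(\rho,\theta)$ and tracking its two boundary intersection points then pins down $\beta_-$ and $\alpha_-$, and matching against the Euclidean picture through the radial reparametrization $\rho\mapsto\rho'=\Phi(\rho)$ is the crux. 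Everything after that — the Jacobian identity \eqref{eq:jactheta}, and the ``in short'' reformulation $\ss\circ F = F_e\circ\Psi$ — is bookkeeping once \eqref{eq:geometries} is in hand. I would also record, as a by-product of the computation, the relation $\sin\alpha_-(\rho,\theta) = -\rho\sin\theta$ and $\beta_- + \alpha_- + \pi = \theta$ valid for all $\kappa,R$, since these are needed both here and, presumably, in the sections that follow.
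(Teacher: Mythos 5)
Your overall strategy coincides with the paper's: reduce \eqref{eq:geometries} to verifying the two Euclidean defining relations \eqref{eq:btale} in the primed variables, using the M\"obius parametrization of $g_\kappa$-geodesics, and then check \eqref{eq:jactheta} as an identity between rational functions of $\cos 2\theta$. Your direct computation of $\partial\theta'/\partial\theta = \frac{1-\kappa^2\rho^4}{1+2\kappa\rho^2\cos 2\theta+\kappa^2\rho^4}$ and your formula for $\ss'(\alpha)$ are both correct (the paper instead imports the Jacobian from the $R=1$ case of \cite{Mishra2019} via the substitution $(\kappa,\rho)\mapsto(\kappa R^2,\rho/R)$, so your derivation is if anything more self-contained).

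There is, however, a genuine error in the one geometric input your plan relies on. The relations you assert "fall out of the same geodesic computation" and are "valid for all $\kappa,R$" --- namely $\sin\alpha_-(\rho,\theta)=-\rho\sin\theta$ and $\beta_-+\alpha_-+\pi=\theta$ --- are the Euclidean relations and are false for $\kappa\neq 0$. The correct curved analogues, obtained in the paper from the generalized law of sines on the geodesic triangle with vertices $o$, $\rho$, $Re^{i\beta_-}$, and from computing $T(x_-)T(x_+)$ in two ways, are
\begin{align*}
\sin\alpha_-(\rho,\theta) = -\frac{1+\kappa R^2}{1+\kappa\rho^2}\,\frac{\rho}{R}\,\sin\theta, \qquad
\beta_- + \ss(\alpha_-) + \pi = \theta - \tan^{-1}\!\Big(\tfrac{\kappa\rho^2\sin 2\theta}{1+\kappa\rho^2\cos 2\theta}\Big) = \theta'.
\end{align*}
This is not a cosmetic point: writing $c=\frac{1-\kappa R^2}{1+\kappa R^2}$, one has $\ss'(\alpha_-)=\frac{c}{1-(1-c^2)\sin^2\alpha_-}$ with $1-c^2=\frac{4\kappa R^2}{(1+\kappa R^2)^2}$, and only with the correct $\sin\alpha_-$ do the factors $(1+\kappa R^2)^2$ cancel so that the denominator collapses to $\frac{1+2\kappa\rho^2\cos2\theta+\kappa^2\rho^4}{(1+\kappa\rho^2)^2}$, yielding $\ss'(\alpha_-)=\frac{1-\kappa R^2}{1+\kappa R^2}\frac{(1+\kappa\rho^2)^2}{1+2\kappa\rho^2\cos2\theta+\kappa^2\rho^4}$ and hence \eqref{eq:jactheta}. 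Substituting $\sin^2\alpha_-=\rho^2\sin^2\theta$ instead produces an affine function of $\cos2\theta$ in the denominator whose ratio of coefficients is wrong for generic $\rho$, so no constant prefactor rescues the identity. The same conformal factors are needed to verify the second Euclidean relation $\sin(\ss(\alpha_-))=-\rho'\sin\theta'$, which additionally requires relating $\sin\theta$ to $\sin\theta'$ (the paper's \eqref{eq:sintheta}) --- a step your plan omits. Once these two curved relations are in place, the rest of your outline goes through as described.
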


\begin{proof} Given that the Euclidean footpoint map is uniquely determined by the relations \eqref{eq:btale}, equation \eqref{eq:geometries} will be established once we can show that 
    \begin{align}
	\beta_-(\rho,\theta) + \ss (\alpha_-(\rho,\theta)) + \pi = \theta', \qquad \sin (\ss(\alpha_-(\rho,\theta))) = -\rho' \sin\theta'.
	\label{eq:wts}
    \end{align}
    To this end, we first prove that 
    \begin{align}
	\beta_-(\rho,\theta) + \ss(\alpha_-(\rho,\theta)) + \pi = \theta - \tan^{-1} \left(\frac{\kappa\rho^2 \sin(2\theta)}{1+\kappa\rho^2 \cos(2\theta)}\right).
	\label{eq:bmam}
    \end{align}

    \begin{proof}[Proof of \eqref{eq:bmam}]
	The proof is similar to \cite[Lemma 13]{Mishra2019}, done here for general $R$. Given $(\rho,\theta)$, the unique $g_\kappa$-geodesic passing through $(\rho, c_\kappa(\rho)e^{i\theta})$ takes the form $T(x) = \frac{e^{i\theta} x + \rho}{1-\kappa e^{i\theta} \rho x}$ for $x$ in an open real interval. The endpoints $x_{\pm}$ are solved for \F{by} writing $|T(x)|^2 = R^2$. Note also that $T(x_-) = R e^{i\beta_-}$ and $T(x_+) = R e^{i(\beta_- + 2\ss(\alpha_-) + \pi)}$, so that, computing $T(x_-) T(x_+)$ in two ways, one obtains the relation
	\begin{align*}
	    -R^2 e^{2i(\beta_- + \ss(\alpha_-))} = - R^2 e^{2i\theta} \frac{1+ \kappa\rho^2 e^{-2i\theta}}{1+\kappa\rho^2 e^{2i\theta}}.
	\end{align*}
	One then deduces \eqref{eq:bmam} by comparing arguments, and using that in the test case, the offset of $\pi$ in the left hand side of \eqref{eq:bmam} is determined from looking at the reference case. 
    \end{proof}

    We next prove that 
    \begin{align}
	\sin(\alpha_-) = - \frac{1+\kappa R^2}{1+\kappa \rho^2} \frac{\rho}{R} \sin\theta
	\label{eq:sinam1}
    \end{align}
    
    \begin{proof}[Proof of \eqref{eq:sinam1}] The proof is similar to \cite[Lemma 13]{Mishra2019}, done here for general $R$. Upon defining, for $\kappa\in \Rm$, 
	\begin{align*}
	    \sin_{4\kappa} (x) := x - \frac{(4\kappa)x^3}{3!} + \frac{(4\kappa)^2 x^4}{5!} - \frac{(4\kappa)^3 x^7}{7!} + \dots,
	\end{align*}
	one may deduce using trigonometric identities that $\sin_{4\kappa} (d_\kappa(\rho,o)) = \frac{\rho}{1+\kappa\rho^2}$. Next apply the generalized law of sines to the geodesic triangle with vertices $o$, $\rho$ and $R e^{i\beta_- (\rho,\theta)}$, to make appear
	\begin{align*}
	    \frac{\sin(-\alpha_- (\rho,\theta))}{\sin_{4\kappa} (d_\kappa(\rho,\theta))} = \frac{\sin\theta}{ \sin_{4\kappa} (d_\kappa (R,0))}. 
	\end{align*}
	Equality \eqref{eq:sinam1} follows.
    \end{proof}

    \F{To simplify the equations below, we substitute} $\lambda := R^2 \kappa$ and $\rho_R := \frac{\rho}{R}$. In particular, \eqref{eq:sinam1} reads
    \begin{align}
	\sin(\alpha_-) = - \frac{1+\lambda}{1+\lambda \rho_R^2} \rho_R \sin\theta. 
	\label{eq:sinam2}
    \end{align}

    To obtain an equation for $\sin(\ss(\alpha_-))$ instead of for $\sin \alpha_-$ in \eqref{eq:sinam2}, notice that the relation between $\sin(\alpha)$ and $\sin (\ss(\alpha))$ is the same as \cite{Mishra2019} upon substituting $\kappa$ into $\lambda$. The following identities are thus the same calculation as to obtain \cite[Eq. (15)]{Mishra2019}, which now reads: 
    \begin{align}
	\sin(\ss(\alpha)) = \sqrt{\frac{1-\lambda}{1+\lambda}} \sqrt{\ss'(\alpha)} \sin \alpha, \qquad \cos(\ss(\alpha)) = \sqrt{\frac{1-\lambda}{1+\lambda}} \sqrt{\ss'(\alpha)} \cos \alpha, \qquad \lambda = \kappa R^2.
	\label{eq:trig}
    \end{align}
    The identity for the sines combined with \eqref{eq:sinam2} gives
    \begin{align}
	\frac{\sin (\ss(\alpha_-))}{\sqrt{\ss'(\alpha_-)}} = -\frac{\sqrt{1-\lambda^2}}{1+\lambda\rho_R^2} \rho_R \sin \theta. 	
	\label{eq:sinam}
    \end{align}
    We move to the properties of the fiber variable $\theta'$ given by
    \begin{align}
	\theta'(\rho,\theta) := \theta - \tan^{-1} \left(\frac{\kappa\rho^2 \sin(2\theta)}{1+\kappa\rho^2 \cos(2\theta)}\right) = \theta - \tan^{-1} \left(\frac{\lambda\rho_R^2 \sin(2\theta)}{1+\lambda\rho_R^2 \cos(2\theta)}\right),
	\label{eq:thetap}
    \end{align}
    In the case $R = 1$, the Jacobian $\frac{\partial\theta'}{\partial \theta}$ is computed in \cite[Lemma 14]{Mishra2019} as well as an identity relating $\sin\theta$ and $\sin\theta'$. To obtain the present case of general $R$, it suffices to notice that all derivations are formally identical upon changing $(\kappa,\rho)$ into $(\lambda,\rho_R)$. One thus finds that
    \begin{align}
	\frac{\partial \theta'}{\partial\theta} &= \frac{1-\lambda\rho_R^2}{1+\lambda\rho_R^2} \frac{1+\lambda}{1-\lambda} \ss'(\alpha_-), \label{eq:jacthetat} \\
	\sin\theta' &= \frac{1-\lambda\rho_R^2}{1+\lambda\rho_R^2} \sqrt{\frac{1+\lambda}{1-\lambda}} \sqrt{ \ss'(\alpha_-)} \sin\theta. \label{eq:sintheta}
    \end{align}
    From \eqref{eq:jacthetat}, Equation \eqref{eq:jactheta} follows. Combining \eqref{eq:sintheta} with \eqref{eq:sinam}, we also have the relation
    \begin{align*}
	\sin(\ss(\alpha_-)) = - \frac{1-\lambda}{1-\lambda\rho_R^2} \rho_R\sin \theta' = -\rho' \sin\theta'.
    \end{align*}
    Together with \eqref{eq:bmam} and the definition of $\theta'$, we see that \eqref{eq:wts} is fulfilled and thus Lemma \ref{lem:crux} is proved.     
\end{proof}

The diffeomorphisms above are essentially all we need to show that $\adjmu$ and $(\adjmu)^e$ are in fact intertwined via changes of variables and their jacobians. Below, given a smooth diffeomorphism $h:X\to Y$, we denote $h^*$ the induced {\em pull-back operator} $C^\infty(Y)\ni f \mapsto h^*f = f\circ h\in C^\infty(X)$.

\begin{theorem}\label{thm:intertw2}
    Fix $\kappa\in\Rm$ and $R>0$ such that $R^2|\kappa|<1$, and define $\Phi:\Dm_R \to \Dm_1$ as $\Phi(z) := \frac{1-\kappa R^2}{1-\kappa|z|^2}\frac{z}{R}$ as well as $w(z) := \frac{1+\kappa|z|^2}{1-\kappa|z|^2}$. Then we have the following intertwining relation between $\adjmu$ and $(\adjmu)^e$:
    \begin{align}
	\sqrt{\frac{1+\kappa R^2}{1-\kappa R^2}} (\Phi^{-1})^* \frac{1}{w} \adjmu \sqrt{\ss'} \ss^* = (\adjmu)^e.
	\label{eq:intertw2}
    \end{align}
    Passing to the adjoints, 
    \begin{align}
	I_0^e = \frac{(1-\kappa R^2)^{3/2}}{R (1+\kappa R^2)^{1/2}} (\ss^{-1})^* \frac{1}{\sqrt{\ss'}} I_0 w^2 \Phi^*.
	\label{eq:intertw_adj}
    \end{align}
\end{theorem}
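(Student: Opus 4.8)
The plan is to establish \eqref{eq:intertw2} by a direct change-of-variables computation in the integral defining $(\adjmu)^e = (I_0^\sharp)^e(\frac{1}{\mu}\,\cdot)$, using Lemma \ref{lem:crux} as the geometric input, and then to obtain \eqref{eq:intertw_adj} purely formally by transposing. First I would write out $(\adjmu)^e h(\rho' e^{i\omega})$ as the $\theta'$-integral $\int_{\Sm^1} \frac{1}{\cos\alpha_-^e(\rho',\theta')}\, h(\omega + \beta_-^e(\rho',\theta'), \alpha_-^e(\rho',\theta'))\, d\theta'$. The identity $\ss\circ F = F_e\circ \Psi$ from Lemma \ref{lem:crux} says precisely that, along the curve $\theta\mapsto\theta'(\rho,\theta)$ with $\rho' = \Phi(\rho)$ fixed, one has $\beta_-^e(\rho',\theta') = \beta_-(\rho,\theta)$ and $\alpha_-^e(\rho',\theta') = \ss(\alpha_-(\rho,\theta))$. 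Substituting $\theta = \theta(\rho,\theta')$ in the $\theta'$-integral and using the Jacobian \eqref{eq:jactheta} (rearranged to express $d\theta'$ in terms of $d\theta$) turns the integrand into an expression in $\alpha_-(\rho,\theta)$ and $\beta_-(\rho,\theta)$; I would then recognize the result as $(I_0^\sharp)(\text{something})$ evaluated at $\Phi^{-1}(\rho' e^{i\omega}) = \rho e^{i\omega}$, i.e. as $(\Phi^{-1})^*$ applied to $I_0^\sharp$ of a modified function.

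The bookkeeping of the weights is the delicate part. Writing $\lambda = \kappa R^2$, the factors to track are: (i) the Jacobian factor from \eqref{eq:jactheta}, namely $\frac{\partial\theta'}{\partial\theta} = \frac{1-\kappa\rho^2}{1+\kappa\rho^2}\frac{1+\lambda}{1-\lambda}\ss'(\alpha_-)$; (ii) the change from $\frac{1}{\cos\alpha_-^e} = \frac{1}{\cos(\ss(\alpha_-))}$ to $\frac{1}{\cos\alpha_-}$ via the second identity in \eqref{eq:trig}, which produces a factor $\sqrt{\frac{1-\lambda}{1+\lambda}}\sqrt{\ss'(\alpha_-)}$; and (iii) the insertion of $\sqrt{\ss'}$ and the pullback $\ss^*$ on the data side, which is how the $\sqrt{\ss'(\alpha_-)}$ half-powers get absorbed so that only integer powers of $\ss'$ survive inside the integral. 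Multiplying (i) and (ii) gives $\frac{1-\kappa\rho^2}{1+\kappa\rho^2}\sqrt{\frac{1+\lambda}{1-\lambda}}(\ss'(\alpha_-))^{3/2}$, and the half-integer power of $\ss'$ must be matched against the $\sqrt{\ss'}\,\ss^*$ factor already present in the left side of \eqref{eq:intertw2}; what remains is the $\rho$-dependent weight $\frac{1-\kappa\rho^2}{1+\kappa\rho^2} = \frac{1}{w(z)}$ (explaining the $\frac{1}{w}$ in the statement) together with the constant $\sqrt{\frac{1+\lambda}{1-\lambda}} = \sqrt{\frac{1+\kappa R^2}{1-\kappa R^2}}$. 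I expect this careful matching of the three $\sqrt{\ss'}$-type factors, and confirming that the residual $\rho$-weight is exactly $w^{-1}$ rather than some other rational function of $\rho$, to be the main obstacle; it is a routine but error-prone computation, and the cleanest check is to verify it separately on a single angular harmonic $e^{in\omega}$ (where $I_0^\sharp$ and $(\adjmu)^e$ both act by explicit one-dimensional integral operators) before asserting the operator identity.

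Finally, \eqref{eq:intertw_adj} follows by taking $L^2$-adjoints of \eqref{eq:intertw2} with respect to the relevant measures. The adjoint of a pullback $h^*$ by a diffeomorphism is $(h^{-1})^*$ composed with the Jacobian factor relating the two measures, and multiplication operators are self-adjoint; so transposing $\sqrt{\frac{1+\kappa R^2}{1-\kappa R^2}}(\Phi^{-1})^*\frac{1}{w}\adjmu\sqrt{\ss'}\,\ss^* = (\adjmu)^e$ and using $((\adjmu)^e)^* = I_0^e$, $(\adjmu)^* = I_0$ (each adjoint taken in the codomain topology $d\Sigma^2$ and domain topology $dVol_\kappa$, with $\adjmu = \adj\frac1\mu$ so that $(\adjmu)^* = I_0$), one reads off the constant $\frac{(1-\kappa R^2)^{3/2}}{R(1+\kappa R^2)^{1/2}}$ by combining $\sqrt{\frac{1+\kappa R^2}{1-\kappa R^2}}^{-1}$ with the Jacobian of $\Phi$ (which carries $dVol_\kappa$ on $\Dm_R$ to $dVol_0$ on $\Dm_1$, accounting for the extra power of $w$ that upgrades $\frac1w$ to $w^2$) and the measure $d\Sigma^2 = R(1+R^2\kappa)^{-1}d\beta\,d\alpha$. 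Here too the only real work is tracking the Jacobian weights, but this is now mechanical given the explicit formulas for $\Phi$, $\ss$ and the two area forms; no new geometric content is needed beyond Lemma \ref{lem:crux}.
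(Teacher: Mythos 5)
Your proposal follows essentially the same route as the paper: a change of variables $\theta\mapsto\theta'$ in the backprojection integral, with Lemma \ref{lem:crux} supplying both the footpoint intertwining \eqref{eq:geometries} and the Jacobian \eqref{eq:jactheta}, the trigonometric identity \eqref{eq:trig} converting the $\frac{1}{\mu}$ weight, and then a formal passage to adjoints using the volume Jacobian of $\Phi$ (which is exactly what upgrades $\frac{1}{w}$ to $w^2$) together with the normalization of $d\Sigma^2$. The one slip is in your factor (ii): converting $\frac{1}{\cos(\ss(\alpha_-))}$ into $\frac{1}{\cos\alpha_-}$ produces the \emph{reciprocal} of the factor you wrote, i.e. a $(\ss')^{-1/2}$ rather than a $(\ss')^{1/2}$, so that the product of (i) and (ii) is $\frac{1}{w}\sqrt{\tfrac{1+\lambda}{1-\lambda}}\sqrt{\ss'}$ and the single surviving $\sqrt{\ss'}$ is precisely the multiplication operator in $\adjmu\sqrt{\ss'}\,\ss^*$ — with your $(\ss')^{3/2}$ the identity would not close; since you explicitly flag this as the error-prone step and the mechanism you describe is the correct one, this is a bookkeeping correction rather than a gap in the argument.
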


\begin{proof} {\bf Proof of \eqref{eq:intertw2}.} First note that the second identity in \eqref{eq:trig} can be written, in terms of $\mu = \cos\alpha$, as
    \begin{align*}
	\mu\circ \ss = \sqrt{\frac{1-\kappa R^2}{1+\kappa R^2}} \sqrt{\ss'}\ \mu.
    \end{align*}
    With this in mind, we compute that
    \begin{align*}
	\adj \left[\frac{1}{\mu} \sqrt{\ss'} g \right](\rho e^{i\omega}) &= \sqrt{\frac{1+\kappa R^2}{1-\kappa R^2}} \adj \left[ \frac{\ss'}{\mu\circ \ss} g\right] (\rho e^{i\omega})  \\
	&= \sqrt{\frac{1+\kappa R^2}{1-\kappa R^2}} \int_{\Sm^1} \frac{g(\omega+\beta_-(\rho,\theta), \alpha_-(\rho,\theta))}{\cos (\ss(\alpha_-(\rho,\theta)))} \ss'(\alpha_-(\rho,\theta))\ d\theta \\
	&= \sqrt{\frac{1+\kappa R^2}{1-\kappa R^2}} \int_{\Sm^1} \frac{(\ss^{-1})^* g(\omega+\beta_-(\rho,\theta), \ss(\alpha_-(\rho,\theta)))}{\cos (\ss(\alpha_-(\rho,\theta)))} \ss'(\alpha_-(\rho,\theta))\ d\theta
    \end{align*}
    then using \eqref{eq:jactheta}, 
    \begin{align*}
	\frac{1-\kappa\rho^2}{1+\kappa\rho^2} \sqrt{\frac{1+\kappa R^2}{1-\kappa R^2}}\ \adj \left[\frac{1}{\mu} \sqrt{\ss'} g \right](\rho e^{i\omega}) &= \int_{\Sm^1} \frac{(\ss^{-1})^* g(\omega+\beta_-(\rho,\theta), \ss(\alpha_-(\rho,\theta)))}{\cos (\ss(\alpha_-(\rho,\theta)))} \frac{\partial \theta'}{\partial \theta}\ d\theta \\
	&= \int_{\Sm^1} \frac{(\ss^{-1})^* g(\omega+\beta_-(\rho,\theta), \ss(\alpha_-(\rho,\theta)))}{\cos (\ss(\alpha_-(\rho,\theta)))} \ d\theta',
    \end{align*}
    where $\theta$ is implicitly thought of a function of $\theta'$. Now use \eqref{eq:geometries} to obtain 
    \begin{align*}
    \frac{1-\kappa\rho^2}{1+\kappa\rho^2} \sqrt{\frac{1+\kappa R^2}{1-\kappa R^2}}\ \adj \left[\frac{1}{\mu} \sqrt{\ss'} g \right](\rho e^{i\omega}) &= \int_{\Sm^1} \frac{ (\ss^{-1})^* g(\omega+\beta^e_-(\rho',\theta'), \alpha_-^e(\rho',\theta'))}{\cos (\alpha_-^e(\rho',\theta')))} d\theta' \\
    &= (\adj)^e \left[ \frac{1}{\mu} (\ss^{-1})^* g \right] (\rho' e^{i\omega}).
    \end{align*}    
    In conclusion, we obtain at the level of the operators: 
    \begin{align*}
	\sqrt{\frac{1+\kappa R^2}{1-\kappa R^2}} \frac{1}{w} \adjmu \sqrt{\ss'} = \Phi_\kappa^* (\adjmu)^e (\ss^{-1})^*,
    \end{align*}
    which is equivalent to \eqref{eq:intertw2}.

    {\bf Proof of \eqref{eq:intertw_adj}.} To get back to $I_0$, we compute formally
    \begin{align*}
	(g, I_0^e f)_{d\alpha d\beta} &= ((I_0^e)^* g, f)_{\rho' d\rho' d\omega} \\
	&= \sqrt{\frac{1+\kappa R^2}{1-\kappa R^2}} \left( (\Phi^{-1})^* \frac{1}{w} I_0^* \sqrt{\ss'} \ss^* g, f\right)_{\rho' d\rho' d\omega}.
    \end{align*}
    Now writing $\rho' e^{i\omega} = \Phi (\rho e^{i\omega})$, with change of volume 
    \begin{align}
	\rho'\ d\rho'\ d\omega = \frac{(1-\kappa R^2)^2}{R^2} w^{3}(\rho)\ dVol_\kappa (\rho e^{i\omega}),
	\label{eq:changevolume}
    \end{align}
    we obtain
    \begin{align*}
	\left( (\Phi^{-1})^* \frac{1}{w} h, f \right)_{\rho' d\rho' d\omega} = \frac{(1-\kappa R^2)^2}{R^2} \left( h, w^2 \Phi^{*}f \right)_{\dvolk},
    \end{align*}
    and thus, with $h = I_0^* \sqrt{\ss'} \ss^* g$, 
    \begin{align*}
	(g, I_0^e f)_{d\alpha d\beta} &= \sqrt{\frac{1+\kappa R^2}{1-\kappa R^2}} \frac{(1-\kappa R^2)^2}{R^2} \left( I_0^* \sqrt{\ss'} \ss^* g, w^2 \Phi^{*}f \right)_{\dvolk} \\
	&= \sqrt{\frac{1+\kappa R^2}{1-\kappa R^2}} \frac{(1-\kappa R^2)^2}{R^2} \left( \sqrt{\ss'} \ss^* g, I_0 w^2 \Phi^* f  \right)_{\frac{R}{1+\kappa R^2}\ d\beta\ d\alpha} \\
	&= \frac{(1-\kappa R^2)^{3/2}}{R (1+\kappa R^2)^{1/2}} \left( \sqrt{\ss'} \ss^* g, I_0 w^2 \Phi^* f  \right)_{d\beta\ d\alpha} \\
	&= \frac{(1-\kappa R^2)^{3/2}}{R (1+\kappa R^2)^{1/2}} \left( g, (\ss^{-1})^* \frac{1}{\sqrt{\ss'}} I_0 w^2 \Phi^* f  \right)_{d\beta\ d\alpha},
    \end{align*}
    hence the result. 
\end{proof}
 
\subsection{Intertwining operators - proof of Theorem \ref{thm:main1}}

Fix $\kappa\in \Rm$ and $R>0$ such that $R^2 |\kappa|<1$. Define
\begin{align}
    T := \sqrt{\ss'} \ss^* T_e (\ss^{-1})^* \frac{1}{\sqrt{\ss'}}, \qquad \DD := w \Phi^* \DD_e (\Phi^{-1})^* \frac{1}{w}.
    \label{eq:LTkappa}
\end{align}

\begin{proof}[Proof of Theorem \ref{thm:main1} for constant curvature disks]
    The relations appearing in \eqref{eq:rel1} are an immediate consequence of the intertwining relations \eqref{eq:inter2} and \eqref{eq:inter3}, rewritten here as
    \begin{align*}
	\DD_e \circ (\adjmu)^e = (\adjmu)^e \circ (-T_e^2), \qquad I_0^e \circ \DD_e = (-T_e^2) \circ I_0^e,
    \end{align*}
    combined with relations \eqref{eq:intertw2} and \eqref{eq:intertw_adj} and the definition \eqref{eq:LTkappa} of $\DD$ and $T$. 
    
    To prove \eqref{eq:rel2}, insert \eqref{eq:intertw2}, \eqref{eq:intertw_adj} and \eqref{eq:LTkappa} into the relation $\DD_e \left( (I_0^e)^* I_0^e \right)^2 = (4\pi)^2 Id$ to make appear \eqref{eq:rel2}. The proof of Theorem \ref{thm:main1} is complete.      
\end{proof}

We now make the operators $T$ and $\DD$ a bit more explicit, in particular we show that $-T^2$ and $\DD$ are self-adjoint in appropriate spaces. We first compute 
\begin{align*}
    T \circ (\ss^{-1})^* u (\beta,\alpha) &= (\partial_\beta-\partial_\alpha) \left(u (\beta,\ss^{-1}(\alpha))\right) \\
    &= \partial_\beta u(\beta,\ss^{-1}(\alpha)))  - (\ss^{-1})'(\alpha)\partial_\alpha u (\beta,\ss^{-1}(\alpha)) \\
    &= \left[ \left(\partial_\beta - \frac{1}{\ss'}\partial_\alpha\right) u \right] (\beta,\ss^{-1}(\alpha)),
\end{align*}
and thus $\ss^* \circ T\circ (\ss^{-1})^* = \partial_\beta - \frac{1}{\ss'(\alpha)} \partial_\alpha$, which is easily seen to be formally skew-adjoint on $L^2(\partial_+ SM, \ss'(\alpha)\ d\Sigma^2)$. As a result, 
\begin{align}
    T = \sqrt{\ss'} \left(\partial_\beta - \frac{1}{\ss'(\alpha)} \partial_\alpha\right) \frac{1}{\sqrt{\ss'}},
    \label{eq:Tk2}
\end{align}
which is a formally skew-adjoint operator on $L^2(\partial_+ S_{(\kappa)} \Dm_R, d\Sigma^2)$.

On to $\DD$, similar observations show that, since $\DD_e$ is self-adjoint on $L^2(\Dm, \rho\ d\rho\ d\omega)$, we obtain

\begin{lemma}\label{lem:LR_sa}
    The operator $\DD$ defined in \eqref{eq:LTkappa} is formally self-adjoint on $L^2(\Dm_R, w\ \dvolk)$ with $\dvolk = \frac{\rho\ d\rho\ d\omega}{(1+\kappa\rho^2)^2}$.    
\end{lemma}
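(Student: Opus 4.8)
The plan is to transport self-adjointness of $\DD_e$ on $L^2(\Dm_1, \rho\,d\rho\,d\omega)$ through the conjugation $\DD = w\,\Phi^*\,\DD_e\,(\Phi^{-1})^*\,\frac{1}{w}$, tracking carefully how the measure changes under the diffeomorphism $\Phi\colon\Dm_R\to\Dm_1$. The key ingredient is the change-of-volume formula \eqref{eq:changevolume}, which in the form needed here reads $\rho'\,d\rho'\,d\omega = \frac{(1-\kappa R^2)^2}{R^2}\,w^3\,\dvolk$; the constant prefactor is irrelevant for self-adjointness, so the effective statement is that $\Phi^*$ is (up to constant) an isometry from $L^2(\Dm_1,\rho'\,d\rho'\,d\omega)$ onto $L^2(\Dm_R, w^3\,\dvolk)$.

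First I would record that from \eqref{eq:Luv} (equivalently the formally self-adjoint structure of $\DD_e$), for all $u,v\in C^\infty(\Dm_1)$ one has $(\DD_e u, v)_{\rho'd\rho'd\omega} = (u,\DD_e v)_{\rho'd\rho'd\omega}$. Next, given $f,h\in C^\infty(\Dm_R)$, set $u := (\Phi^{-1})^*\frac{1}{w} f$ and $v := (\Phi^{-1})^*\frac{1}{w} h$, both smooth on $\Dm_1$. Then I would compute, using the definition \eqref{eq:LTkappa} of $\DD$ and the change of variables $\rho' e^{i\omega} = \Phi(\rho e^{i\omega})$ together with \eqref{eq:changevolume},
\begin{align*}
    (\DD f, h)_{L^2(\Dm_R, w\,\dvolk)} &= \left( w\,\Phi^*\DD_e u,\ h \right)_{w\,\dvolk} = \left( \Phi^*\DD_e u,\ \tfrac1w h\cdot w^2 \right)_{w\,\dvolk} \\
    &= \tfrac{R^2}{(1-\kappa R^2)^2}\left( \DD_e u,\ v \right)_{\rho'd\rho'd\omega},
\end{align*}
where the last step pushes the pairing forward along $\Phi$ and absorbs the factor $w^3$ via \eqref{eq:changevolume}. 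By symmetry of $\DD_e$ this equals $\frac{R^2}{(1-\kappa R^2)^2}(u,\DD_e v)_{\rho'd\rho'd\omega}$, and running the same computation in reverse yields $(f,\DD h)_{L^2(\Dm_R, w\,\dvolk)}$. Hence $(\DD f,h) = (f,\DD h)$ on the dense domain $C^\infty(\Dm_R)\subset L^2(\Dm_R, w\,\dvolk)$, which is the claimed formal self-adjointness.

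The only genuinely delicate point is the justification that the integration by parts producing symmetry carries no boundary contributions in the curved, rescaled setting — but this is inherited from the Euclidean case, where \eqref{eq:Luv} already notes that $\rho$ and $1-\rho^2$ both vanish at the endpoints of $[0,1]$, and $\Phi$ is a diffeomorphism mapping $\partial\Dm_R$ to $\partial\Dm_1$ with $w$ smooth and non-vanishing on $\overline{\Dm_R}$; thus the degeneracy structure (a factor vanishing at $\rho=R$) is preserved and no new boundary terms appear. I do not expect a real obstacle here, only bookkeeping of the weight powers $w, w^2, w^3$; the substantive content is entirely in the already-established identities \eqref{eq:Luv}, \eqref{eq:LTkappa} and \eqref{eq:changevolume}.
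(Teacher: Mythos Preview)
Your proof is correct and follows essentially the same route as the paper's: both transport the already established formal self-adjointness of $\DD_e$ on $L^2(\Dm_1,\rho'\,d\rho'\,d\omega)$ through the conjugation $\DD = w\,\Phi^*\,\DD_e\,(\Phi^{-1})^*\,\tfrac{1}{w}$ using the change-of-volume identity \eqref{eq:changevolume}, arriving at the symmetric expression $\tfrac{R^2}{(1-\kappa R^2)^2}(\DD_e u,v)_{\rho'd\rho'd\omega}$. Your additional paragraph on boundary terms is a nice clarification but not strictly needed, since the symmetry of $\DD_e$ (already proved via \eqref{eq:Luv}) is invoked as a black box.
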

\begin{proof}
    We compute, using notation $\rho' e^{i\omega} = \Phi (\rho e^{i\omega})$,  
    \begin{align*}
	\int_{\Dm} \DD u (\rho e^{i\omega}) \bar{v} (\rho e^{i\omega}) w \ dVol_\kappa &= \int_\Dm \DD_e \left[(\Phi^{-1})^* \frac{1}{w}u\right] (\rho' e^{i\omega}) \bar{v} (\rho e^{i\omega}) w^2\ dVol_\kappa \\
	&= \int_\Dm \DD_e \left[(\Phi^{-1})^* \frac{1}{w} u\right] (\rho' e^{i\omega}) \left[(\Phi^{-1})^*\frac{1}{w}\bar{v}\right] (\rho' e^{i\omega}) w^3\ dVol_\kappa (\rho e^{i\omega}),
    \end{align*}
    where $\rho' = \frac{1-\kappa R^2}{1-\kappa\rho^2} \frac{\rho}{R}$. Using the change of volume \eqref{eq:changevolume}, we change variable $\rho\to \rho'$ in the last integral and obtain
    \begin{align*}
	\int_{\Dm} \DD u (\rho e^{i\omega}) \bar{v} (\rho e^{i\omega}) w \ dVol_\kappa &= \frac{R^2}{(1-\kappa R^2)^2} \int_\Dm \DD_e \left[(\Phi^{-1})^* \frac{u}{w}\right] (\rho' e^{i\omega}) \left[(\Phi^{-1})^*\frac{\bar{v}}{w}\right] (\rho' e^{i\omega}) \rho'\ d\rho'\ d\omega, 
    \end{align*}
    which is now a symmetric expression of $u$ and $v$ since $\DD_e$ is formally self-adjoint on $L^2(\Dm, |dz|^2)$. The proof is complete.  
\end{proof}

Upon denoting $\psi_{n,k}^e$ and $Z_{n,k}^e$ the functions defined in \eqref{eq:psink} and \eqref{eq:Znk}, from the relations
\begin{align*}
    \DD_e Z_{n,k}^e &= (n+1)^2 Z_{n,k}^e, \qquad n\ge 0, \qquad 0\le k\le n, \\
    (-T_e^2) \psi_{n,k}^e &= (n+1)^2 \psi_{n,k}^e, \qquad n\ge 0, \qquad k\in \Zm,
\end{align*}
we can combine these relations with the definitions \eqref{eq:LTkappa} to deduce the relations
\begin{align}
    \DD Z_{n,k} &= (n+1)^2 Z_{n,k}, \qquad Z_{n,k} := w \Phi^* Z_{n,k}^e, \qquad n\ge 0, \qquad 0\le k\le n, \label{eq:Znk2}\\
    (-T^2) \psi_{n,k} &= (n+1)^2 \psi_{n,k}, \qquad \psi_{n,k} := \sqrt{\ss'} \ss^* \psi^{\F{e}}_{n,k}, \qquad n\ge 0, \qquad k\in \Zm. \label{eq:psink2}
\end{align}
Note also that 
\begin{align}
    \| Z_{n,k}\|_{L^2(\Dm_R, w\ \dvolk)}^2 \stackrel{\eqref{eq:changevolume}}{=} \frac{R^2}{(1-\kappa R^2)^2} \|Z_{n,k}^e\|_{L^2(\Dm)}^2 \stackrel{\eqref{eq:Znknorm}}{=} \frac{R^2}{(1-\kappa R^2)^2} \frac{\pi}{n+1}
    \label{eq:Znknorm2}
\end{align}
and 
\begin{align}
    \|\psi_{n,k}\|_{L^2(\partial_+ S_{(\kappa)}\Dm_R, R(1+\kappa R^2)^{-1} d\beta d\alpha)}^2 = \frac{R}{1+\kappa R^2} \|\psi_{n,k}^e\|^2_{L^2(\partial_+ S\Dm, d\beta d\alpha)} = \frac{R}{1+\kappa R^2} \frac{1}{4}. 
    \label{eq:psinknorm2}
\end{align}

\subsection{Mapping properties of the normal operator $\adjmu I_0$ - Proof of Lemma \ref{lem:intersect}}

With $Z_{n,k}$ defined in \eqref{eq:Znk2}, a function $f\in L^2(\Dm_R, w\ \dvolk)$ decomposes as 
\begin{align*}
    f = \sum_{n\ge 0} \sum_{k=0}^n f_{n,k} \widehat{Z_{n,k}}, \qquad f_{n,k} = (f, \widehat{Z_{n,k}})_{L^2(\Dm_R, w\ \dvolk)} \qquad \|f\|_{L^2(\Dm_R, w\ \dvolk)}^2 = \sum_{n,k} |f_{n,k}|^2.
\end{align*}
In view of the eigenequation \eqref{eq:Znk2}, it may be natural to define the following Sobolev-Zernike scale
\begin{align*}
    \wtH^s(\Dm_R) &= \left\{ f = \sum_{n=0}^\infty\sum_{k=0}^n f_{n,k} \widehat{Z_{n,k}}, \qquad \sum_{n=0}^\infty (n+1)^{2s} \sum_{k=0}^n |f_{n,k}|^2 <\infty \right\} \\
    &= \left\{ f \in L^2(\Dm, w\ \dvolk),\qquad \DD^{s/2}f \in L^2(\Dm_R, w\ \dvolk)\right\}.
\end{align*}
Since $Z_{n,k} = w \Phi^* Z_{n,k}^e$, the following claim is immediate for any $s\in \Rm$
\begin{align}
    f\in \wtH_e^s(\Dm) \qquad \text{if and only if} \qquad  w\ \Phi^* f \in \wtH^s(\Dm_R),
    \label{eq:claim}
\end{align}
where $\wtH_e^s$ denotes the space of the reference case. This allows to easily prove Lemma \ref{lem:intersect} for the case of simple geodesic disks of constant curvature, indeed 
\begin{align*}
    \bigcap_{s\in \Rm} \wtH^s (\Dm_R) = \bigcap_{s\in \Rm} (\Phi^{-1})^* \frac{1}{w} \wtH_e^s (\Dm) = (\Phi^{-1})^* \frac{1}{w} \bigcap_{s\in \Rm} \wtH_e^s (\Dm) \stackrel{(\star)}{=} (\Phi^{-1})^* \frac{1}{w} C^\infty(\Dm) = C^\infty(\Dm_R),
\end{align*}
where the equality $(\star)$ uses that Lemma \ref{lem:intersect} is true for the reference case.
As a result, the conclusion of Lemma \ref{lem:intersect}, and {\it a fortiori} of Corollary \ref{cor:1} holds for simple geodesic disks of constant curvature. 

\section{Mapping properties of $I_0$ - Proof of Theorem \ref{thm:main2}} \label{sec:mappingI0}

We now discuss a natural Sobolev scale on $\partial_+ SM$, where $(M,g)$ is modeled on $(\Dm_R, g_\kappa)$ with $R^2 |\kappa|<1$. Recall that the space $L^2(\partial_+ SM)$ splits into a direct orthogonal sum 
\begin{align*}
    L^2(\partial_+ SM) = L^2_+ \stackrel{\perp}{\oplus} L^2_-, \qquad L^2_{\pm} (\partial_+ SM) = \{u\in L^2(\partial_+ SM),\ u\circ \SS_A = \pm u\},
\end{align*}
where the antipodal scattering relation $\SS_A: \partial_+ SM\to \partial_+ SM$ is defined as
\begin{align*}
    \SS_A(\beta,\alpha) = (\beta+ \pi + 2\ss(\alpha), -\alpha),
\end{align*}
and for the purposes of understanding $I_0$, one may forget $L_-^2$. A Hilbert basis for $L^2_+$ that is adapted to the X-ray transform is $\{\psi_{n,k},\ n\ge 0,\ k\in \Zm\}$ as defined in \eqref{eq:psink}, whose $C^\infty$ span (i.e. expansions with rapid decay) generates the space $C_{\alpha,-,+}^\infty(\partial_+ SM)$ defined in \eqref{eq:Calmp}, as explained in \cite[Proposition 6]{Mishra2019}. Since the operator $-T^2 = -\sqrt{\ss'} \left( \partial_\beta - \frac{1}{\ss'(\alpha)} \partial_\alpha \right)^2 \frac{1}{\sqrt{\ss'}}$ is formally self-adjoint on $L^2_+$ with spectral decomposition as in \eqref{eq:psink2}, we may then define a functional calculus, namely we may define 
\begin{align*}
    f(-T^2) w &:= \sum_{n,k} f( (n+1)^2) w_{n,k} \widehat{\psi_{n,k}}, \qquad w = \sum_{n,k} w_{n,k}\ \widehat{\psi_{n,k}} \in C_{\alpha,-,+}^\infty(\partial_+ SM). 
\end{align*}
\begin{remark}
    Note that $(-T^2)^{1/2}$ is quite different from $T$, as $(-T^2)^{1/2}$ maps $\ker (Id-\SS_A^*)$ into itself, while $T$ maps $\ker(Id -\SS_A^*)$ into $\ker(Id + \SS_A^*)$.
\end{remark}

We can then define Sobolev scales associated with $-T^2$ as follows 
\begin{align*}
    H_{T,+}^s (\partial_+ SM) &= \{ w \in L_+^2 (\partial_+ SM, d\Sigma^2), \qquad (-T^2)^{s/2} w \in L_+^2(\partial_+ SM, d\Sigma^2 ) \} \\
    &= \{ w = \sum_{n,k} w_{n,k} \widehat{\psi_{n,k}}, \qquad \sum_{n,k} (n+1)^s |w_{n,k}|^2 <\infty \}.
\end{align*}

We now move to the proof of Theorem \ref{thm:main2}.

\begin{proof}[Proof of Theorem \ref{thm:main2}] 
    As mentioned in the introduction, there are two key things to prove: (i) the description of the cokernel of $I_0$, and (ii) the smoothing properties of $I_0$. 

    Regarding (i), recall the operator $C_-\colon L^2_+ \to L^2_+$ defined by $C_- := \frac{1}{2} A_-^* H_- A_-$ and introduced in \cite{Monard2015a,Mishra2019}. In \cite{Mishra2019}, it is shown that when $R= 1$ and $|\kappa|<1$, $C_-$ acts diagonally on the $\psi_{n,k}$ basis as follows\footnote{This is initially formulated in the $u'_{p,q}$ basis, followed by a reindexing $(n,k) \mapsto (p,q) = (n-2k,n-k)$ and defining $\psi_{n,k} = \frac{(-1)^n}{4\pi} u'_{n-2k,n-k}$}
    \begin{align*}
	C_- \psi_{n,k} = \frac{i}{2} (\sgn{2(n-k)+1} + \sgn{-(2k+1)}) \psi_{n,k} =  \left\{
	    \begin{array}{cl}
		i \psi_{n,k} & n\ge 0,\quad k<0, \\
		0 & n\ge 0, \quad 0\le k\le n, \\
		- i \psi_{n,k} & n\ge 0,\quad k>n.
	    \end{array}
	\right.
    \end{align*}
    The generalization to general $R$ and $\kappa$ such that $R^2 |\kappa|<1$ is identical and we do not repeat it here. From their diagonal action on $\psi_{n,k}$, the operators $C_-$ and $-T^2$ commute. Moreover, $C_-$ vanishes exactly on the range of $I_0$ while having spectral values in $\{\pm i\}$ on the orthocomplement. In other words, for every $s$, the operator 
    \begin{align*}
	C_-: H_{T,+}^s (\partial_+ SM, d\Sigma^2) \to H_{T,+}^s (\partial_+ SM, d\Sigma^2),
    \end{align*}
    is bounded, skew-adjoint, with operator norm $1$. 

    On to looking at (ii), we will quantify precisely the gain induced by $I_0$ on the Sobolev scales we have defined. In the reference case, Theorem \ref{thm:SVD} implies that 
    \begin{align*}
	I_0^e Z_{n,k}^e = \frac{4\pi}{n+1} \psi_{n,k}^e, \qquad n\ge 0, \qquad 0\le k\le n.
    \end{align*}
    We now plug in \eqref{eq:intertw_adj} and the definition of $Z_{n,k}$ \eqref{eq:Znk2} and $\psi_{n,k}$ \eqref{eq:psink2} into the previous equation to make appear
    \begin{align*}
	I_0 w (Z_{n,k}) = \frac{R (1+\kappa R^2)^{1/2}}{(1-\kappa R^2)^{3/2}} \frac{4\pi}{n+1} \psi_{n,k}, \qquad n\ge 0, \qquad 0\le k\le n,
    \end{align*}
    which in turn becomes 
    \begin{align*}
	I_0 w (\widehat{Z_{n,k}}) &= \frac{\|\psi_{n,k}\|}{\|Z_{n,k}\|} \frac{R (1+\kappa R^2)^{1/2}}{(1-\kappa R^2)^{3/2}} \frac{4\pi}{n+1} \widehat{\psi_{n,k}} = \frac{\sqrt{c_{\kappa,R}}}{\sqrt{n+1}} \widehat{\psi_{n,k}}, 
    \end{align*}
    upon using \eqref{eq:Znknorm2} and \eqref{eq:psinknorm2}. In particular, $I_0$ acts by gaining precisely a $\sqrt{n+1}$ decay from the $\wtH^\bullet$ scale to the $H_{T,+}^\bullet$ scale, and \eqref{eq:sharp} follows. Theorem \ref{thm:main2} is proved.     
\end{proof}

\appendix

\section{Appendix} \label{sec:appendix}

\subsection{Zernike facts}\label{sec:Zernike}

Let us explain how differentiation acts on the Zernike basis. Here we use the convention in \cite{Kazantsev2004}. Specifically, \cite[Theorem 1]{Kazantsev2004} states that 
\begin{align}
    Z_{n,k}(z,\zbar) = \frac{1}{k!} \frac{\partial^k}{\partial z^k} \left[ z^n \left( \frac{1}{z}-\zbar \right)^k \right], \qquad n\ge 0, \qquad 0\le k\le n.
    \label{eq:Zernike}
\end{align}

\begin{lemma} \label{lem:differentiation}
    The following properties holds: 
    \begin{align}
	\partial_z Z_{n,k} &= n Z_{n-1,k} - \partial_z Z_{n-2,k-1}, \label{eq:ppty4} \\
	\partial_{\zbar} Z_{n,k} &= - n Z_{n-1,k-1} - \partial_{\zbar} Z_{n-2,k-1}. \label{eq:ppty5}
    \end{align}
\end{lemma}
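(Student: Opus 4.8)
The plan is to prove the two differentiation identities \eqref{eq:ppty4} and \eqref{eq:ppty5} directly from the explicit generating-type formula \eqref{eq:Zernike}, namely $Z_{n,k}(z,\zbar) = \frac{1}{k!}\partial_z^k\big[z^n(\tfrac1z - \zbar)^k\big]$, by manipulating the polynomial inside the bracket before differentiating. First I would write $P_{n,k}(z,\zbar) := z^n(\tfrac1z - \zbar)^k$, so that $Z_{n,k} = \tfrac{1}{k!}\partial_z^k P_{n,k}$, and record the elementary factorization $P_{n,k} = z^{n-1}(\tfrac1z-\zbar)^{k-1}\cdot z(\tfrac1z-\zbar) = P_{n-1,k-1}\cdot(1 - z\zbar)$, valid for $n\ge1$, $k\ge1$. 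This identity relating $P_{n,k}$ to $P_{n-1,k-1}$ is the algebraic heart of the argument.

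For \eqref{eq:ppty4}, I would compute $\partial_z Z_{n,k} = \tfrac{1}{k!}\partial_z^{k+1}P_{n,k}$ and try to re-express $\partial_z P_{n,k}$ in terms of lower-index $P$'s. Differentiating $P_{n,k}=z^n(\tfrac1z-\zbar)^k$ in $z$ gives $\partial_z P_{n,k} = n z^{n-1}(\tfrac1z-\zbar)^k + z^n\cdot k(\tfrac1z-\zbar)^{k-1}\cdot(-\tfrac1{z^2}) = n\, z^{n-1}(\tfrac1z-\zbar)^k - k\, z^{n-2}(\tfrac1z-\zbar)^{k-1}$, i.e. $\partial_z P_{n,k} = n P_{n-1,k} - k\, z^{-1}P_{n-1,k-1}$; but note $z^{-1}P_{n-1,k-1} = z^{n-2}(\tfrac1z-\zbar)^{k-1} = P_{n-2,k-1}$. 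Hence $\partial_z P_{n,k} = n P_{n-1,k} - k\,P_{n-2,k-1}$. Applying $\tfrac{1}{k!}\partial_z^k$ to both sides and using the definition of $Z$ at the various indices (being careful that $\tfrac{1}{k!}\partial_z^k P_{n-1,k} = Z_{n-1,k}$ and $\tfrac{1}{k!}\partial_z^k P_{n-2,k-1} = \tfrac{1}{k!}\partial_z^k P_{n-2,k-1}$, which after rewriting $\tfrac{1}{k!} = \tfrac{1}{k}\cdot\tfrac{1}{(k-1)!}$ and $\partial_z^k = \partial_z\,\partial_z^{k-1}$ yields $\tfrac{1}{k}\partial_z Z_{n-2,k-1}$) produces, after the index bookkeeping, exactly $\partial_z Z_{n,k} = n Z_{n-1,k} - \partial_z Z_{n-2,k-1}$. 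I would double-check the combinatorial factor: the $k$ from $\partial_z P_{n,k}$ cancels against the $\tfrac1k$ from $\tfrac{1}{k!} = \tfrac1k\tfrac{1}{(k-1)!}$, which is what makes the clean identity hold.

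For \eqref{eq:ppty5}, the same scheme applies but now I differentiate in $\zbar$. Since $\partial_{\zbar}$ commutes with $\partial_z^k$, I have $\partial_{\zbar}Z_{n,k} = \tfrac{1}{k!}\partial_z^k\partial_{\zbar}P_{n,k}$, and $\partial_{\zbar}P_{n,k} = z^n\cdot k(\tfrac1z-\zbar)^{k-1}\cdot(-1) = -k\, z^n(\tfrac1z-\zbar)^{k-1} = -k\, z\cdot P_{n-1,k-1}$. Writing $z\,P_{n-1,k-1} = z^n(\tfrac1z-\zbar)^{k-1}$ and then rewriting this as $P_{n-1,k-1}$ multiplied by $z$... here I would instead note $z^n(\tfrac1z-\zbar)^{k-1} = z^{n}(\tfrac1z-\zbar)^{k-1}$; to match the claimed right side I would split off one more factor, using $z^n(\tfrac1z-\zbar)^{k-1} = z^{n-1}(\tfrac1z-\zbar)^{k-1}\cdot z = P_{n-1,k-1}\cdot z$, and then apply $\tfrac{1}{k!}\partial_z^k = \tfrac1k\tfrac{1}{(k-1)!}\partial_z\partial_z^{k-1}$; using the product rule on $\partial_z(z\cdot(\text{stuff}))$ one gets the $z^{n-1}$-term contributing $Z_{n-1,k-1}$ and the remainder contributing $\partial_{\zbar}Z_{n-2,k-1}$ after re-identification. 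The $-k$ cancels the $\tfrac1k$ as before, leaving $\partial_{\zbar}Z_{n,k} = -nZ_{n-1,k-1} - \partial_{\zbar}Z_{n-2,k-1}$ — though I should recheck whether the first coefficient is $n$ or $(n-1)$; the correct route is probably to apply the Leibniz-type identity $\partial_z^k(z f) = z\,\partial_z^k f + k\,\partial_z^{k-1}f$ to $f = P_{n-1,k-1}$, which after dividing by $k!$ gives $\tfrac{1}{k!}\partial_z^k(zP_{n-1,k-1}) = \tfrac{z}{k}\cdot\tfrac{1}{(k-1)!}\partial_z^k P_{n-1,k-1} + \tfrac{1}{k}\cdot Z_{n-1,k-1}$, and one then recognizes $\tfrac{1}{(k-1)!}\partial_z^{k}P_{n-1,k-1} = \partial_z\big(\tfrac{1}{(k-1)!}\partial_z^{k-1}P_{n-1,k-1}\big) = \partial_z Z_{n-1,k-1}$, and relates that back to $\zbar$-derivatives via the first identity.

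The main obstacle I anticipate is purely bookkeeping: keeping the factorials, the shifts $k\mapsto k-1$, $n\mapsto n-1, n-2$, and the Leibniz correction terms $\partial_z^k(zf) = z\partial_z^k f + k\partial_z^{k-1}f$ straight, and in particular verifying that the numerical coefficient in front of $Z_{n-1,\cdot}$ comes out to be exactly $n$ (this is the place where an off-by-one or a stray factor of $k$ is most likely to creep in). There is also a minor boundary issue — the identities should be read with the convention $Z_{m,j} \equiv 0$ when $j<0$ or $j>m$, so that the cases $k=0$, $k=1$, $n=0,1$ are consistent — which I would state explicitly before starting the computation. Apart from that, no analytic input is needed; this is an identity among polynomials and can be verified by direct differentiation of \eqref{eq:Zernike}.
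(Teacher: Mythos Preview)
Your argument for \eqref{eq:ppty4} is correct and is exactly the paper's: differentiate $P_{n,k}=z^n(z^{-1}-\zbar)^k$ once in $z$ to get $\partial_z P_{n,k}=nP_{n-1,k}-kP_{n-2,k-1}$, then apply $\tfrac{1}{k!}\partial_z^k$ and sort out the factorials.

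For \eqref{eq:ppty5}, however, you take a detour that does not close. After computing $\partial_{\zbar}P_{n,k}=-k\,z^n(z^{-1}-\zbar)^{k-1}$, you write this as $-k\,z\cdot P_{n-1,k-1}$ and invoke the Leibniz rule $\partial_z^k(zf)=z\partial_z^k f+k\partial_z^{k-1}f$. Carrying this through correctly gives
\[
\partial_{\zbar}Z_{n,k}=-z\,\partial_z Z_{n-1,k-1}-k\,Z_{n-1,k-1},
\]
which is \emph{not} the target identity; reducing it to $-nZ_{n-1,k-1}-\partial_{\zbar}Z_{n-2,k-1}$ would require the separate relation $z\,\partial_z Z_{n-1,k-1}=(n-k)Z_{n-1,k-1}+\partial_{\zbar}Z_{n-2,k-1}$, which you have not established. (Your displayed Leibniz computation also has an arithmetic slip: the second term should be $Z_{n-1,k-1}$, not $\tfrac1k Z_{n-1,k-1}$.)

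The fix is a one-line observation you nearly wrote down: $z^n(z^{-1}-\zbar)^{k-1}$ is precisely $P_{n,k-1}$, not merely $z\cdot P_{n-1,k-1}$. Hence $\partial_{\zbar}P_{n,k}=-kP_{n,k-1}$, and applying $\tfrac{1}{k!}\partial_z^k$ yields directly
\[
\partial_{\zbar}Z_{n,k}=-\tfrac{1}{(k-1)!}\partial_z^k P_{n,k-1}=-\partial_z Z_{n,k-1}.
\]
Now \eqref{eq:ppty5} follows immediately by applying the already-proved \eqref{eq:ppty4} with $k$ replaced by $k-1$:
\[
\partial_{\zbar}Z_{n,k}=-\partial_z Z_{n,k-1}=-nZ_{n-1,k-1}+\partial_z Z_{n-2,k-2}=-nZ_{n-1,k-1}-\partial_{\zbar}Z_{n-2,k-1}.
\]
This is exactly the paper's route and avoids the extra identity your Leibniz approach would need.
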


\begin{proof} To prove \eqref{eq:ppty4}, we write
    \begin{align*}
	\partial_z Z_{n,k} &= \frac{1}{k!} \partial_z^{k+1} \left( z^n (z^{-1}-\zbar)^k \right) \\
	&= \frac{1}{k!} \partial_z^k \left( nz^{n-1} (z^{-1}-\zbar)^k \right) - \frac{1}{(k-1)!} \partial_z^k \left( z^{n-2} (z^{-1}-\zbar)^{k-1} \right) \\
	&= n Z_{n-1,k} - \partial_z Z_{n-2,k-1}.
    \end{align*}
    Then the proof of \eqref{eq:ppty5} follows via
    \begin{align*}
	\partial_{\zbar} Z_{n,k} = - \partial_z Z_{n,k-1} &= - n Z_{n-1,k-1} + \partial_{z} Z_{n-2,k-2,} \\
	&= - n Z_{n-1,k-1} - \partial_{\zbar} Z_{n-2,k-1}.
    \end{align*}    
\end{proof}

Applying \eqref{eq:ppty4} until going out of bounds, we obtain
\begin{align}
    \partial_z Z_{n,k} = \sum_{p = 0}^{P_{n,k}} (n-2p) (-1)^p Z_{n-1-2p,k-p},
    \label{eq:diffZnk}
\end{align}
where
\begin{align}
    P_{n,k} := \left\{
	\begin{array}{cc}
	    k & \text{if } k< n-k, \\
	    n-k-1 & \text{if } k\ge n-k.
	\end{array}
	\right. \qquad n>0. \qquad P_{0,0} = 0.
    \label{eq:Pnk}
\end{align}
Similarly, for $k\ge 1$
\begin{align*}
    -\partial_{\zbar} Z_{n,k} = \partial_z Z_{n,k-1} = \sum_{p= 0}^{P_{n,k-1}} (n-2p) (-1)^{p} Z_{n-1-2p,k-1-p}.
\end{align*}

\paragraph{Zernike expansions of $\partial f$ and $\dbar f$.} Turning \eqref{eq:diffZnk} around, given $Z_{n,k}$, the only basis elements $Z_{n',k'}$ such that $(\partial_z Z_{n',k'}, Z_{n,k}) \ne 0$ are $Z_{n+1+2p,k+p}$ and such that 
\begin{align*}
    \partial_z Z_{n+1+2p, k+p} = \dots + (-1)^p (n+1) Z_{n,k} + \dots, \qquad p\ge 0.
\end{align*}
In particular, this means
\begin{align*}
    (\partial_z Z_{n+1+2p, k+p}, Z_{n,k}) = (-1)^p\ \pi, \qquad p\ge 0. 
\end{align*}
Any function $f\in L^2(\Dm)$ can be written as
\begin{align}
    f = \sum_{n=0}^\infty \frac{n+1}{\pi} \sum_{k=0}^n (f,Z_{n,k}) Z_{n,k}.
    \label{eq:fL2}
\end{align}
Then 
\begin{align*}
    \partial_z f = \sum_{n=0}^\infty \frac{n+1}{\pi} \sum_{k=0}^n (\partial_z f,Z_{n,k}) Z_{n,k},
\end{align*}
where 
\begin{align*}
    (\partial_z f, Z_{n,k}) &= \sum_{n'= 0}^\infty \sum_{k'=0}^{n'} \frac{n'+1}{\pi} (f, Z_{n',k'}) (\partial_z Z_{n',k'}, Z_{n,k})  \\
    &= \sum_{p=0}^\infty (n+2+2p) (-1)^p (f, Z_{n+1+2p, k+p}).
\end{align*}
Similar considerations for $\dbar$ yield that 
\begin{align*}
    (\dbar Z_{n+1+2p,k+1+p}, Z_{n,k}) = (-1)^{p+1} \pi, \qquad p\ge 0,
\end{align*}
and this implies the decomposition
\begin{align*}
    \dbar f = \sum_{n=0} \frac{n+1}{\pi} \sum_{k=0}^n (\dbar f, Z_{n,k}) Z_{n,k}, 
\end{align*}
where 
\begin{align*}
    (\dbar f, Z_{n,k}) &= \sum_{n',k'} \frac{n'+1}{\pi} (f, Z_{n',k'}) (\dbar Z_{n',k'}, Z_{n,k}) \\ 
    &= \sum_{p=0}^\infty (-1)^{p+1} (n+2+2p) (f, Z_{n+1+2p, k+1+p}). 
\end{align*}

\subsection{Proofs of missing lemmas}\label{sec:missing}

\begin{proof}[Proof of Lemma \ref{lem:continuous}] Since our definition of $Z_{n,k}$ agrees with \cite{Kazantsev2004}, we have the representation (see \cite[Eq. (4.2)]{Kazantsev2004})
    \begin{align*}
	Z_{n,k}(\rho e^{i\omega}) = (-1)^k e^{i(n-2k)\omega} \rho^{n-2k} P_k^{(0,|n-2k|)}(2\rho^2-1),
    \end{align*} 
    where $P_k^{(a,b)}$ refers to Jacobi polynomials. From \cite[Theorem 7.2 p. 163]{Szegoe1938}, we deduce that $\sup_{\Dm} |Z_{n,k}| = |Z_{n,k}(1)| = 1$. Combining this with \eqref{eq:Znknorm}, we obtain $\sup_\Dm |\widehat{Z_{n,k}}| = \frac{1}{\sqrt{\pi}} (n+1)^{1/2}$, and thus 
	\begin{align*}
	    \sum_{n,k} |f_{n,k}| |\widehat{Z_{n,k}}| &\le \frac{1}{\sqrt{\pi}} \sum_{n,k} |f_{n,k}| (n+1)^{1/2} \\
	    & \le \frac{1}{\sqrt{\pi}} \sum_{n,k} (n+1)^\alpha |f_{n,k}| (n+1)^{\frac{1}{2}-\alpha} \\
	    & \le \frac{1}{\sqrt{\pi}} \left( \sum_{n,k} (n+1)^{2\alpha} |f_{n,k}|^2 \right) \sum_{n,k} (n+1)^{1-2\alpha} \\
	    & \le \frac{1}{\sqrt{\pi}} \left( \sum_{n,k} (n+1)^{2\alpha} |f_{n,k}|^2 \right) \sum_{n} (n+1)^{2-2\alpha},
	\end{align*}
	where the last sum is finite whenever $\alpha > 3/2$. 
    \end{proof}

        \begin{proof}[Proof of Lemma \ref{lem:tame}]
	    We prove the statement for $\partial$, the estimates for $\dbar$ are similar. Let us recall the equation
	\begin{align*}
	    (\partial f, Z_{n,k}) = \sum_{p=0}^\infty (n+2+2p) (-1)^p (f, Z_{n+1+2p, k+p}). 
	\end{align*}
	Translating into normalized Zernike, this implies the relation
	\begin{align*}
	    (\partial f)_{n,k} = \sqrt{n+1} \sum_{p=0}^\infty (-1)^p (n+2+2p)^{\frac{1}{2}} f_{n+1+2p,k+p}. 
	\end{align*}
	In particular, we write
	\begin{align*}
	    |(\partial f)_{n,k}|^2 &\le (n+1) \left( \sum_{p=0}^\infty (n+2+2p)^{1/2} |f_{n+1+2p,k+p}| \right)^2 \\
	    &\le (n+1) \left(\sum_{p=0}^\infty (n+2+2p)^{1-2\beta}\right) \sum_{p=0}^\infty (n+2+2p)^{2\beta} |f_{n+1+2p,k+p}|^2 \\
	    &\le (n+1) \zeta(2\beta-1, n+1) \sum_{p=0}^\infty (n+2+2p)^{2\beta} |f_{n+1+2p,k+p}|^2,
	\end{align*}
	where $\zeta(s,q) := \sum_{p=0}^\infty (q+p)^{-s}$ is the Hurwitz Zeta function, convergent for $s>1$ so the estimate above holds for all $\beta >1$. Moreover, with the obvious crude estimate $\zeta(s,q) \le \zeta(s)$, we write the first estimate 
	\begin{align*}
	    |(\partial f)_{n,k}|^2 &\le \zeta(2\beta-1) (n+1) \sum_{p=0}^\infty (n+2+2p)^{2\beta} |f_{n+1+2p,k+p}|^2
	\end{align*}
	We then compute
	\begin{align*}
	    \|\partial f\|^2_{\wtH^{\alpha-\ell}} &= \sum_{n,k} (n+1)^{2(\alpha-\ell)} |(\partial f)_{n,k}|^2 \\
	    &\le \zeta(2\beta-1) \sum_{n,k,p} (n+1)^{2(\alpha-\ell)+1} (n+2+2p)^{2\beta} |f_{n+1+2p,k+p}|^2 \\
	    &\le \zeta(2\beta-1) \sum_{n',k'} |f_{n',k'}|^2 \sum_{n,k,p} (n+1)^{2(\alpha-\ell)+1} (n+2+2p)^{2\beta}
	\end{align*}
	where the latter sum holds over the $n\ge 0$, $0\le k\le n$ and $p\ge 0$ such that $n+1+2p = n'$ and $k+p=k'$. At fixed $n',k'$, given $p\ge 0$, $n,k$ are determined. Moreover the two constraints impose $0\le p\le P_{n',k'}$ as defined in \eqref{eq:Pnk}. We thus arrive at
	\begin{align*}
	    \|\partial f\|^2_{\wtH^{\alpha-\ell}} &\le \zeta(2\beta-1)  \sum_{n',k'} |f_{n',k'}|^2 \sum_{p=0}^{P_{n',k'}} (n'-2p)^{2(\alpha-\ell)+1} (n'+1)^{2\beta} \\
	    &\le \zeta(2\beta-1) \sum_{n',k'} |f_{n',k'}|^2 (n'+1)^{2(\alpha-\ell) + 2 + 2\beta},  
	\end{align*}  
	upon bounding crudely $\sum_{p=0}^{P_{n',k'}} (n'-2p)^{2(\alpha-\ell)+1} \le (n'+1)^{2(\alpha-\ell)+2}$. The last right-hand side is then controlled by $\|f\|^2_{\wtH^\alpha}$ if we choose $\ell = \beta + 1$. Since $\beta$ can be chosen as $1+\varepsilon$ for any $\varepsilon>0$, the result follows. 
    \end{proof}

\subsection{A functional-analytic proof of \eqref{eq:Znknorm}}

The space $H^1_0 (\Dm) \F{=H_0^1(\Dm,\Cm)}$ can be endowed with three equivalent norms
\begin{align*}
    \|\partial_x u\|_{\Dm}^2 + \|\partial_y u\|_{\Dm}^2 = 4 \|\partial u\|_\Dm^2 = 4 \|\dbar u\|_\Dm^2, \qquad u\in H^1_0(\Dm),  
\end{align*} 
\F{where we denote $(u,v)_{\Dm} := \int_{\Dm} u \bar{v}$ and $\|u\|_{\Dm}^2 := (u,u)_{\Dm}$. } Using Riesz representation on the second norm, any linear form on $H^1_0(\Dm)$ can be uniquely written as $v\mapsto (\partial f, \partial v)_\Dm$ for some $f\in H^1_0(\Dm)$, or upon setting $u = \partial f\in (L^2(\ker \dbar))^\perp$, any linear form on $H^1_0(\Dm)$ can be uniquely written as $v\mapsto (u, \partial v)_\Dm$ for some $u\in (L^2(\ker \dbar))^\perp$. Now given $u\in L^2(\Dm)$, the mapping $v\mapsto -(u, \dbar v)_\Dm$ is a linear form on $H^1_0(\Dm)$ and as such, there exists a unique $Bu \in (L^2(\ker \dbar))^\perp$ such that
\begin{align}
    (Bu, \partial v) = - (u, \dbar v), \qquad \forall\ v\in H^1_0(\Dm),
    \label{eq:propB}
\end{align}
with the estimate $\|Bu\|_\Dm \le \|u\|_\Dm$. We call $B$ the\footnote{or rather, a realization of the Beurling transform. For bounded domains, other boundary conditions can be prescribed in, e.g. \cite{Astala2009}. Other Beurling transforms defined in terms of ladder operators on fiber-harmonic decompositions on the unit tangent bundle appear in, e.g., \cite{Paternain2015}.} {\bf Beurling transform}, $B\colon L^2(\Dm) \to L^2(\Dm)$ with norm at most $1$. If $u$ is smooth enough, then $Bu$ is such that $-\dbar (Bu) = \partial u$. Now with the property that $\dbar Z_{n,k+1} = -\partial Z_{n,k}$ and the fact that $Z_{n,k+1} \perp \langle Z_{p,0},\ p\ge 0\rangle$, this precisely means that $Z_{n,k+1} = BZ_{n,k}$ for every $0\le k\le n-1$. 

\begin{proof}[Proof of \eqref{eq:Znknorm}] Since $Z_{n,0} = z^n$ and $Z_{n,n} = (-1)^n \zbar^n$, the proof that 
    \begin{align*}
	\|Z_{n,0}\|^2 = \|Z_{n,n}\|^2 = \frac{\pi}{n+1}
    \end{align*}
    is a straightforward computation. In addition, since the Beurling transform has norm not exceeding $1$, and with $Z_{n,k} = B^k Z_{n,0}$ for all $0\le k\le n$, we deduce that 
    \begin{align*}
	\frac{\pi}{n+1} = \|Z_{n,n}\|^2 \le \|Z_{n,n-1}\|^2 \le \dots\le \|Z_{n,1}\|^2 \le \|Z_{n,0}\|^2 = \frac{\pi}{n+1},
    \end{align*}
    hence all these norms equal $\frac{\pi}{n+1}$.     
\end{proof}

\section*{Acknowledgements}

\F{The author acknowledges fruitful discussions with Bill Lionheart, who was separately aware of the existence of the operator $\DD$ in \eqref{eq:DD} at the time of redaction of this article. Helpful comments from Gabriel Paternain, Todd Quinto and the anonymous referees are also gratefully acknowledged.}

This material is based upon work supported by the National Science Foundation under grants DMS-1814104 and DMS-1440140, while the author was in residence at the Mathematical Sciences Research Institute in Berkeley, California, during the Fall 2019 Semester.

\bibliographystyle{siam}
\bibliography{../../../000-TexTouch/bibliography/bibliography}

\begin{thebibliography}{10}

\bibitem{Agranovsky2019a}
{\sc M.~Agranovsky and B.~Rubin}, {\em Non-geodesic spherical funk transforms
  with one and two centers}, arXiv preprint arXiv:1904.11457,  (2019).

\bibitem{Agranovsky2019}
\leavevmode\vrule height 2pt depth -1.6pt width 23pt, {\em On two families of
  funk-type transforms}, arXiv preprint arXiv:1908.06794,  (2019).

\bibitem{Assylbekov2018}
{\sc Y.~M. Assylbekov and P.~Stefanov}, {\em A sharp stability estimate for the
  geodesic ray transform},  (2018).
\newblock arXiv:1806.00707.

\bibitem{Astala2009}
{\sc K.~Astala, T.~Iwaniec, and G.~Martin}, {\em Elliptic Partial Differential
  Equations and Quasiconformal Mappings in the Plane}, Princeton University
  Press, 2009.

\bibitem{Bal2005}
{\sc G.~Bal}, {\em Ray transforms in hyperbolic geometry}, J. Math. Pures
  Appl., 84 (2005), pp.~1362--1392.

\bibitem{Beltukov2009}
{\sc A.~Beltukov and D.~Feldman}, {\em Identities among euclidean sonar and
  radon transforms}, Advances in Applied Mathematics, 42 (2009), pp.~23--41.

\bibitem{Boman2018}
{\sc J.~Boman and V.~Sharafutdinov}, {\em Stability estimates in tensor
  tomography}, Inverse Problems \& Imaging, 12 (2018), pp.~1245--1262.

\bibitem{Cormack1964}
{\sc A.~M. Cormack}, {\em Representation of a function by its line integrals,
  with some radiological applications. ii}, Journal of Applied Physics, 35
  (1964), pp.~2908--2913.

\bibitem{Epstein2013}
{\sc C.~L. Epstein and R.~Mazzeo}, {\em Degenerate diffusion operators arising
  in population biology}, Princeton University Press, 2013.

\bibitem{Gonzalez2006}
{\sc F.~B. Gonzalez and T.~Kakehi}, {\em Invariant differential operators and
  the range of the matrix radon transform}, Journal of Functional Analysis, 241
  (2006), pp.~232--267.

\bibitem{Grunbaum1983}
{\sc F.~A. Grunbaum}, {\em Differential operators commuting with convolution
  integral operators}, Journal of Mathematical Analysis and Applications, 91
  (1983), pp.~80--93.

\bibitem{Guillarmou2017}
{\sc C.~Guillarmou and F.~Monard}, {\em Reconstruction formulas for x-ray
  transforms in negative curvature}, Annales Inst. Fourier, 67 (2017),
  pp.~1353--1392.
\newblock 

\bibitem{Helgason2010}
{\sc S.~Helgason}, {\em Integral geometry and Radon transforms}, Springer
  Science \& Business Media, 2010.

\bibitem{Holman2015}
{\sc S.~Holman and G.~Uhlmann}, {\em On the microlocal analysis of the geodesic
  x-ray transform with conjugate points}, Journal of Differential Geometry, 3
  (2018), pp.~459--494.

\bibitem{Ilmavirta2019}
{\sc J.~Ilmavirta and F.~Monard}, {\em 4 integral geometry on manifolds with
  boundary and applications}, The Radon Transform: The First 100 Years and
  Beyond, 22 (2019), p.~43.

\bibitem{Johnstone1990}
{\sc I.~M. Johnstone and B.~W. Silverman}, {\em Speed of estimation in positron
  emission tomography and related inverse problems}, The Annals of Statistics,
  (1990), pp.~251--280.

\bibitem{Kakehi1999}
{\sc T.~Kakehi}, {\em Integral geometry on grassmann manifolds and calculus of
  invariant differential operators}, Journal of functional analysis, 168
  (1999), pp.~1--45.

\bibitem{Kazantsev2004}
{\sc S.~G. Kazantsev and A.~A. Bukhgeim}, {\em Singular value decomposition for
  the 2d fan-beam radon transform of tensor fields}, Journal of Inverse and
  Ill-posed Problems jiip, 12 (2004), pp.~245--278.

\bibitem{Krishnan2010}
{\sc V.~Krishnan}, {\em On the inversion formulas of {P}estov and {U}hlmann for
  the geodesic ray transform}, J. Inv. Ill-Posed Problems, 18 (2010),
  pp.~401--408.

\bibitem{Kumar2010}
{\sc A.~Kumar and S.~K. Ray}, {\em Mixed norm estimate for radon transform on
  weighted l p spaces}, Proceedings-Mathematical Sciences, 120 (2010),
  pp.~441--456.

\bibitem{Louis1984}
{\sc A.~K. Louis}, {\em Orthogonal function series expansions and the null
  space of the radon transform}, SIAM journal on mathematical analysis, 15
  (1984), pp.~621--633.

\bibitem{Maass1991}
{\sc P.~Maass}, {\em Singular value decompositions for radon transforms}, in
  Mathematical Methods in Tomography, Springer, 1991, pp.~6--14.

\bibitem{Maass1992}
\leavevmode\vrule height 2pt depth -1.6pt width 23pt, {\em The interior radon
  transform}, SIAM Journal on Applied Mathematics, 52 (1992), pp.~710--724.

\bibitem{Mazzeo1991}
{\sc R.~Mazzeo}, {\em Elliptic theory of differential edge operators i},
  Communications in Partial Differential Equations, 16 (1991), pp.~1615--1664.

\bibitem{Mishra2019}
{\sc R.~K. Mishra and F.~Monard}, {\em Range characterizations and singular
  value decomposition of the geodesic x-ray transform on disks of constant
  curvature}, preprint,  (2019).
\newblock arXiv:1905.00860.

\bibitem{Monard2013}
{\sc F.~Monard}, {\em Numerical implementation of two-dimensional geodesic
  {X}-ray transforms and their inversion.}, SIAM J. Imaging Sciences, 7 (2014),
  pp.~1335--1357.
\newblock 

\bibitem{Monard2015a}
\leavevmode\vrule height 2pt depth -1.6pt width 23pt, {\em Efficient tensor
  tomography in fan-beam coordinates}, Inverse Probl. Imaging, 10 (2016),
  pp.~433--459.
\newblock 

\bibitem{Monard2015}
\leavevmode\vrule height 2pt depth -1.6pt width 23pt, {\em Inversion of the
  attenuated geodesic {X}-ray transform over functions and vector fields on
  simple surfaces}, SIAM J. Math. Anal., 48 (2016), pp.~1155--1177.
\newblock 

\bibitem{Monard2017}
{\sc F.~Monard, R.~Nickl, and G.~P. Paternain}, {\em Efficient nonparametric
  {B}ayesian inference for {X}-ray transforms}, The Annals of Statistics, 47
  (2019), pp.~1113--1147.
\newblock 

\bibitem{Monard2013b}
{\sc F.~Monard, P.~Stefanov, and G.~Uhlmann}, {\em {T}he geodesic {X}-ray
  transform on {R}iemannian surfaces with conjugate points}, Communications in
  Mathematical Physics, 337 (2015), pp.~1491--1513.
\newblock 

\bibitem{Mukhometov1975}
{\sc R.~Mukhometov}, {\em Inverse kinematic problem of seismic on the plane},
  Math. Problems of Geophysics, Akad. Nauk. SSSR, Sibirsk. Otdel., Vychisl.
  Tsentr, Novosibirsk, 6 (1975), pp.~243--252.

\bibitem{Natterer2001}
{\sc F.~Natterer}, {\em The Mathematics of Computerized Tomography}, SIAM,
  2001.

\bibitem{Palamodov2000}
{\sc V.~Palamodov}, {\em Reconstruction from limited data of arc means},
  Journal of Fourier Analysis and Applications, 6 (2000), p.~25.

\bibitem{Paternain2019}
{\sc G.~Paternain and M.~Salo}, {\em A sharp stability estimate for tensor
  tomography in non-positive curvature}, in preparation,  (2019).
\newblock arXiv:2001.04334.

\bibitem{Paternain2015}
{\sc G.~P. Paternain, M.~Salo, and G.~Uhlmann}, {\em Invariant distributions,
  beurling transforms and tensor tomography in higher dimensions},
  Mathematische Annalen, 363 (2015), pp.~305--362.

\bibitem{Pestov2004}
{\sc L.~Pestov and G.~Uhlmann}, {\em On the characterization of the range and
  inversion formulas for the geodesic {X}-ray transform}, International Math.
  Research Notices, 80 (2004), pp.~4331--4347.

\bibitem{Rubin2013}
{\sc B.~Rubin}, {\em Semyanistyi fractional integrals and radon transforms},
  Contemp. Math, 598 (2013), pp.~221--237.

\bibitem{Sharafutdinov2016}
{\sc V.~A. Sharafutdinov}, {\em The reshetnyak formula and natterer stability
  estimates in tensor tomography}, Inverse Problems, 33 (2016), p.~025002.

\bibitem{Stefanov2012a}
{\sc P.~Stefanov and G.~Uhlmann}, {\em The geodesic {X}-ray transform with fold
  caustics}, Analysis and PDE, 5 (2012), pp.~219--260.

\bibitem{Szegoe1938}
{\sc G.~Szeg\"o}, {\em Orthogonal Polynomials}, vol.~XXIII, Amer. Math. Soc.
  Coll. Publ., 1938.

\bibitem{Zernike1934}
{\sc F.~Zernike}, {\em Beugungstheorie des schneidenver-fahrens und seiner
  verbesserten form, der phasenkontrastmethode}, Physica, 1 (1934),
  pp.~689--704.

\end{thebibliography}

\end{document}